\documentclass[10pt,a4paper]{article}
\usepackage{amsthm,amsmath,amssymb}
\usepackage{graphicx}
\usepackage{siunitx}
\usepackage{xcolor}
\usepackage{subcaption}
\usepackage{hyperref}
\usepackage{tikz,graphics,fullpage,float}
\usepackage{mathtools}
\usepackage{aligned-overset}
\usepackage{enumitem}
\usepackage{booktabs}
\usepackage{url}
\usepackage[T1]{fontenc}
\usepackage[linesnumbered,ruled]{algorithm2e}

\usepackage[most]{tcolorbox}

\tcbset{
	colback=gray!5!white,  
	colframe=blue!15!white, 
	fonttitle=\bfseries,   
	coltitle=black,        
	boxrule=0.8pt,         
	arc=4pt,               
	left=6pt, right=6pt, top=6pt, bottom=6pt,
	enhanced, breakable  
}

\usetikzlibrary{calc, positioning, fit, shapes.misc}
\usetikzlibrary{topaths,calc}

\usepackage{booktabs}

\usepackage{graphicx}

\let\emph\relax 
\DeclareTextFontCommand{\emph}{\color{blue}\bfseries\em}

\usepackage[noabbrev,capitalize]{cleveref}
\crefname{equation}{}{}

\newtheorem{theorem}{Theorem}[section]
\newtheorem{proposition}[theorem]{Proposition}
\newtheorem{lemma}[theorem]{Lemma}
\newtheorem{corollary}[theorem]{Corollary}

\theoremstyle{definition}
\newtheorem{definition}[theorem]{Definition}
\newtheorem{problem}[theorem]{Problem}

\newtheorem{claim}[theorem]{Claim}
\newtheorem{remark}[theorem]{Remark}

\newcommand{\FF}{\mathbb{F}}

\newcommand{\PP}{\mathbb{P}}

\newcommand{\cB}{\mathcal{B}}
\newcommand{\cC}{\mathcal{C}}

\newcommand{\cF}{\mathcal{F}}

\newcommand{\cH}{\mathcal{H}}

\newcommand{\cO}{\mathcal{O}}

\newcommand{\cT}{\mathcal{T}}

\newcommand{\floor}[1]{\left\lfloor #1 \right\rfloor}

\newcommand{\wt}{\widetilde}

\makeatletter
\newcommand*\bigcdot{\mathpalette\bigcdot@{.5}}
\newcommand*\bigcdot@[2]{\mathbin{\vcenter{\hbox{\scalebox{#2}{$\m@th#1\bullet$}}}}}
\makeatother

\DeclareMathOperator{\ex}{ex}

\DeclareMathOperator{\ar}{ar}

\DeclareMathOperator{\STS}{STS}
\DeclareMathOperator{\PSTS}{PSTS}
\DeclareMathOperator{\MPSTS}{MPSTS}
\DeclareMathOperator{\PG}{PG}
\begin{document}

\title{Anti-Ramsey numbers for cancellative configurations in $p$-graphs}

\author{
    Cheng Chi
    \thanks{School of Mathematical Sciences, Shanghai Jiao Tong University, 800 Dongchuan Road, Shanghai 200240, China.
        Email: chengchi@sjtu.edu.cn.
        Supported by National Key R\&D Program of China under Grant No. 2022YFA1006400 and National Natural Science Foundation of China No. 12571376.
    }\qquad
    Long-Tu Yuan
    \thanks{School of Mathematical Sciences and Shanghai Key Laboratory of PMMP, East China Normal University, 500 Dongchuan Road, Shanghai 200240, P.R.  China.
        Email: ltyuan@math.ecnu.edu.cn. Supported in part by National Natural Science Foundation of China grant 12271169 and Science and Technology Commission
        of Shanghai Municipality (No. 22DZ2229014).
    }
}
\date{}
\maketitle

\begin{abstract}
    We study edge-colorings of the complete $p$-graph on $n$ vertices that contain no three edges $A,B,C$ of distinct colors such that the symmetric difference of $A$ and $B$ is contained in $C$.
    For $p\ge3$ and $n\ge p+1$, we show that every such coloring contains at most $1+\floor{n/p}$ colors and characterize the extremal colorings, generalizing a theorem of Erd\H{o}s, Simonovits and S\'os. 
    When $p=3$, the condition $A\triangle B\subseteq C$ implies $|A\triangle B|=2$, and the three edges necessarily form a copy of $F_4\coloneqq\{abc,abd,bcd\}$ or $F_5\coloneqq\{abc,abd,cde\}$.
    For $n\ge5$, we show that every rainbow $F_5$-free edge-coloring is rainbow cancellative.
    For rainbow $F_4$-free colorings, we construct colorings with $m(n)+1$ colors for all $n\ge4$, where $m(n)$ is the size of a maximum partial Steiner triple system of order $n$ and satisfies $m(n)=n^2/6+O(n)$, improving the linear lower bound by Budden and Stiles. 
    Moreover, for $n=2^s-1$, we obtain $\ar(n,F_4)\ge m(n)+n^2/42+o(n^2)=4n^2/21+o(n^2)$ via a construction based on independent sets in the Grassmann graph.
    We also prove that $\ar(n,F_4)\le (5n^2-8n)/21$ for $n\ge4$,     improving the quadratic coefficient in the upper bound of Budden and Stiles from $1/4$ to $5/21$.
\end{abstract}
\section{Introduction}\label{subsec:intro}

Anti-Ramsey theory asks for the maximum number of colors in an edge-coloring of a complete (hyper)graph that avoids a rainbow copy of a prescribed configuration.
Since the work of Erd\H{o}s, Simonovits and S\'os \cite{erdos1975}, it has been understood that anti-Ramsey problems form a natural counterpart to classical Ramsey theory and are closely related to Tur\'an-type extremal questions.
For graphs this interaction has been studied extensively, but for uniform hypergraphs comparatively fewer exact results are known, especially for small local configurations defined by simple relations among only a few edges.
For two sets $A$ and $B$, write $A\triangle B$ for their symmetric difference.
The cancellative relation $A\triangle B\subseteq C$ is a basic example of this type: it is easy to state and arises naturally in extremal set theory.
The aim of this paper is to study the anti-Ramsey behavior of this pattern.

A $p$-uniform hypergraph (or $p$-graph for brevity) is called {\it cancellative} if it contains no distinct edges $A,B,C$ satisfying $A\triangle B\subseteq C$.
Cancellative $p$-graphs have been studied extensively, and many works concern their extremal behavior, particularly the maximum number of edges in a cancellative hypergraph \cite{bollobas1974,keevash2004,pikhurko2008}.
Thus cancellativity already lies naturally in the scope of hypergraph Tur\'an theory.
Given a family of $p$-graphs $\cH$ and a $p$-graph $G$, we say that $G$ is $\cH$-free if it contains no copy of any $H\in\cH$.
The Tur\'an problem asks for the maximum number of edges in an $n$-vertex $\cH$-free $p$-graph; this maximum is denoted by $\ex(n,\cH)$.
When $p=2$, a graph is cancellative if and only if it is triangle-free, and Mantel's theorem \cite{mantel1907} determines its Tur\'an number.

Since ordinary cancellativity is a classical extremal notion, it is natural to ask for its rainbow analogue.
An {\it edge-colored $p$-graph} $H$ is a $p$-graph together with a surjection $\psi\colon E(H)\to C$, where the elements of $C$ are called {\it colors}.
We write $c(H)$ for the number of colors used in $H$, namely $c(H)=|C|$. For an edge $f\in E(H)$, let $c_H(f)$ denote the color assigned to $f$ (we omit the subscript when the context is clear).
For a sub-$p$-graph $H'\subseteq H$, we say that $H'$ is {\it rainbow} if all of its edges receive pairwise distinct colors under $\psi$. For a family of $p$-graphs $\cF$, we say that $H$ is {\it rainbow $\cF$-free} if it contains no rainbow copy of any $F\in\cF$.

Let $\cF^{(p)}$ denote the family of $p$-graphs consisting of three edges $A,B,C$ such that $A\triangle B\subseteq C$, and let $K_n^{(p)}$ be the complete $p$-graph on $n$ vertices.
For a family of $p$-graphs $\cF$, define $\ar(n,\cF)$ to be the maximum number of colors in a rainbow $\cF$-free edge-coloring of $K_n^{(p)}$. This is the {\it anti-Ramsey number} of $\cF$.
In particular, we call an edge-colored $K_n^{(p)}$ {\it rainbow cancellative} if it is rainbow $\cF^{(p)}$-free.
In this language, our goal is to determine how many colors $K_n^{(p)}$ can support before a rainbow cancellative configuration becomes unavoidable.

The graph case provides the natural benchmark.
When $p=2$, the condition $A\triangle B\subseteq C$ means exactly that $A,B,C$ are the three edges of a triangle.
Hence rainbow cancellative colorings of $K_n$ are precisely edge-colorings with no rainbow triangle, usually called Gallai colorings~\cite{gallai}. The corresponding anti-Ramsey number was determined by Erd\H{o}s, Simonovits and S\'os \cite{erdos1975}.

\begin{theorem}[Erd\H{o}s, Simonovits and S\'os \cite{erdos1975}]\label{thm:ess}
    For every $n\ge3$, $\ar(n,\cF^{(2)})=n-1$.
\end{theorem}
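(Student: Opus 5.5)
We prove both inequalities $\ar(n,\cF^{(2)})\ge n-1$ and $\ar(n,\cF^{(2)})\le n-1$; recall that for $p=2$ a rainbow member of $\cF^{(2)}$ is exactly a rainbow triangle.

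\textbf{Lower bound.} Order the vertices $v_1,\dots,v_n$ and give the edge $v_iv_j$ with $i<j$ the color $i$. This uses the $n-1$ colors $1,\dots,n-1$. Take any triangle $v_iv_jv_k$ with $i<j<k$. Its edges $v_iv_j$ and $v_iv_k$ both receive color $i$, so no triangle is rainbow. Hence $\ar(n,\cF^{(2)})\ge n-1$.

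\textbf{Upper bound.} We show by induction on $n$ that a coloring of $K_n$ with no rainbow triangle uses at most $n-1$ colors. The base cases $n\le 3$ are immediate, since for $n=3$ the single triangle is not rainbow and so at most $2$ colors appear. The key step is to find a vertex $v$ whose incident edges carry at most one color that does not already appear on $K_n-v$. Then $c(K_n)\le c(K_n-v)+1\le (n-2)+1=n-1$ by induction. The claim $c(K_n-v)\le n-2$ is still valid when $n-1=2$ if we read it directly.

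To find such a $v$, we use the standard Gallai-type argument. Suppose that every vertex $v$ sees at least two colors that are \emph{unique to $v$}, meaning they appear on edges at $v$ but nowhere in $K_n-v$. A color unique to $v$ appears only on edges through $v$, so it forms a star centered at $v$. Pick $v$ and two such colors, $\alpha$ on $vx$ and $\beta$ on $vy$. The triangle $vxy$ forces $c(xy)\in\{\alpha,\beta\}$, since otherwise all three colors would differ. But $xy$ avoids $v$, and $\alpha,\beta$ occur only on edges through $v$. This is a contradiction.

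So some vertex $v$ has at most one color unique to it, and the induction goes through. The only delicate point is exactly this argument: extracting the two private colors and using the triangle $vxy$ to force a contradiction. The rest is bookkeeping.
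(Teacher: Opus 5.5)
Your proof is correct. The coloring $c(v_iv_j)=\min\{i,j\}$ is the standard extremal example, and your private-color argument for the upper bound is sound.

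One simplification: your contradiction never uses that \emph{every} vertex has two private colors. It shows directly that no vertex can have two, because the edge $xy$ avoids $v$ and so cannot carry $\alpha$ or $\beta$. Hence $c(K_n)\le c(K_n-v)+1$ holds for an arbitrary vertex $v$, and the ``suppose'' framing can be dropped.

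The paper does not prove this theorem; it quotes it from Erd\H{o}s, Simonovits and S\'os. The closest argument in the paper is Lemma~\ref{lem:gallai-rho}, which recovers the upper bound in the stronger form $c(Q)+\rho(Q)\le |V(Q)|-1$. It does so in three steps:
\begin{itemize}
\item it invokes Gallai's structure theorem, giving a non-trivial partition with one color between any two parts and at most two cross colors in total;
\item it applies induction inside the parts;
\item it checks $q+r\le t-1$ by a case analysis on the number $t$ of parts.
\end{itemize}

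Your route is more elementary: it needs no structure theorem, only one triangle at a single vertex. What the paper's route buys is control of $\rho$. This is indispensable for Theorem~\ref{thm:asymptotic-all-n}, where the bound $c(L_v)+\rho(L_v)\le n-2$ on link graphs is summed over all $v$. The private-color count is blind to $\rho$. A private color forms a star and never contains a monochromatic triangle. So deleting $v$ can still lower $\rho$ by the number of colors all of whose monochromatic triangles pass through $v$, and your argument gives no handle on that quantity.
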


Theorem \ref{thm:ess} completely settles the graph case.
Beyond graphs, related anti-Ramsey-type problems for uniform hypergraphs have been studied mainly for highly structured configurations, such as matchings, paths, and cycles.
Matchings in complete uniform hypergraphs were studied by \"Ozkahya and Young \cite{ozkahya2013}; in the $3$-uniform case, further exact results were obtained by Guo, Lu, and Peng \cite{guo2023}.
For paths and cycles, Gu, Li, and Shi \cite{gu2020} determined exact anti-Ramsey numbers of linear and loose paths and cycles for sufficiently large $n$.
In contrast, small local configurations in $3$-uniform hypergraphs have received much less attention.

Our first main result determines the anti-Ramsey number of rainbow cancellative configurations for all $p\ge3$ and characterizes the extremal colorings.

\begin{theorem}\label{thm:rainbow cancellative}
    If $n$ and $p$ are integers with $n\ge p+1$ and $p\ge3$, then
    \[
        \ar(n,\cF^{(p)})=1+\floor{n/p}.
    \]
    Furthermore, every extremal edge-colored $H \coloneqq K_n^{(p)}$ contains a vertex subset $U$ with $|U|=p\floor{n/p}$ such that $K_n^{(p)}[U]$ contains $\floor{n/p}$ vertex-disjoint rainbow edges, and every other edge of $K_n^{(p)}[U]$ receives one additional common color.
\end{theorem}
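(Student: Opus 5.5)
The equality in the statement of Theorem~\ref{thm:rainbow cancellative} splits into a construction giving the lower bound and a structural argument giving the upper bound together with the characterization. For the lower bound, fix $k=\floor{n/p}$ pairwise vertex-disjoint edges $E_1,\dots,E_k$. Give each $E_i$ its own color, and give every other edge of $K_n^{(p)}$ one further common color $0$. To see that this coloring has no rainbow member of $\cF^{(p)}$, take $A,B,C$ with $A\triangle B\subseteq C$ and three distinct colors.
\begin{itemize}
\item If $A$ and $B$ are both special, they are disjoint, so $|A\triangle B|=2p>|C|$, which is impossible.
\item If one of $A,B$ is special and $C$ is special, say $A$ and $C$, then $A\cap C=\emptyset$. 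Hence $A\setminus B\subseteq C$ forces $A\subseteq B$, so $A=B$, a contradiction.
\end{itemize}
These cases exhaust the configurations, since three distinct colors force at least two special edges. The construction therefore uses $1+\floor{n/p}$ colors.

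For the upper bound, the basic tool is a local lemma. Suppose $A=S\cup\{x\}$ and $B=S\cup\{y\}$ with $|S|=p-1$ and $c(A)\ne c(B)$. Then $A\triangle B=\{x,y\}$, so every edge $C\supseteq\{x,y\}$ satisfies $c(C)\in\{c(A),c(B)\}$. Such edges $C$ exist and differ from $A$ and $B$, because $n\ge p+1$ and $p\ge3$. I would first use this, together with the connectivity of the Johnson graph on $p$-sets (adjacent when they share $p-1$ vertices), to show the following. Whenever two colors meet along a Johnson edge, the edges through the corresponding pair $\{x,y\}$ are $2$-colored by them. Iterating this should produce a dominant color $0$ with the following property: every non-$0$ color class $\alpha$ has all its Johnson neighbours colored $0$ or $\alpha$.

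Next I would show that each non-dominant color class $\cC_\alpha$ has a vertex-disjoint representative, and that representatives of distinct non-dominant colors are pairwise vertex-disjoint. The tool is the general condition $A\triangle B\subseteq C$ with $|A\cap B|$ arbitrary, as follows. Suppose $A\in\cC_\alpha$ and $B\in\cC_\beta$ with $\alpha\ne\beta$ both non-dominant, and $A\cap B\ne\emptyset$. Then $|A\triangle B|\le 2p-2$. I would choose a $p$-set $C\supseteq A\triangle B$, or a chain of Johnson moves from $A$ toward $B$, and argue that some edge colored $0$ yields a rainbow triple, a contradiction. Once pairwise disjointness of $k$ non-dominant colors is established, we get $kp\le n$, hence at most $1+\floor{n/p}$ colors. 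In the extremal case, taking $U$ to be the union of the $\floor{n/p}$ disjoint representatives gives the claimed structure on $K_n^{(p)}[U]$. It remains to check that each non-dominant class meets $U$ only in its representative, which I would do by re-applying the local lemma inside $U$.

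The main obstacle is this disjointness step, together with the existence of the dominant color. A non-dominant color class could a priori be large and spread out, for example a whole star. So I would first try to prove that any non-dominant class spanning two distinct edges forces a rainbow configuration, or else can be recolored $0$ without losing colors, and reduce to singleton classes. If that fails, the fallback is induction on $n$. Delete a vertex $v$, apply the bound to $K_{n-1}^{(p)}$, and show that colors appearing only on edges through $v$ number at most one per block of $p$ vertices. This is in the spirit of the Erd\H{o}s--Simonovits--S\'os argument behind Theorem~\ref{thm:ess}, but the bookkeeping needs $p$ deletions at a time to match the floor.
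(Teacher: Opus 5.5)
Your lower-bound construction and its verification are correct, and more detailed than the paper's ``easy to see''. The structure you aim for in the upper bound is also true for every rainbow cancellative coloring; it can be extracted from the paper's argument. That structure is a background color $0$ such that edges of two distinct non-$0$ colors are always vertex-disjoint. It would immediately give both $c(H)\le 1+\floor{n/p}$ and the description of $U$.

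The gap is that this structure is the entire content of the upper bound, and the proposal does not prove it. You yourself list both of its ingredients as ``the main obstacle''.

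\begin{itemize}
\item Your local lemma only constrains edges through one pair $\{x,y\}$. ``Iterating'' it gives no mechanism that excludes two non-$0$ colors on edges sharing $p-1$ vertices, and you never say how $0$ is chosen.
\item For disjointness, the bound $|A\triangle B|\le 2p-2$ does not help. A $p$-set $C\supseteq A\triangle B$ exists only when $|A\cap B|\ge p/2$, since $|A\triangle B|=2(p-|A\cap B|)$.
\item The first alternative of your reduction to singleton classes fails. For $p=3$, $n=4$, coloring $abc,abd$ with one color and $acd,bcd$ with another is rainbow cancellative, yet each class has two edges. The recoloring alternative is only safe once you know that no $\alpha$-edge, $\delta$-edge and the recolored edge form a cancellative triple, and that is essentially the disjointness itself.
\item The fallback induction is circular in the same way. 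To show that at most one color lives only on edges meeting the deleted $p$ vertices, you must already know which $p$-set to delete. The base range $p+1\le n\le 2p$ is also not handled.
\end{itemize}

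The missing idea in the paper is a two-colored core. Apply your local lemma to the three pairs inside one $p$-set. This shows that no three edges $f_1,f_2,f_3$ with $|\bigcup_{i<j}f_i\triangle f_j|\le p$ are rainbow, because a $p$-set containing this union would need a color in $\bigcap_{i<j}\{c(f_i),c(f_j)\}=\emptyset$. In particular, for a $(p-1)$-set $X$ the edges $X\cup\{v\}$ carry at most two colors.

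Choose $X$ where two colors $\alpha_1,\alpha_2$ meet; this is where Johnson connectivity is used. This splits $V\setminus X$ into $A_1,A_2$. A nontrivial induction on $|X\setminus f|$ then shows that every edge meeting $X$, or meeting both $A_1$ and $A_2$, has color $\alpha_1$ or $\alpha_2$. The same holds for the core of every rainbow $S^{(p)}_{p-1,2}$.

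This is what excludes two colors $\beta\ne\beta'$ outside $\{\alpha_1,\alpha_2\}$ on edges with a common $(p-1)$-core $Q$. An edge through a vertex of $X$ and a vertex of $Q$ would need a color in $\{\alpha_1,\alpha_2\}\cap\{\beta,\beta'\}=\emptyset$.

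Only after this base case does a chain of Johnson moves work, and the right move is a simultaneous swap. Take $a\in A\cap B$, $z\in B\setminus A$, $y\in A\setminus B$, and put $A'=(A\setminus\{a\})\cup\{z\}$ and $B'=(B\setminus\{a\})\cup\{y\}$. Since $A\triangle A'\subseteq B$ and $B\triangle B'\subseteq A$, the $(p-1)$-case forces $c(A')=c(A)$ and $c(B')=c(B)$, while $|A'\cap B'|=|A\cap B|+1$.

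A final short argument shows that only one of $H[A_1],H[A_2]$ carries colors outside $\{\alpha_1,\alpha_2\}$. This gives $c(H)\le 2+\floor{(n-p)/p}$.
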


Thus the general cancellative problem has an exact linear answer.
When $p=3$ the structure of the forbidden family becomes more concrete, and this makes it possible to go further.
Indeed, if $A,B,C$ are distinct edges in a $3$-graph with $A\triangle B\subseteq C$, then necessarily $|A\triangle B|=2$, and the three triples form a copy of one of the following two $3$-graphs:
\[
    F_4\coloneqq\{abc,abd,bcd\},
    \qquad
    F_5\coloneqq\{abc,abd,cde\}.
\]
Consequently, an edge-colored $3$-graph is rainbow cancellative if and only if it is rainbow $\{F_4,F_5\}$-free.
This leads to a more delicate question: what happens if one forbids only \(F_4\) or only \(F_5\)?

The first surprise is that \(F_5\) is not an independent obstruction in the anti-Ramsey setting.
In a complete \(3\)-graph, excluding rainbow \(F_5\) already excludes rainbow \(F_4\), and hence already forces rainbow cancellativity.

\begin{theorem}\label{thm: F5 free}
    Let $n$ be an integer with $n\ge5$.
    If an edge-colored $K_n^{(3)}$ is rainbow $F_5$-free, then it is rainbow $F_4$-free.
    In particular, it is rainbow cancellative.
\end{theorem}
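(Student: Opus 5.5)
The plan is to argue by contradiction, reducing everything to a finite check on five vertices. Suppose some rainbow $F_5$-free coloring $c$ of $K_n^{(3)}$ with $n\ge5$ contains a rainbow copy of $F_4$, say $c(abc)=1$, $c(abd)=2$, $c(bcd)=3$. Since $n\ge5$, pick a fifth vertex $e$. Only the ten triples on $\{a,b,c,d,e\}$ will be used. The tool is the following observation: whenever two triples $X,Y$ share a pair and $X\triangle Y=\{u,v\}$, every triple $\{u,v,w\}$ with $w\notin X\cup Y$ forms a copy of $F_5$ together with $X$ and $Y$. Hence the three colors $c(X),c(Y),c(uvw)$ are not pairwise distinct.

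\textbf{Step 1: pin down the triples through $e$ meeting $\{a,c,d\}$ twice.}
\begin{itemize}
\item The pair $abc,abd$ has symmetric difference $\{c,d\}$, so $c(cde)=:x\in\{1,2\}$.
\item The pair $abc,bcd$ gives $c(ade)=:y\in\{1,3\}$.
\item The pair $abd,bcd$ gives $c(ace)=:z\in\{2,3\}$.
\end{itemize}
Next apply the observation to the pairs among $cde,ade,ace$, each of which shares a pair containing $e$.
\begin{itemize}
\item $ade,cde$ with $abc$ forbids $(x,y)=(2,3)$.
\item $ace,cde$ with $abd$ forbids $(x,z)=(1,3)$.
\item $ace,ade$ with $bcd$ forbids $(y,z)=(1,2)$.
\end{itemize}
A short case check shows only two possibilities survive: $(x,y,z)=(1,3,2)$ or $(x,y,z)=(2,1,3)$.

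\textbf{Step 2: bring in the triples containing $b$ and $e$.} The remaining triples are $abe$, $bce$ and $bde$. In each surviving case, list every $F_5$ on $\{a,b,c,d,e\}$ that contains one of these edges. There are many, for instance:
\begin{itemize}
\item $abe,abc$ with $cde$;
\item $abe,abd$ with $cde$;
\item $bce,abc$ with $ade$;
\item $bce,bcd$ with $ade$;
\item $bde,abd$ with $ace$;
\item $bde,bcd$ with $ace$;
\item $abe,bce$ with $acd$;
\item $abe,ade$ with $bcd$;
\item $bce,ace$ with $abd$.
\end{itemize}
Using the already fixed colors, these constraints should force each of $c(abe),c(bce),c(bde)$ into a set of at most two values among $\{1,2,3\}$. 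Note that $acd$ is still free, and it also has to be handled.

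\textbf{Step 3: derive the contradiction.} Finally, check the $F_5$'s formed by pairs among $abe,bce,bde$ (which share $be$) together with the edges $acd,ace,ade,cde$. The goal is to show that no assignment of colors is consistent. The automorphism of $F_4$ swapping $a$ and $c$ (which swaps colors $2$ and $3$ in a suitable way) should map one surviving case of Step 1 onto the other, so only one case needs detailed treatment.

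The main obstacle is this last exhaustive but finite bookkeeping, including the color of $acd$, which may be new. If five vertices do not suffice, I would introduce a sixth vertex $f$ when $n\ge6$ and handle $n=5$ by direct enumeration. My expectation, however, is that the $5$-vertex configuration is already contradictory. Once the contradiction is reached, no rainbow $F_4$ exists. Since every member of $\cF^{(3)}$ is a copy of $F_4$ or $F_5$, the coloring is then rainbow cancellative.
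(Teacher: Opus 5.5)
Your reduction to the ten triples on $\{a,b,c,d,e\}$ and your Step~1 are correct. The $F_5$'s with new vertex $e$ give $x\in\{1,2\}$, $y\in\{1,3\}$, $z\in\{2,3\}$. The three $F_5$'s with new vertex $b$ leave exactly $(x,y,z)=(1,3,2)$ and $(2,1,3)$. Relabelling $a\leftrightarrow c$ while swapping colors $2$ and $3$ does exchange these two cases.

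The gap is that Steps~2 and~3 never produce the contradiction. You only say the constraints ``should'' restrict the remaining colors and that you ``expect'' the five-vertex configuration to be inconsistent. That inconsistency is the whole content of the theorem, so the argument stops exactly where the work begins. Your fallback is also not coherent. For $n=5$ the ten triples on $\{a,b,c,d,e\}$ are all of $K_5^{(3)}$. If the five-vertex configuration were consistent, it would be a counterexample to the statement, and no sixth vertex could help.

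The missing step is short, but two of the three constraints that do the work are absent from your sample list. In the case $(x,y,z)=(1,3,2)$, put $t=c(abe)$.
\begin{itemize}
\item The $F_5$ $\{abd,abe,cde\}$ (colors $2,t,1$) gives $t\in\{1,2\}$.
\item The $F_5$ $\{abe,ace,bcd\}$ (symmetric difference $\{b,c\}$, new vertex $d$; colors $t,2,3$) gives $t\in\{2,3\}$.
\item The $F_5$ $\{bcd,cde,abe\}$, in which $abe$ is the third edge containing $bcd\triangle cde=\{b,e\}$ (colors $3,1,t$), gives $t\in\{1,3\}$.
\end{itemize}
These three sets have empty intersection. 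The case $(2,1,3)$ follows by your symmetry. Directly, the $F_5$'s $\{bcd,bce,ade\}$, $\{bce,ace,abd\}$ and $\{abd,ade,bce\}$ force $c(bce)\in\{1,3\}\cap\{2,3\}\cap\{1,2\}=\emptyset$. The triples $acd$ and $bde$ are never needed.

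Completed this way, your argument is a self-contained direct check on five vertices. The paper also uses only one extra vertex, but it first proves a lemma: rainbow $F_5$-freeness is equivalent to rainbow $\{H_1,H_2\}$-freeness. The $H_2$ part of that lemma needs a rotation argument around a tight $5$-cycle. After the lemma, the color of the triple formed by the extra vertex and a pair shared by two of the $F_4$-edges is pinned down by one $H_1$ and one $H_2$. A second $H_2$ then becomes rainbow.
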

As an immediate consequence of \cref{thm: F5 free,thm:rainbow cancellative}, we obtain the following exact formula.
\begin{corollary}\label{coro:maximum color}
    If $n$ is an integer with $n\ge5$, then $\ar(n,F_5)=1+\floor{n/3}$.
\end{corollary}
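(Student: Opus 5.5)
The statement to prove is the corollary. It follows by combining the two preceding theorems, one for each direction of the equality.

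\emph{Lower bound.} Every rainbow cancellative coloring is in particular rainbow $F_5$-free, because $F_5=\{abc,abd,cde\}$ satisfies $abc\triangle abd=\{c,d\}\subseteq cde$ and so belongs to $\cF^{(3)}$. Hence $\ar(n,F_5)\ge\ar(n,\cF^{(3)})$. By \cref{thm:rainbow cancellative} with $p=3$ and $n\ge 5\ge p+1$, the right-hand side equals $1+\floor{n/3}$. For a concrete witness, take $\floor{n/3}$ vertex-disjoint triples, give each its own color, and give every other triple one further common color. This coloring uses $1+\floor{n/3}$ colors and is rainbow cancellative.

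\emph{Upper bound.} Take any rainbow $F_5$-free coloring of $K_n^{(3)}$ with $n\ge5$. By \cref{thm: F5 free} it is rainbow cancellative. By \cref{thm:rainbow cancellative} it therefore uses at most $1+\floor{n/3}$ colors, so $\ar(n,F_5)\le 1+\floor{n/3}$.

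Both inequalities together give $\ar(n,F_5)=1+\floor{n/3}$. The hypothesis $n\ge5$ is exactly what \cref{thm: F5 free} requires, and it also ensures $n\ge p+1=4$ for \cref{thm:rainbow cancellative}. All of the real difficulty sits in the two cited theorems, which may be assumed here. The only point to check is the containment $F_5\in\cF^{(3)}$, which gives the direction "rainbow cancellative $\Rightarrow$ rainbow $F_5$-free" used in the lower bound.
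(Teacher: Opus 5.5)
Your proposal is correct and follows the paper's proof essentially verbatim: the upper bound comes from \cref{thm: F5 free} (rainbow $F_5$-free implies rainbow cancellative for $n\ge5$), and the lower bound comes from $F_5\in\cF^{(3)}$. Together these give $\ar(n,F_5)=\ar(n,\cF^{(3)})=1+\floor{n/3}$ by \cref{thm:rainbow cancellative} with $p=3$.
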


Theorem \ref{thm: F5 free} shows that $F_5$ still behaves in a rather rigid way.
The remaining configuration $F_4$, however, is substantially more difficult.
In contrast to the rainbow cancellative problem and to \(F_5\), the configuration \(F_4\) leads to quadratic behavior.
This is already visible in the ordinary Tur\'an problem.
Frankl and F\"uredi \cite{frankl} constructed $F_4$-free $3$-graphs with Tur\'an density $2/7$, and Mubayi \cite{mubayi} conjectured that $\pi(F_4)\coloneqq \lim_{n\to\infty}\frac{\ex(n,F_4)}{\binom{n}{3}}=2/7$.
The best known upper bound $\pi(F_4)\le 0.286889$, due to Falgas-Ravry and Vaughan \cite{falgasravry}, comes from flag algebras.

The same difficulty appears on the anti-Ramsey side.
Budden and Stiles \cite{budden} initiated the study of $\ar(n,F_4)$ and proved the following bounds.
\begin{theorem}[Budden and Stiles \cite{budden}]\label{thm F4-1}
    If $n\ge5$, then
    \begin{equation*}
        n-2\le \ar(n,F_4)\le
        \begin{cases}
            \frac{1}{4}n(n-2)-1  & \text{if $n$ is even;} \\
            \frac{1}{4}(n-1)^2-1 & \text{if $n$ is odd.}
        \end{cases}
    \end{equation*}
\end{theorem}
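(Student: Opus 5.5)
The statement has two parts, and I would prove them separately. Both rest on the following reformulation. On a $4$-set $\{a,b,c,d\}$, any three of its four triples pairwise share two vertices, so they form a copy of $F_4$. Hence an edge-coloring of $K_n^{(3)}$ is rainbow $F_4$-free if and only if the four triples inside every $4$-set use at most two colors.

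\textbf{Lower bound.} I would give an explicit coloring. Order the vertices as $v_1,\dots,v_n$ and color each triple by the smallest index among its vertices. The triples with minimum index $i$ exist exactly for $1\le i\le n-2$, so this coloring uses $n-2$ colors. Take any $4$-set $\{v_a,v_b,v_c,v_d\}$ with $a<b<c<d$. Its triples $abc$, $abd$, $acd$ get color $a$, and $bcd$ gets color $b$. So only two colors appear, and the coloring is rainbow $F_4$-free. This gives $\ar(n,F_4)\ge n-2$.

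\textbf{Upper bound.} I would argue by induction on $n$, showing that deleting a vertex costs at most $\floor{(n-1)/2}$ colors. Fix a vertex $v$ and call a color \emph{private to $v$} if every triple of that color contains $v$. Deleting $v$ leaves a rainbow $F_4$-free coloring of $K_{n-1}^{(3)}$ that has lost only the private colors of $v$. For each private color pick one representative triple $vxy$, and view the pairs $xy$ as the edges of a graph $M_v$ on the other $n-1$ vertices.

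I claim $M_v$ is a matching. Suppose instead that $vxy$ and $vxz$ carry distinct private colors. The $4$-set $\{v,x,y,z\}$ already shows these two colors, so it can show no others. In particular the triple $xyz$ must receive one of these two colors. But $xyz$ avoids $v$, contradicting privacy. So $M_v$ is a matching, $v$ has at most $\floor{(n-1)/2}$ private colors, and
\[
\ar(n,F_4)\le \ar(n-1,F_4)+\floor{(n-1)/2}.
\]
The recursion preserves the claimed bound:
\begin{itemize}
\item if $n$ is odd, $\frac14(n-1)(n-3)-1+\frac{n-1}{2}=\frac14(n-1)^2-1$;
\item if $n$ is even, $\frac14(n-2)^2-1+\frac{n-2}{2}=\frac14 n(n-2)-1$.
\end{itemize}

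\textbf{Base case (main obstacle).} The induction must start at $n=5$, since at $n=4$ the formula gives $1$ while $\ar(4,F_4)=2$. So I need $\ar(5,F_4)\le 3$, and I expect this to be the hardest step. I would suppose a coloring of the $10$ triples of $K_5^{(3)}$ uses $4$ colors, with every $4$-set showing at most two colors, and derive a contradiction.

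The tools are the following.
\begin{itemize}
\item Any two triples sharing a pair lie in a common $4$-set. If they have different colors, the other two triples of that $4$-set must reuse those two colors.
\item Each of the five $4$-sets shows at most two colors.
\item Any two of the five $4$-sets share exactly two triples, namely the triples inside their common three vertices.
\item Any two triples of $K_5^{(3)}$ share at least one vertex.
\end{itemize}

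I would first try to show that some color class must cover a large family of pairs. I would then check that the remaining triples cannot support two further colors without some $4$-set showing three. If this direct case analysis becomes messy, I would instead refine the private-color argument at $n=5$. That means showing that the matching $M_v$ (on $4$ vertices) cannot be perfect for every $v$ simultaneously, which would give the extra $-1$ directly.
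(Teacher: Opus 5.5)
Your reformulation is correct: a coloring is rainbow $F_4$-free iff every $4$-set shows at most two colors. The smallest-index coloring correctly gives $\ar(n,F_4)\ge n-2$. The lemma that the private colors of $v$ induce a matching on $V\setminus\{v\}$ is correct, and so is your check that $\ar(n,F_4)\le \ar(n-1,F_4)+\floor{(n-1)/2}$ preserves the stated formula. The paper only quotes this theorem from Budden and Stiles, so there is no in-paper proof to compare with; its own bound $\frac{5n^2-8n}{21}$ comes from a different Gallai-plus-charging argument.

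The genuine gap is the base case $\ar(5,F_4)\le 3$. You flag it as the main obstacle but only list possible strategies, and you carry out neither. This step carries real weight. Running your recursion from $\ar(4,F_4)=2$ yields only $\frac14 n(n-2)$ and $\frac14(n-1)^2$, so the entire $-1$ in the statement comes from $n=5$. Your second strategy also needs more than the matching lemma. If four colors were used, each $v$ would have exactly two private colors, and that configuration still has to be excluded by some global count.

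A short way to close the gap is as follows. For each color $\lambda$ of a rainbow $F_4$-free coloring of $K_5^{(3)}$, let $D_\lambda$ be the set of vertices $x$ such that some $\lambda$-colored triple avoids $x$. Equivalently, $\lambda$ appears inside the $4$-set $V\setminus\{x\}$, i.e.\ $\lambda$ is not private to $x$.

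Since each of the five $4$-sets shows at most two colors, $\sum_\lambda |D_\lambda|\le 10$. A $\lambda$-colored triple $T$ avoids exactly the two vertices of $V\setminus T$, so $V\setminus T$ is a pair inside $D_\lambda$. Since $T\mapsto V\setminus T$ is injective, color $\lambda$ is used on at most $\binom{|D_\lambda|}{2}$ triples, and $|D_\lambda|\ge 2$.

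If four colors were used, convexity of $\binom{d}{2}$ would cap the number of colored triples at $1+1+1+6=9<10$, a contradiction. Equivalently, taking complements turns triples of $K_5^{(3)}$ into edges of $K_5$ and $4$-sets into vertex stars. The question then becomes how many colors an edge-coloring of $K_5$ can use when every vertex sees at most two colors. With this base case supplied, your induction proves the theorem.
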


Theorem \ref{thm F4-1} leaves a substantial gap between the lower and upper bounds.
To improve the lower bound, one seeks many colors while keeping pair-overlaps among differently colored triples under very tight control.
This points naturally to design-theoretic constructions.
Indeed, a copy of \(F_4\) is created by repeated pair-intersections among triples, so a natural starting point is a large linear \(3\)-graph, that is, a family of triples in which each pair of vertices appears in at most one triple.
This leads directly to Steiner triple systems and their partial variants.

A {\it Steiner triple system} of order $n$, denoted $\STS(n)$, is a pair $(V,\mathcal B)$ where $V$ is a finite set of vertices with $|V|=n$ and $\mathcal B\subseteq \binom{V}{3}$ is a family of $3$-subsets (called {\it blocks}) such that every pair $\{x,y\}\in\binom{V}{2}$ is contained in exactly one block of $\mathcal B$.
A classical theorem in design theory states that an $\STS(n)$ exists if and only if $n\equiv 1$ or $3 \pmod 6$, see \cite{kirkman}.
For general $n$, if we only insist that every pair is used at most once, then it leads to the following notions.
\begin{definition}\label{def:max-psts}
    A {\it partial Steiner triple system} (PSTS) of order $n$ is a pair $(V,\mathcal P)$, where $V$ is a set of $n$ vertices and $\mathcal P\subseteq \binom{V}{3}$ is a family of triples such that every pair $\{x,y\}\in\binom{V}{2}$ is contained in at most one triple in $\mathcal P$.
    A {\it maximum partial Steiner triple system} (MPSTS) of order \(n\), denoted by \(\MPSTS(n)\), is a PSTS of order \(n\) that maximizes the number of triples.\footnote{Readers may consider $\STS$, $\PSTS$ and $\MPSTS$ as $3$-uniform linear hypergraphs.}
\end{definition}

Denote the number of blocks in an $\MPSTS(n)$ by $m(n)$.
The exact value of $m(n)$ is given by the following classical theorem of Sch\"onheim \cite{schonheim1966}.
\begin{theorem}[Sch\"onheim \cite{schonheim1966}]
    For every $n$ with $n\ge3$, we have
    \begin{equation}\label{eq:Dv32-mod6}
        m(n)=
        \begin{cases}
            \frac{n(n-2)}{6},   & n\equiv 0,2 \pmod 6, \\
            \frac{n(n-1)}{6},   & n\equiv 1,3 \pmod 6, \\
            \frac{n^2-2n-2}{6}, & n\equiv 4 \pmod 6,   \\
            \frac{n^2-n-8}{6},  & n\equiv 5 \pmod 6.
        \end{cases}
    \end{equation}
\end{theorem}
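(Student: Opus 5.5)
The plan is to prove matching upper and lower bounds. The upper bound comes from a degree count combined with a parity and congruence analysis of the uncovered pairs. The lower bound comes from explicit constructions built from Steiner triple systems, using Kirkman's theorem that an $\STS(n)$ exists for $n\equiv1,3\pmod 6$.

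\textbf{Upper bound.} Let $(V,\mathcal P)$ be a $\PSTS(n)$ with $m$ triples. A triple through a vertex $v$ covers two pairs at $v$, so $\deg(v)\le\floor{(n-1)/2}$. Define the \emph{leave} $L$ as the graph of pairs not covered by any triple, so that $|E(L)|=\binom n2-3m$ and $\deg_L(v)=n-1-2\deg(v)$.
\begin{itemize}
\item For $n$ odd, every $\deg_L(v)$ is even, so $m\le\floor{n(n-1)/6}$. This gives the claimed value when $n\equiv1,3\pmod 6$. For $n\equiv5\pmod 6$ one checks $\binom n2\equiv1\pmod 3$. Hence $L$ is nonempty with all degrees even, which forces $|E(L)|\ge3$. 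Since $|E(L)|\equiv1\pmod 3$, in fact $|E(L)|\ge4$, and therefore $m\le(\binom n2-4)/3=(n^2-n-8)/6$.
\item For $n$ even, every $\deg_L(v)$ is odd, so $|E(L)|\ge n/2$. This gives $m\le\floor{n(n-2)/6}$, which settles $n\equiv0,2\pmod 6$. For $n\equiv4\pmod 6$ one has $\binom n2\equiv0\pmod 3$ while $n/2\equiv2\pmod 3$. So $|E(L)|\ge n/2+1$, and $m\le(n^2-2n-2)/6$.
\end{itemize}

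\textbf{Lower bound.}
\begin{itemize}
\item For $n\equiv1,3\pmod 6$, take an $\STS(n)$.
\item For $n\equiv0,2\pmod 6$, delete one vertex from an $\STS(n+1)$. This removes $n/2$ blocks and leaves $\frac{(n+1)n}{6}-\frac n2=\frac{n(n-2)}{6}$ triples.
\item For $n\equiv4\pmod 6$, start from an extremal packing on $n+1\equiv5\pmod 6$ points whose leave is a $4$-cycle. Delete a vertex of that $4$-cycle, whose degree is $(n-2)/2$. This leaves $\frac{(n+1)^2-(n+1)-8}{6}-\frac{n-2}{2}=\frac{n^2-2n-2}{6}$ triples.
\end{itemize}
Everything therefore reduces to the case $n\equiv5\pmod 6$.

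\textbf{Main obstacle.} The hard step is constructing, for $n=6k+5$, a $\PSTS(n)$ whose leave is exactly a $4$-cycle. Deleting vertices from an $\STS(n+2)$ falls short by a linear number of triples. Instead, I would build the packing directly by a Bose/Skolem-type cyclic construction on $\mathbb Z_{2k+1}\times\{0,1,2\}$ plus two extra points. Alternatively, I would take an $\STS(6k+7)$, remove two points $x,y$ and the block $xyz$, and then repair the pairs through $z$ by re-pairing along a suitable $1$-factor. If neither approach works cleanly, I would fall back on citing the classical packing constructions of Schönheim and Spencer for this residue class.
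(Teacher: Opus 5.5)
Your upper bound is complete and correct. It is the standard leave-graph argument: the parity of $\deg_L(v)=n-1-2\deg(v)$, combined with $|E(L)|=\binom n2-3m$ modulo $3$. The constructions for $n\equiv 0,1,2,3\pmod 6$ are correct given Kirkman's theorem, and so is the arithmetic reducing $n\equiv 4$ to an extremal packing on $n+1\equiv 5\pmod 6$ points. (The paper itself gives no proof; it only cites Sch\"onheim.)

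\textbf{The gap.} It is the step you flag yourself. Nothing in the proposal produces a $\PSTS(6k+5)$ with $6k^2+9k+2$ triples. Your $n\equiv 4$ case is built from it, so the lower bound is unproved for two residue classes. Falling back on Sch\"onheim or Spencer there just cites the statement.

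Your second idea does not work as described. Deleting $x,y$ from an $\STS(6k+7)$ leaves $6k^2+7k+2$ triples. The leave is $M_x\cup M_y$, the union of two disjoint perfect matchings on $V\setminus\{x,y,z\}$, hence triangle-free. Every pair through $z$ is already covered, and re-pairing the blocks through $z$ along another $1$-factor keeps the number of triples unchanged. Recovering the missing $2k$ triples would then require the new leave to split into $2k$ triangles plus a $4$-cycle. That is a very special configuration of the blocks through $x,y,z$, and nothing in your plan provides it.

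\textbf{How to close it.} Your first idea is the one that works, and it can be written down explicitly. Let $Q=\mathbb{Z}_{2k+1}$ with $x\circ y=(k+1)(x+y)$, which is a commutative idempotent quasigroup. Let $\sigma$ be the involution of $Q$ fixing $0$ and swapping $2i-1\leftrightarrow 2i$ for $1\le i\le k$. On $\{\infty_1,\infty_2\}\cup(Q\times\mathbb{Z}_3)$ take the triples
\[
    \{(x,j),(y,j),(\sigma(x\circ y),j+1)\}\ (x\ne y),\qquad
    \{\infty_1,(2i-1,j),(2i,j+1)\},\qquad
    \{\infty_2,(2i,j),(2i-1,j+1)\},
\]
for all $j\in\mathbb{Z}_3$ and $1\le i\le k$, together with $\{\infty_1,\infty_2,(0,0)\}$ and $\{(0,0),(0,1),(0,2)\}$. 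The pairs are covered as follows:
\begin{itemize}
    \item Same-level pairs are covered once by the first family.
    \item A cross pair $\{(x,j),(z,j+1)\}$ with $z\ne\sigma(x)$ is covered once by the first family: take the unique $y$ with $x\circ y=\sigma(z)$, and idempotence gives $y\ne x$.
    \item The pairs $\{(x,j),(\sigma(x),j+1)\}$ and the pairs through $\infty_1,\infty_2$ are covered once by the remaining triples.
    \item The only exceptions are $\infty_r(0,1)$ and $\infty_r(0,2)$ for $r=1,2$, which form a $4$-cycle.
\end{itemize}
The number of triples is $3k(2k+1)+6k+2=6k^2+9k+2=(n^2-n-8)/6$ for $n=6k+5$. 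With this in place, your argument becomes a complete proof, including the $n\equiv 4$ reduction (delete $\infty_1$).
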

In particular, $m(n)=n^2/6+O(n)$, so an $\MPSTS(n)$ already has the correct scale to yield a quadratic lower bound.
Our first construction indeed colors the triples of an $\MPSTS(n)$ with distinct colors and assigns one additional common color to all remaining triples.
For $n=2^s-1$, the situation is richer: the projective-geometric model of $\STS(n)$ provides extra structure, and independent sets in the Grassmann graph allow us to introduce many more colors while still avoiding rainbow $F_4$.
This leads to our third main result.

\begin{theorem}\label{thm:F4}
    If $n\ge4$, then
    \[
        \frac{5n^2-8n}{21} \ge \ar(n,F_4) \ge m(n)+1.
    \]
    Moreover, if $n=2^s-1$ for some $s\ge3$, then
    \[
        \ar(n,F_4)\ge m(n)+\frac{n^2}{42}+o(n^2)=\frac{4}{21}n^2+o(n^2).
    \]
\end{theorem}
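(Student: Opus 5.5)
Theorem~\ref{thm:F4} has three parts: the general lower bound $m(n)+1$, the upper bound $(5n^2-8n)/21$, and the improved lower bound for $n=2^s-1$. I would prove them separately.

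\textbf{The lower bound $m(n)+1$.} Fix an $\MPSTS(n)$ with block set $\mathcal P$. Give each block of $\mathcal P$ its own color, and give every other triple one common extra color $\star$. Suppose $\{abc,abd,bcd\}$ is a rainbow $F_4$. Then at most one of its triples has color $\star$, so at least two of them lie in $\mathcal P$. But any two triples of $F_4$ share a pair: $abc,abd$ share $ab$, $abc,bcd$ share $bc$, and $abd,bcd$ share $bd$. This contradicts linearity of $\mathcal P$. For $n=4$ the bound is $m(4)+1=2$, which is trivially attained.

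\textbf{The improved lower bound for $n=2^s-1$.} Take $V=\PG(s-1,2)$, whose lines form an $\STS(n)$ with $n(n-1)/6$ blocks. Each line again gets its own color. I would then recolor some non-line triples. A non-line triple $\{x,y,z\}$ spans a projective plane $\pi=\langle x,y,z\rangle$, which is a Fano plane; it is a point of the Grassmann graph of planes, where two planes are adjacent when they share a line. The idea is to choose an independent set $\mathcal I$ of planes and give each $\pi\in\mathcal I$ a few new colors on its non-line triples, while all other non-line triples keep color $\star$.

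The key check is that a rainbow $F_4$ would need two triples sharing a pair in which neither triple is a line of the same plane. This needs a careful case analysis, using that every triple inside $\pi$ spans $\pi$. In particular, all three triples of a copy of $F_4$ on four points of $\pi$ lie inside $\pi$, while a copy of $F_4$ spanning a $3$-space can meet at most one plane of $\mathcal I$, by independence.

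Within a single Fano plane I would find a rainbow-$F_4$-free coloring of its $35$ triples that uses its $7$ line colors plus about $k$ extra colors, probably splitting the $28$ non-lines into classes (for instance according to the line they miss). The count of planes is about $n^3/168$, and an independent set in the Grassmann graph (planes pairwise meeting in at most a point) has size about $n^2/42$ asymptotically. This suggests one extra color per chosen plane, giving $n^2/42$ additional colors, so
\[
\frac{n^2}{6}+\frac{n^2}{42}=\frac{4n^2}{21}.
\]
This is the step I expect to be the main obstacle. It requires both the local coloring of the Fano plane and a verification that no $F_4$ straddling a chosen plane and the outside becomes rainbow. I would first try making all non-line triples of each $\pi\in\mathcal I$ a single new color $c_\pi$.

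\textbf{The upper bound.} I would take a representing subgraph: one triple of each color, forming a rainbow $3$-graph $G$ with $c$ edges. Since $G$ is itself rainbow, it is $F_4$-free. The plan is to count, for each pair $uv$, the codegree $d(uv)$ in $G$. If $abc,abd\in G$, then $bcd$, $acd$ must not create a rainbow $F_4$, which forces color coincidences with $abc$ or $abd$. I would choose the representatives to maximize the number of pairs of codegree $\le 1$ (a local switching argument). I would then bound $\sum_{uv}\binom{d(uv)}{2}$ against the pairs that are "blocked", derive a linear inequality in $c$ with denominator $21$ (as suggested by the Fano structure), and conclude $c\le (5n^2-8n)/21$ via double counting over $4$-sets. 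Each $4$-set contains at most $2$ representative triples unless colors repeat.
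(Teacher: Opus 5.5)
Your two lower-bound constructions are correct and match the paper's. For $m(n)+1$, your observation that any two triples of $F_4$ share a pair, so that a rainbow copy would contain two blocks of a linear system, is the same content as the paper's representing-graph argument.

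For $n=2^s-1$, you should commit to a single new color $c_\pi$ on all $28$ non-line triples of each $\pi\in\mathcal I$. Splitting them into several classes cannot work. When each line color of a Fano plane is used only once, every $4$-set containing a line forces its other three triples to share a color, and these equalities propagate to all $28$ non-lines (this is the paper's proposition on rainbow Fano planes). With the single-color choice your case split closes at once:
\begin{itemize}
\item If the four points of the $F_4$ are coplanar, at most one of its three triples is a line. The others span that same plane, so they get the same color.
\item If the four points span a projective $3$-space, none of the triples is a line. The three planes they span pairwise share a line, so by independence at most one lies in $\mathcal I$, and the other two both get $\star$.
\end{itemize}
Note that $|\mathcal I|=n^2/42+o(n^2)$ needs the Blackburn--Etzion theorem; counting lines only gives the matching upper bound.

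The upper bound, however, is not proved, and this is a genuine gap. The facts you propose to count with cannot give a quadratic bound by themselves. $F_4$-freeness of a representing $3$-graph only gives $c\le\ex(n,F_4)=\Theta(n^3)$. Double counting ``each $4$-set contains at most two representative triples'' only gives $c\le 2\binom n4/(n-3)=n(n-1)(n-2)/12$. Beyond that, you never formulate an inequality for $\sum_{uv}\binom{d(uv)}{2}$, the switching step has no specified effect, and the denominator $21$ is read off from the answer rather than derived.

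The missing ingredient is global information from the link colorings. Each link $L_v$ (color $xy$ by $c(vxy)$) is a Gallai coloring of $K_{n-1}$, so it has at most $n-2$ colors. Hence $I=\sum_\lambda|V_\lambda|=\sum_v c(L_v)\le n(n-2)$, and since every support has size at least $3$, this alone gives $c\le n(n-2)/3$. The paper then does two things:
\begin{itemize}
\item It sharpens the Gallai bound to $c(Q)+\rho(Q)\le|V(Q)|-1$, where $\rho(Q)$ counts colors containing a monochromatic triangle.
\item It shows that a typical color is worth $7$ units of $I+\rho$. The singleton-colored triples form a partial Steiner triple system $F$. A triple $xyz$ of a repeated color $\alpha$ together with a singleton triple $xyd$ forces $c(xzd)=c(yzd)=\alpha$. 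Consequently a repeated color whose triples have all their pairs covered by $F$ has support at least $7$. The remaining repeated colors are charged to the leave-edges of $F$.
\end{itemize}
The total shortfall from $7$ per color is at most $\frac43$ per vertex pair. A singleton triple loses $4$, spread over its $3$ covered pairs, and each leave-edge absorbs at most $\frac43$. This gives $7c\le n(n-2)+\frac43\binom n2$, which is exactly $c\le(5n^2-8n)/21$. Your codegree/switching plan would need to supply a global bound of this kind together with such a charging scheme; as written it does not.
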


The paper is organized as follows.
In \cref{sec:def lem}, we collect notations and auxiliary lemmas.
Sections \ref{sec: proof of 1}, \ref{sec: proof of 2} and \ref{sec: proof of 3} contain the proofs of \cref{thm:rainbow cancellative}, \cref{thm: F5 free}, and \cref{thm:F4}, respectively.
We conclude in \cref{sec:concluding} with a further problem.

\section{Definitions and Lemmas}\label{sec:def lem}

In this section, we will introduce several useful definitions and lemmas.
Let $p,q$ be integers with $0\le q\le p$.
Let $S_{q,r}^{(p)}$ be the $p$-uniform hypergraph with edge set
\[
    E(S_{q,r}^{(p)})=\{Q\cup P_i: i\in[r]\},
\]
where $Q,P_1,\dots,P_r$ are pairwise disjoint vertex sets with $|Q|=q$ and $|P_i|=p-q$ for all $i\in[r]$.
The set $Q$ is called the {\it core} of $S_{q,r}^{(p)}$.

The hypergraph $S_{q,r}^{(p)}$ can be seen as a generalization of the star graph when $q\ge1$, and as a generalization of the matching graph when $q=0$.
Several examples of the hypergraph $S_{q,r}^{(p)}$ are given in Figure \ref{fig:spqr}.
\begin{figure}[H]
    \centering
    \begin{tikzpicture}
        [
            color1/.style={draw, rounded corners, inner sep=6pt},
            color2/.style={draw, very thick, rounded corners, densely dashed, gray, inner sep=14pt},
        ]
        \node (v11) at (-1.2,-1) {};
        \node (v12) at (-1.2,-0.5) {};
        \node (v13) at (-1.2,0) {};
        \node (v14) at (-1.2,0.5) {};
        \node (v15) at (-1.2,1) {};
        \fill (v11) circle (0.1) node {};
        \fill (v12) circle (0.1) node {};
        \fill (v13) circle (0.1) node {};
        \fill (v14) circle (0.1) node {};
        \fill (v15) circle (0.1) node {};
        \node (v21) at (0,-1) {};
        \node (v22) at (0,-0.5) {};
        \node (v23) at (0,0) {};
        \node (v24) at (0,0.5) {};
        \node (v25) at (0,1) {};
        \fill (v21) circle (0.1) node {};
        \fill (v22) circle (0.1) node {};
        \fill (v23) circle (0.1) node {};
        \fill (v24) circle (0.1) node {};
        \fill (v25) circle (0.1) node {};
        \node (v31) at (1.2,-1) {};
        \node (v32) at (1.2,-0.5) {};
        \node (v33) at (1.2,0) {};
        \node (v34) at (1.2,0.5) {};
        \node (v35) at (1.2,1) {};
        \fill (v31) circle (0.1) node {};
        \fill (v32) circle (0.1) node {};
        \fill (v33) circle (0.1) node {};
        \fill (v34) circle (0.1) node {};
        \fill (v35) circle (0.1) node {};
        \node [color1, fit=(v11) (v13) (v15)] {};
        \node [color1, fit=(v21) (v23) (v25)] {};
        \node [color1, fit=(v31) (v33) (v35)] {};
        \node [color2, fit=(v11) (v35)] {};
        \node at (-0,-2) {(i)};
    \end{tikzpicture}
    \quad
    \begin{tikzpicture}
        [
        color1/.style={draw, rounded corners, inner sep=6pt},
        color1'/.style={draw, rounded corners, inner sep=4pt},
        color1''/.style={draw, rounded corners, inner sep=2pt},
        color1'''/.style={draw, rounded corners, inner sep=8pt},
        color2/.style={draw, very thick, rounded corners, densely dashed, gray, inner sep=14pt},
        ]
        \node (c1) at (0.2,0.2) {};
        \node (c2) at (0.2,-0.2) {};
        \node (c3) at (-0.2,-0.2) {};
        \node (c4) at (-0.2,0.2) {};
        \fill (c1) circle (0.1) node {};
        \fill (c2) circle (0.1) node {};
        \fill (c3) circle (0.1) node {};
        \fill (c4) circle (0.1) node {};
        \node (v11) at (1,0.2) {};
        \node (v12) at (1,-0.2) {};
        \node (v21) at (-1,0.2) {};
        \node (v22) at (-1,-0.2) {};
        \node (v31) at (0.2,1) {};
        \node (v32) at (-0.2,1) {};
        \node (v41) at (0.2,-1) {};
        \node (v42) at (-0.2,-1) {};
        \fill (v11) circle (0.1) node {};
        \fill (v12) circle (0.1) node {};
        \fill (v21) circle (0.1) node {};
        \fill (v22) circle (0.1) node {};
        \fill (v31) circle (0.1) node {};
        \fill (v32) circle (0.1) node {};
        \fill (v41) circle (0.1) node {};
        \fill (v42) circle (0.1) node {};
        \node (s1) at (1,1) {};
        \node (s2) at (-1,-1) {};
        \node [color1, fit=(c1) (c3) (v11)] {};
        \node [color1', fit=(c1) (c3) (v21)] {};
        \node [color1'', fit=(c1) (c3) (v31)] {};
        \node [color1''', fit=(c1) (c3) (v41)] {};
        \node [color2, fit=(s1) (s2)] {};
        \node at (-0,-2) {(ii)};
    \end{tikzpicture}
    \quad
    \begin{tikzpicture}
        [
        color1/.style={draw, rounded corners, inner sep=6pt},
        color1'/.style={draw, rounded corners, inner sep=4pt},
        color1''/.style={draw, rounded corners, inner sep=2pt},
        color1'''/.style={draw, rounded corners, inner sep=8pt},
        color2/.style={draw, very thick, rounded corners, densely dashed, gray, inner sep=14pt},
        ]
        \node (c1) at (0.2,0) {};
        \node (c2) at (-0.2,0) {};
        \node (c3) at (0,0.2) {};
        \node (v1) at (0,1) {};
        \node (v2) at (1.2,0) {};
        \node (v3) at (0,-1) {};
        \fill (c1) circle (0.1) node {};
        \fill (c2) circle (0.1) node {};
        \fill (c3) circle (0.1) node {};
        \fill (v1) circle (0.1) node {};
        \fill (v2) circle (0.1) node {};
        \fill (v3) circle (0.1) node {};
        \node (s1) at (0.2,0.2) {};
        \node (s2) at (-0.2,1) {};
        \node [color1, fit=(c1) (c2) (v1)] {};
        \node [color1', fit=(c2) (c3) (v2)] {};
        \node [color1'', fit=(s1) (c2) (v3)] {};
        \node [color2, fit=(s2) (v2) (v3)] {};
        \node at (0.5,-2) {(iii)};
    \end{tikzpicture}
    \caption{(i) $S_{0,3}^{(5)}$, (ii) $S_{4,4}^{(6)}$, and (iii) $S_{3,3}^{(4)}$.}
    \label{fig:spqr}
\end{figure}
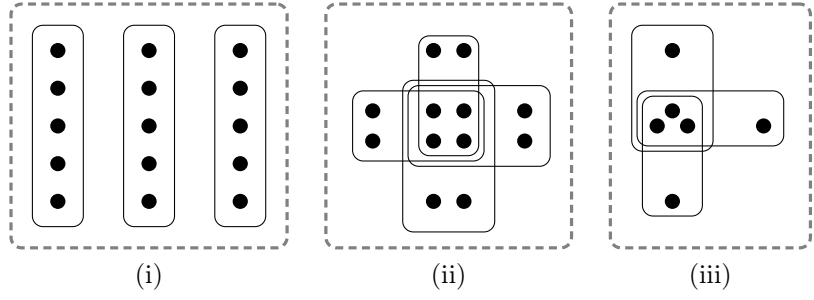

Denote by $\cT^{(p)}$ the minimal\footnote{In the sense that if $H_1\subsetneq H_2$ and $H_2\in\cT^{(p)}$, then $H_1\notin \cT^{(p)}$.} family of $p$-uniform hypergraphs consisting of three edges $f_1,f_2,f_3$ with $\left|\bigcup_{1\le i<j\le 3}f_i\triangle f_j\right|\le p$.
This family of $p$-graphs is crucial in the proof of \cref{thm:rainbow cancellative}, see the following lemma.
\begin{lemma}\label{lem:rainbow triple}
    Let $p$ be an integer with $p\ge3$ and $H$ be an edge-colored complete $p$-graph.
    If $H$ is rainbow cancellative, then it is rainbow $\cT^{(p)}$-free, and hence it is rainbow $S^{(p)}_{p-1,3}$-free.
\end{lemma}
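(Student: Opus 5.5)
The plan is a direct argument: take a hypothetical rainbow member of $\cT^{(p)}$ and use completeness of $H$ to find a single edge that, together with two of its three edges, forms a rainbow cancellative configuration.

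Suppose, for contradiction, that $H$ contains a rainbow copy of some member of $\cT^{(p)}$, with edges $f_1,f_2,f_3$ of three pairwise distinct colors. Set
\[
D\coloneqq\bigcup_{1\le i<j\le 3} f_i\triangle f_j ,\qquad |D|\le p .
\]
Since $f_1,f_2,f_3$ are edges of the complete $p$-graph $H$, there are at least $p$ vertices. Hence $D$ can be extended to a $p$-set $C\supseteq D$, and $C$ is an edge of $H$ because $H$ is complete. By construction, $f_i\triangle f_j\subseteq D\subseteq C$ for every pair $i<j$.

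Now compare colors. The colors $c(f_1),c(f_2),c(f_3)$ are pairwise distinct, so $c(C)$ equals at most one of them. Hence there are two indices, say $1$ and $2$ after relabelling, with $c(C)\ne c(f_1)$ and $c(C)\ne c(f_2)$. Then:
\begin{itemize}
\item $C$, $f_1$, $f_2$ are pairwise distinct edges: they carry three distinct colors, so in particular $C\ne f_1,f_2$ and $f_1\ne f_2$;
\item they are rainbow, again because their colors are distinct;
\item they satisfy $f_1\triangle f_2\subseteq C$.
\end{itemize}
So $\{f_1,f_2,C\}$ is a rainbow copy of a member of $\cF^{(p)}$, contradicting that $H$ is rainbow cancellative. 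This proves that $H$ is rainbow $\cT^{(p)}$-free. Minimality of the family $\cT^{(p)}$ plays no role here, since we only use the defining inequality $|D|\le p$.

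For the second assertion, consider a copy of $S^{(p)}_{p-1,3}$. Its edges are $Q\cup\{x_i\}$ for $i\in[3]$, with $|Q|=p-1$ and distinct $x_i\notin Q$. The pairwise symmetric differences are $\{x_i,x_j\}$, so their union is $\{x_1,x_2,x_3\}$, of size $3\le p$. Thus these three edges satisfy the defining condition of $\cT^{(p)}$. A rainbow copy of $S^{(p)}_{p-1,3}$ would therefore give three rainbow edges $f_1,f_2,f_3$ with the union of pairwise symmetric differences of size at most $p$. The argument above applies verbatim and yields a contradiction.

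The only delicate points are that the edge $C$ exists, which needs $n\ge p$, and that the cancellative triple consists of distinct edges. Both follow immediately, the latter from the distinctness of the colors, so I expect no real obstacle.
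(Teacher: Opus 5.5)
Your proof is correct and is essentially the paper's argument: the paper observes that the covering edge must have its color in each of $\{c(f_i),c(f_j)\}$ and that these three sets have empty intersection, which is the same as your pigeonhole observation that $c(C)$ avoids two of the three colors. Your treatment of $S^{(p)}_{p-1,3}$, via the union $\{x_1,x_2,x_3\}$ of size $3\le p$, also matches the paper's.
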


\begin{proof}
    Suppose to the contrary that $H$ contains some rainbow copy of $T$ in $\cT^{(p)}$.
    Let the three edges of $T$ be $f_1,f_2,f_3$.
    Since $\left|\bigcup_{1\le i<j\le 3}f_i\triangle f_j\right|\le p$, there exists an edge $f_4$ of $H$ with $\left(\bigcup_{1\le i<j\le 3}f_i\triangle f_j\right)\subseteq f_4$.
    It follows from $f_1\triangle f_2\subseteq f_4$ and $H$ is rainbow cancellative that $c(f_4)\in\{c(f_1),c(f_2)\}$.
    Similarly, we have $c(f_4)\in\{c(f_1),c(f_3)\}$ and $c(f_4)\in\{c(f_2),c(f_3)\}$.
    This shows that $c(f_4)\in \bigcap_{1\le i<j\le 3}\{c(f_i),c(f_j)\}=\emptyset$, a contradiction.

    To see that $H$ is rainbow $S^{(p)}_{p-1,3}$-free, we only need to prove that $S^{(p)}_{p-1,3}\in \cT^{(p)}$.
    Let the three edges of a $S^{(p)}_{p-1,3}$ be $f_1',f_2',f_3'$.
    By the definition of $S^{(p)}_{p-1,3}$, there is a set $X$ with $|X|=p-1$ and three vertices $x_1,x_2,x_3$ such that $f_i'=X\cup\{x_i\}$.
    It is easy to verify that
    \[
        \left|\bigcup_{1\le i<j\le 3}f_i'\triangle f_j'\right|=\left|\{x_1,x_2,x_3\}\right|=3\le p,
    \]
    and hence $S^{(p)}_{p-1,3}\in \cT^{(p)}$.
    This proves the lemma.
\end{proof}

Denote the $3$-graph $\{abc,abd,abe\}$ by $H_1$ and the $3$-graph $\{abc,bcd,cde\}$ by $H_2$.
The illustrations of $H_1$ and $H_2$ are given in Figure \ref{fig:H1H2}.
\begin{figure}[H]
    \centering
    \begin{tikzpicture}
        [
        color1/.style={draw, rounded corners, inner sep=14pt},
        color1'/.style={draw, rounded corners, inner sep=12pt},
        color1''/.style={draw, rounded corners, inner sep=10pt},
        color2/.style={draw, very thick, rounded corners, densely dashed, gray, inner sep=18pt},
        ]
        \node (v1) at (0.5,0) {};
        \node (v2) at (-0.5,0) {};
        \node (v3) at (0,1) {};
        \node (v4) at (1.5,0) {};
        \node (v5) at (-1.5,0) {};
        \node (r1) at (1.5,1) {};
        \node (r2) at (-1.5,-1) {};
        \fill (v1) circle (0.1) node[below] {$b$};
        \fill (v2) circle (0.1) node[below] {$a$};
        \fill (v3) circle (0.1) node[below] {$c$};
        \fill (v4) circle (0.1) node[below] {$d$};
        \fill (v5) circle (0.1) node[below] {$e$};
        \node [color1'', fit=(v1) (v2) (v3)] {};
        \node [color1', fit=(v1) (v2) (v4)] {};
        \node [color1, fit=(v1) (v2) (v5)] {};
        \node [color2, fit=(r1) (r2)] {};
        \node at (-0,-2.1) {(i)};
    \end{tikzpicture}
    \quad
    \begin{tikzpicture}
        [
        color1/.style={draw, rounded corners, inner sep=14pt},
        color1'/.style={draw, rounded corners, inner sep=12pt},
        color1''/.style={draw, rounded corners, inner sep=10pt},
        color2/.style={draw, very thick, rounded corners, densely dashed, gray, inner sep=18pt},
        ]
        \node (v1) at (-2,0) {};
        \node (v2) at (-1,0) {};
        \node (v3) at (0,0) {};
        \node (v4) at (1,0) {};
        \node (v5) at (2,0) {};
        \node (r1) at (2,1) {};
        \node (r2) at (-2,-1) {};
        \fill (v1) circle (0.1) node[below] {$a$};
        \fill (v2) circle (0.1) node[below] {$b$};
        \fill (v3) circle (0.1) node[below] {$c$};
        \fill (v4) circle (0.1) node[below] {$d$};
        \fill (v5) circle (0.1) node[below] {$e$};
        \node [color1, fit=(v1) (v2) (v3)] {};
        \node [color1', fit=(v2) (v3) (v4)] {};
        \node [color1'', fit=(v3) (v4) (v5)] {};
        \node [color2, fit=(r1) (r2)] {};
        \node at (-0,-2.1) {(ii)};
    \end{tikzpicture}
    \caption{(i) $H_1$, and (ii) $H_2$.}
    \label{fig:H1H2}
\end{figure}
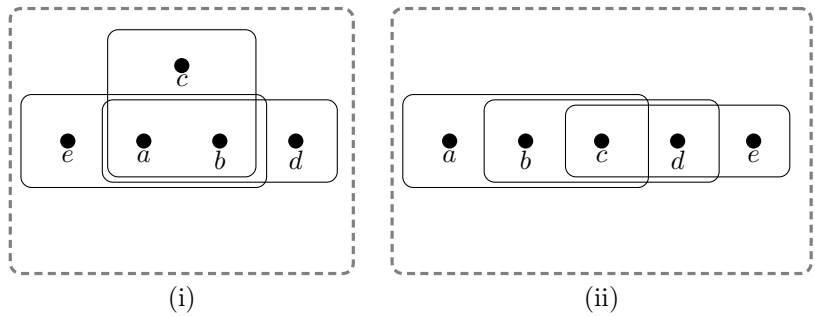
\begin{lemma}\label{lem:F5 free}
    If $H$ is an edge-colored complete $3$-graph, then $H$ is rainbow $F_5$-free if and only if $H$ is rainbow $\{H_1,H_2\}$-free.
\end{lemma}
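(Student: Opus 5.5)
The plan is to prove both implications directly. The only tool is the following observation: whenever two edges of some forbidden shape have distinct colors, every edge completing them to that shape must repeat one of those two colors. Each implication then becomes a finite check on five vertices $a,b,c,d,e$.

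\emph{($\Rightarrow$) Rainbow $F_5$-free implies rainbow $\{H_1,H_2\}$-free.}

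Suppose first that $abc,abd,abe$ is a rainbow $H_1$ with colors $1,2,3$. Look at the edge $cde$. Each of the three triples
\[
\{abc,abd,cde\},\qquad \{abc,abe,cde\},\qquad \{abd,abe,cde\}
\]
is a copy of $F_5$ with core $ab$. Avoiding a rainbow copy of each forces $c(cde)\in\{1,2\}\cap\{1,3\}\cap\{2,3\}=\emptyset$, a contradiction.

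Next suppose that $abc,bcd,cde$ is a rainbow $H_2$ with colors $1,2,3$.
\begin{enumerate}
\item Consider $ade$. The triples $\{abc,bcd,ade\}$ and $\{ade,cde,abc\}$ are copies of $F_5$, so $c(ade)\in\{1,2\}\cap\{1,3\}$, that is, $c(ade)=1$.
\item Consider $abe$. The triples $\{bcd,cde,abe\}$ and $\{abe,abc,cde\}$ are copies of $F_5$, so $c(abe)\in\{2,3\}\cap\{1,3\}$, that is, $c(abe)=3$.
\item Now $\{abe,ade,bcd\}$ is an $F_5$ with core $ae$, and its colors are $3,1,2$. This is a rainbow $F_5$, a contradiction.
\end{enumerate}

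\emph{($\Leftarrow$) A rainbow $F_5$ yields a rainbow $H_1$ or $H_2$.}

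Let $abc,abd,cde$ be a rainbow $F_5$ with colors $1,2,3$, and assume there is no rainbow $H_1$ or $H_2$. The deductions proceed in the following order.
\begin{enumerate}
\item The edge $abe$ lies in the $H_1$ given by $\{abc,abd,abe\}$, so $c(abe)\in\{1,2\}$. The roles of $c$ and $d$ in $F_5$ are symmetric, so we may assume $c(abe)=1$.
\item The $H_2$-paths $abd,bde,cde$ and $abe,bde,cde$ give $c(bde)\in\{2,3\}\cap\{1,3\}$, so $c(bde)=3$.
\item The $H_2$-paths $abd,ade,cde$ and $abe,ade,cde$ give $c(ade)=3$ in the same way.
\item The $H_1$ with pair $bd$, namely $\{abd,bde,bcd\}$, gives $c(bcd)\in\{2,3\}$. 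The $H_2$-path $abc,bcd,cde$ gives $c(bcd)\in\{1,3\}$. Hence $c(bcd)=3$.
\item The $H_2$-path $abd,abe,bce$ gives $c(bce)\in\{1,2\}$. The $H_1$ with pair $be$, namely $\{abe,bde,bce\}$, gives $c(bce)\in\{1,3\}$. Hence $c(bce)=1$.
\item Finally, $adb,dbc,bce$ is an $H_2$-path, that is, it is the triple $\{abd,bcd,bce\}$. Its colors are $2,3,1$, so it is a rainbow $H_2$, a contradiction.
\end{enumerate}

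The main obstacle is the converse direction. One has to choose the right auxiliary edges among the $\binom{5}{3}$ triples, in the right order, so that the forced colors eventually collide. The approach is to first pin down the edges adjacent to the core pair, then to exploit the pairs $bd$ and $be$. Each step must be checked carefully to confirm that the three edges used really form a copy of $H_1$ or $H_2$ and not some other configuration.
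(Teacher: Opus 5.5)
Your proof is correct and takes essentially the same approach as the paper: both directions are local color-forcing arguments on the five vertices of the configuration, and your $H_1$ case coincides with the paper's. The differences lie only in which auxiliary triples you force. In the $H_2$ case you determine $c(ade)=1$ and $c(abe)=3$ and close with the rainbow $F_5$ $\{abe,ade,bcd\}$, whereas the paper applies its forcing claim three times around the $5$-cycle. In the converse the paper needs only the two auxiliary triples $acd$ and $abe$, and your step~3 computing $c(ade)$ is never used afterwards.
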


\begin{proof}
    Assume that $H$ is rainbow $\{H_1,H_2\}$-free.
    We now prove that $H$ is rainbow $F_5$-free.
    Suppose to the contrary that $H$ contains a rainbow copy of $F_5$ with edges $uvx,uvy,xyz$.
    Since $H$ is rainbow $H_2$-free, if $c(uxy)\notin\{c(xyz),c(uvy)\}$, then $xyz,uxy,uvy$ form a rainbow $H_2$, a contradiction.
    Similarly, if $c(uxy)\notin\{c(xyz),c(uvx)\}$, then $xyz,uxy,uvx$ form a rainbow $H_2$.
    This proves that $c(uxy)=c(xyz)$.
    Since $H$ is rainbow $H_1$-free, if $c(uvz)\notin\{c(uvx),c(uvy)\}$, then $uvx,uvy,uvz$ form a rainbow $H_1$, a contradiction.
    However, $uxy,uvx,uvz$ form a rainbow $H_2$ if $c(uvz)=c(uvy)$ and $uxy,uvy,uvz$ form a rainbow $H_2$ if $c(uvz)=c(uvx)$, see \cref{fig:H1H2 to F5}.
    This shows that $H$ is rainbow $F_5$-free.
    \begin{figure}[H]
        \begin{center}
            \qquad
            \begin{tikzpicture}
                [               scale=1.0,
                color1/.style={draw, very thick, rounded corners, red, inner sep=12pt},
                color1'/.style={draw, very thick, rounded corners, red, inner sep=4pt},
                color2/.style={draw, very thick, densely dashed, rounded corners, blue, inner sep=10pt},
                color3/.style={draw, very thick, densely dotted, rounded corners, violet, inner sep=8pt},
                color4/.style={draw, very thick, dash dot, rounded corners, teal, inner sep=8pt},
                ]
                \node (z) at (-2,0) {};
                \node (v) at (-1,0) {};
                \node (u) at (0,0) {};
                \node (x) at (1,1) {};
                \node (y) at (1,-1) {};
                \fill (z) circle (0.1) node[below] {$z$};
                \fill (v) circle (0.1) node[below] {$v$};
                \fill (u) circle (0.1) node[below] {$u$};
                \fill (x) circle (0.1) node[below] {$x$};
                \fill (y) circle (0.1) node[below] {$y$};
                \node [color1, fit=(u) (v) (x)] {};
                \node [color2, fit=(u) (v) (y)] {};
                \node [color3, fit=(u) (x) (y)] {};
                \node [color4, fit=(u) (v) (z)] {};
                \draw[very thick, rounded corners, red, inner sep=1pt] (2.4,2.2) rectangle (3.1,2.5);
                \node (legend0) at (3.7,2.35) {$c(uvx)$};
                \draw[very thick, densely dashed, rounded corners, blue, inner sep=1pt] (2.4,1.7) rectangle (3.1,2.0);
                \node (legend1) at (3.7,1.85) {$c(uvy)$};
                \draw[very thick, densely dotted, rounded corners, violet, inner sep=1pt] (2.4,1.2) rectangle (3.1,1.5);
                \node (legend2) at (3.7,1.35) {$c(xyz)$};
                \draw[very thick, dash dot, rounded corners, teal, inner sep=1pt] (-3.5,2.2) rectangle (-2.8,2.5);
                \node (legend4) at (-1.3,2.35) {$c(uvx)$ or $c(uvy)$};
            \end{tikzpicture}
        \end{center}
        \caption{}
        \label{fig:H1H2 to F5}
    \end{figure}

    Assume that $H$ is rainbow $F_5$-free.
    We now prove that $H$ is rainbow $\{H_1,H_2\}$-free.
    Suppose that $H$ contains a rainbow copy of $H_1$ with edges $f_1\coloneqq abc,f_2\coloneqq abd$ and $f_3\coloneqq abe$.
    Suppose that the color of $cde$ is $\gamma$.
    If $\gamma=c(f_i)$, say $i=1$, for some $i$, then $f_2,f_3,cde$ form a rainbow copy of $F_5$, a contradiction.
    If $\gamma\ne c(f_i)$ for $i=1,2,3$, then $f_2,f_3,cde$ again form a rainbow copy of $F_5$.
    Hence $H$ is rainbow $H_1$-free.

    Suppose that $H$ contains a rainbow copy of $H_2$ with edges $e_1 \coloneqq x_1x_2x_3$, $e_2\coloneqq x_2x_3x_4$ and $e_3\coloneqq  x_3x_4x_5$.
    Let us establish the following claim to get a contradiction.
    \begin{claim}\label{clm:5 cycle}
        We have $c(x_4x_5x_1)=c(e_1)$.
    \end{claim}
    \begin{proof}
        Denote the edge $x_4x_5x_1$ by $e_4$.
        If $c(e_4)\notin\{c(e_1),c(e_3)\}$, then $e_1,e_3,e_4$ form a rainbow $F_5$, a contradiction.
        If $c(e_4)\notin\{c(e_1),c(e_2)\}$, then $e_1,e_2,e_4$ form a rainbow $F_5$, a contradiction again.
        This shows that $c(e_4)=c(e_1)$ and proves the claim.
    \end{proof}
    \cref{clm:5 cycle} shows that $c(x_4x_5x_1)=c(e_1)$, and hence $x_2x_3x_4,x_3x_4x_5,x_4x_5x_1$ form a rainbow copy of $H_2$.
    Applying \cref{clm:5 cycle} to this rainbow $H_2$ implies that $c(x_5x_1x_2)=c(x_2x_3x_4)$, and hence $x_3x_4x_5,x_4x_5x_1,x_5x_1x_2$ form a rainbow $H_2$.
    Applying \cref{clm:5 cycle} again shows that $c(e_1)=c(e_3)$, which is impossible since $e_1,e_2,e_3$ form a rainbow $H_2$, see \cref{fig:5 cycle}.
\end{proof}
\begin{figure}[H]
    \centering
    \qquad\qquad\qquad
    \begin{tikzpicture}
        [               scale=0.85,
        color1/.style={draw, very thick, rounded corners, red, inner sep=12pt},
        color1'/.style={draw, very thick, rounded corners, red, inner sep=4pt},
        color2/.style={draw, very thick, densely dashed, rounded corners, blue, inner sep=10pt},
        color3/.style={draw, very thick, densely dotted, rounded corners, violet, inner sep=8pt},
        color4/.style={draw, thick, dash dot, rounded corners, yellow, inner sep=4pt},
        ]
        \node (x11) at (-4.5,2) {};
        \node (x21) at (-4.5,1) {};
        \node (x31) at (-4.5,0) {};
        \node (x41) at (-4.5,-1) {};
        \node (x51) at (-4.5,-2) {};
        \fill (x11) circle (0.1) node[below] {$x_1$};
        \fill (x21) circle (0.1) node[below] {$x_2$};
        \fill (x31) circle (0.1) node[below] {$x_3$};
        \fill (x41) circle (0.1) node[below] {$x_4$};
        \fill (x51) circle (0.1) node[below] {$x_5$};
        \node [color1, fit=(x11) (x21) (x31)] {};
        \node [color2, fit=(x21) (x31) (x41)] {};
        \node [color3, fit=(x31) (x41) (x51)] {};
        \node (x22) at (-1.5,2) {};
        \node (x32) at (-1.5,1) {};
        \node (x42) at (-1.5,0) {};
        \node (x52) at (-1.5,-1) {};
        \node (x12) at (-1.5,-2) {};
        \fill (x22) circle (0.1) node[below] {$x_2$};
        \fill (x32) circle (0.1) node[below] {$x_3$};
        \fill (x42) circle (0.1) node[below] {$x_4$};
        \fill (x52) circle (0.1) node[below] {$x_5$};
        \fill (x12) circle (0.1) node[below] {$x_1$};
        \node [color1, fit=(x42) (x52) (x12)] {};
        \node [color2, fit=(x22) (x32) (x42)] {};
        \node [color3, fit=(x32) (x42) (x52)] {};
        \node (x33) at (1.5,2) {};
        \node (x43) at (1.5,1) {};
        \node (x53) at (1.5,0) {};
        \node (x13) at (1.5,-1) {};
        \node (x23) at (1.5,-2) {};
        \fill (x23) circle (0.1) node[below] {$x_2$};
        \fill (x33) circle (0.1) node[below] {$x_3$};
        \fill (x43) circle (0.1) node[below] {$x_4$};
        \fill (x53) circle (0.1) node[below] {$x_5$};
        \fill (x13) circle (0.1) node[below] {$x_1$};
        \node [color1, fit=(x43) (x53) (x13)] {};
        \node [color2, fit=(x53) (x13) (x23)] {};
        \node [color3, fit=(x33) (x43) (x53)] {};
        \node (x44) at (4.5,2) {};
        \node (x54) at (4.5,1) {};
        \node (x14) at (4.5,0) {};
        \node (x24) at (4.5,-1) {};
        \node (x34) at (4.5,-2) {};
        \fill (x24) circle (0.1) node[below] {$x_2$};
        \fill (x34) circle (0.1) node[below] {$x_3$};
        \fill (x44) circle (0.1) node[below] {$x_4$};
        \fill (x54) circle (0.1) node[below] {$x_5$};
        \fill (x14) circle (0.1) node[below] {$x_1$};
        \node [color1, fit=(x44) (x54) (x14)] {};
        \node [color2, fit=(x54) (x14) (x24)] {};
        \node [color3, fit=(x14) (x24) (x34)] {};
        \node (a1) at (-3,0) {$\Longrightarrow$};
        \node (a2) at (0,0) {$\Longrightarrow$};
        \node (a3) at (3,0) {$\Longrightarrow$};
        \draw[very thick, rounded corners, red, inner sep=1pt] (5.5,2.2) rectangle (6.2,2.5);
        \node (legend0) at (6.7,2.35) {$c(e_1)$};
        \draw[very thick, densely dashed, rounded corners, blue, inner sep=1pt] (5.5,1.7) rectangle (6.2,2.0);
        \node (legend1) at (6.7,1.85) {$c(e_2)$};
        \draw[very thick, densely dotted, rounded corners, violet, inner sep=1pt] (5.5,1.2) rectangle (6.2,1.5);
        \node (legend2) at (6.7,1.35) {$c(e_3)$};
    \end{tikzpicture}
    \caption{}
    \label{fig:5 cycle}
\end{figure}

\section{Proof of \cref{thm:rainbow cancellative}}\label{sec: proof of 1}

Let $n$ and $p$ be integers with $n\ge p+1$ and $p\ge3$.
Let the vertex set of $H' \coloneqq K_n^{(p)}$ be $V=\{v_1,\ldots,v_n\}$.
Let us color the edges of $H'$ by the following rules:
\[
    c(v_{ip+1}v_{ip+2}\ldots v_{(i+1)p})=i+1
\]
for $i=0,\ldots,\floor{n/p}-1$ and $c(S)=\floor{n/p}+1$ for the remaining $p$-tuples $S$.
It is easy to see that $H'$ is rainbow cancellative under the coloring defined above, and hence
\begin{equation}\label{eq:lower bound}
    \ar(n,\cF^{(p)})\ge 1+\floor{n/p}.
\end{equation}

Let $H$ be an $n$-vertex edge-colored complete $p$-graph with $c(H)=\ar(n,\cF^{(p)})$.
In the rest of this section, we will prove that $c(H)=1+\floor{n/p}$ and there exists a vertex subset $U$ with $|U|=p\cdot \floor{n/p}$ such that $H[U]$ contains $\floor{n/p}$ pairwise vertex-disjoint edges that receive pairwise distinct colors, and the remaining edges in $H[U]$ are all colored by one additional color.

To this end, let us first establish several claims.

\medskip

\begin{claim}
    $H$ contains a rainbow copy of $S_{p-1,2}^{(p)}$.
\end{claim}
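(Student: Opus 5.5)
The claim asks for two edges $X\cup\{x\}$ and $X\cup\{y\}$ with $|X|=p-1$, $x\ne y$, carrying different colors. So we only need two edges of different colors that share $p-1$ vertices, and we will find them by a connectivity argument on the Johnson graph.

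First we record that $c(H)\ge 2$. By \eqref{eq:lower bound} and the maximality of $c(H)$, we have $c(H)\ge 1+\floor{n/p}$. Since $n\ge p+1$, we get $\floor{n/p}\ge 1$, and hence $c(H)\ge 2$. So there are edges $A$ and $B$ of $H$ with $c(A)\ne c(B)$.

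Next consider the auxiliary graph $J$ whose vertices are the $p$-subsets of $V$, two being adjacent when they share exactly $p-1$ elements; this is the Johnson graph $J(n,p)$, and it is connected. Connectivity can be shown directly. Given $A\ne B$, pick $a\in A\setminus B$ and $b\in B\setminus A$ and replace $A$ by $(A\setminus\{a\})\cup\{b\}$. This step is a move along an edge of $J$ and decreases $|A\setminus B|$ by one. Iterating it gives a path $A=E_0,E_1,\dots,E_k=B$ in $J$.

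Since $c(E_0)\ne c(E_k)$, some consecutive pair satisfies $c(E_{i})\ne c(E_{i+1})$. Put $X\coloneqq E_i\cap E_{i+1}$, so that $|X|=p-1$, and write $E_i=X\cup\{x\}$ and $E_{i+1}=X\cup\{y\}$ with $x\ne y$. Then $Q=X$, $P_1=\{x\}$ and $P_2=\{y\}$ are pairwise disjoint with the sizes required in the definition of $S_{p-1,2}^{(p)}$. These two edges therefore form a copy of $S_{p-1,2}^{(p)}$ in $H$, and it is rainbow because its two edges have distinct colors.

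The argument has no real obstacle. The only point needing care is that $c(H)\ge2$, which uses the lower bound \eqref{eq:lower bound} together with $n\ge p+1$.
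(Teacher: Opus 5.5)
Your proposal is correct and is essentially the paper's argument run directly instead of by contradiction. The paper assumes every pair of edges sharing $p-1$ vertices has the same color and chains one-vertex swaps to force $c(H)=1$, contradicting \eqref{eq:lower bound}. Your Johnson-graph path with a consecutive color change is the same swap-chain, and you make explicit that $n\ge p+1$ gives $c(H)\ge 2$.
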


\begin{proof}
    If the claim does not hold, then
    \begin{equation}\label{eq:assumption 1}
        c(S\cup \{s\})=c(S\cup \{s'\})
    \end{equation}
    for every $S\subseteq V$ with $|S|=p-1$ and every $s,s'\in V\setminus S$.

    If $f_1 \coloneqq x_1,\ldots,x_p$ and $f_2 \coloneqq y_1,\ldots,y_p$ are two distinct edges with
    \[
        |\{x_1,\ldots,x_p\}\cap\{y_1,\ldots,y_p\}|=i
    \]
    for some $i$ with $0\le i\le p-1$, then we may assume that $f_1=t_1\cdots t_i s_{i+1}\cdots s_p$ and $f_2=t_1\cdots t_i s_{i+1}'\cdots s_p'$.
    By \cref{eq:assumption 1}, we have
    \[
        c(t_1\cdots t_{i} s_{i+1}\cdots s_p)=c(t_1\cdots t_{i} s_{i+1}\cdots s_p')=\cdots=c(t_1\cdots t_{i} s_{i+1}'\cdots s_p').
    \]
    This shows that $c(f_1)=c(f_2)$, and hence $c(H)=1<1+\floor{n/p}$, which contradicts \cref{eq:lower bound}.
    This completes the proof of the claim.
\end{proof}

Let us choose a rainbow copy of $S_{p-1,2}^{(p)}$ and let the two edges in this rainbow $S_{p-1,2}^{(p)}$ be $X\cup\{t_1\}$ and $X\cup\{t_2\}$, where $X$ is a set with $|X|=p-1$.
Denote $c(X\cup\{t_i\})$ by $\alpha_i$ for $i=1,2$.
By \cref{lem:rainbow triple}, $H$ is rainbow $S_{p-1,3}^{(p)}$-free, and hence the set $V(H)\setminus X$ can be partitioned into two sets $A_1,A_2$ such that $c(X\cup\{x_i\})=\alpha_i$ for every $x_i\in A_i$ and $i=1,2$.
In the rest of the proof, we will prove several claims concerning the colors of some edges.

\begin{claim}\label{clm:crossing color}
    If $f$ is an edge with $f\cap A_1\ne\emptyset$ and $f\cap A_2\ne\emptyset$, then $f$ is colored by $\alpha_1$ or $\alpha_2$.
\end{claim}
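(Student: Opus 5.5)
The claim follows in one step from the definition of rainbow cancellativity, applied to the two star edges through a vertex of $A_1$ and a vertex of $A_2$ that both lie in $f$.

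Let $f$ be an edge with $f\cap A_1\ne\emptyset$ and $f\cap A_2\ne\emptyset$, and pick $a_1\in f\cap A_1$ and $a_2\in f\cap A_2$. By the choice of the partition $V(H)\setminus X=A_1\cup A_2$, the edges $e_1\coloneqq X\cup\{a_1\}$ and $e_2\coloneqq X\cup\{a_2\}$ have colors $c(e_1)=\alpha_1$ and $c(e_2)=\alpha_2$. These colors are distinct, because they are the colors of the two edges of the chosen rainbow $S_{p-1,2}^{(p)}$.

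Since $a_1$ and $a_2$ both lie outside $X$, we get $e_1\triangle e_2=\{a_1,a_2\}\subseteq f$. Next we check that the three edges $e_1,e_2,f$ are pairwise distinct. The vertices $a_1,a_2$ are distinct because $A_1\cap A_2=\emptyset$, so $e_1\ne e_2$. The edge $e_1$ does not contain $a_2$ while $f$ does, so $f\ne e_1$; symmetrically, $f\ne e_2$.

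Hence $\{e_1,e_2,f\}$ is a member of $\cF^{(p)}$, with $e_1\triangle e_2\subseteq f$. The coloring $H$ is rainbow cancellative, so these three edges cannot receive pairwise distinct colors. Because $c(e_1)=\alpha_1\ne\alpha_2=c(e_2)$, it follows that $c(f)\in\{\alpha_1,\alpha_2\}$, which proves the claim.

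There is no real obstacle in this argument. The only points needing care are the distinctness of the three edges and the fact that $\alpha_1\ne\alpha_2$, and both are verified above.
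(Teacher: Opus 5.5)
Your proof is correct and is essentially the paper's own argument: pick $a_i\in f\cap A_i$, note $c(X\cup\{a_i\})=\alpha_i$ and $(X\cup\{a_1\})\triangle(X\cup\{a_2\})=\{a_1,a_2\}\subseteq f$, and apply rainbow cancellativity. Your explicit check that the three edges are distinct and that $\alpha_1\ne\alpha_2$ is a small addition the paper leaves implicit.
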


\begin{proof}
    Let $f$ be an edge with $f\cap A_1\ne\emptyset$ and $f\cap A_2\ne\emptyset$.
    For $i=1,2$, let $a_i$ be a vertex in $f\cap A_i$ and let $f_i=X\cup\{a_i\}$.
    It follows from the definition of $A_i$ that $c(f_i)=\alpha_i$.
    It is easy to see that $f_1\triangle f_2=\{a_1,a_2\}\subseteq f$.
    Since $H$ is rainbow cancellative, we have $c(f)\in \{\alpha_1,\alpha_2\}$.
    This proves the claim.
\end{proof}

\begin{claim}\label{clm:X two colors}
    If $f$ is an edge with $X\cap f\ne\emptyset$, then $f$ is colored by $\alpha_1$ or $\alpha_2$.
\end{claim}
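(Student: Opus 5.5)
We would prove the claim by induction on $k \coloneqq |f\setminus X|$, working throughout with the partition $V(H)\setminus X=A_1\cup A_2$ fixed above. The tools are \cref{clm:crossing color} and the basic consequence of rainbow cancellativity: whenever $A\triangle B\subseteq C$ for three distinct edges, two of $c(A),c(B),c(C)$ coincide.

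If $k=1$ then $f=X\cup\{a\}$, and $c(f)=\alpha_i$ by the definition of $A_i$. For $k\ge2$, we may assume $f\setminus X$ lies entirely in one part, say $A_1$; otherwise \cref{clm:crossing color} already gives the conclusion. Since $k\ge2$, the set $X\setminus f$ is nonempty, and $f\cap X\ne\emptyset$ by hypothesis. We would use two auxiliary triples.

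\textbf{Triple with $f'$.} Choose $a\in f\setminus X$ and $x'\in X\setminus f$, and set $f'=(f\setminus\{a\})\cup\{x'\}$. Then $|f'\setminus X|=k-1$ and $f'\cap X\neq\emptyset$, so $c(f')\in\{\alpha_1,\alpha_2\}$ by induction. Moreover $f\triangle f'=\{a,x'\}\subseteq X\cup\{a\}$, and $X\cup\{a\}$ has colour $\alpha_1$. Cancellativity therefore gives $c(f)=c(f')$, or $c(f)=\alpha_1$, or $c(f')=\alpha_1$. In the first two cases we are done.

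\textbf{Triple with $g$.} Choose $x\in f\cap X$ and $a_2\in A_2$, and set $g=(f\setminus\{x\})\cup\{a_2\}$. Since $k\ge2$, $g$ still meets $A_1$, so $g$ is a crossing edge and $c(g)\in\{\alpha_1,\alpha_2\}$ by \cref{clm:crossing color}. Also $f\triangle g=\{x,a_2\}\subseteq X\cup\{a_2\}$, which has colour $\alpha_2$. Hence $c(f)=c(g)$, or $c(f)=\alpha_2$, or $c(g)=\alpha_2$. If $c(g)=\alpha_1$, then either $c(f)=c(g)=\alpha_1$ or $c(f)=\alpha_2$, and we are done.

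The remaining bad case is $c(f')=\alpha_1$ and $c(g)=\alpha_2$ for every admissible choice of $a,x'$ and $x,a_2$. This is the main obstacle.

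To handle it, we would first exploit the freedom in the choices of $a$, $x'$, $x$ and $a_2$, aiming to produce some $f'$ of colour $\alpha_2$ or some $g$ of colour $\alpha_1$. Failing that, we would derive a contradiction by looking for a rainbow triple among $f$, $f'$, $g$ and their neighbours. A natural candidate is $h=(f'\setminus\{x\})\cup\{a_2\}$ when $x\neq x'$; this is again a crossing edge when $k\ge3$. In the tight case $k=2$ the same role would be played by $X\cup\{a_2\}$.

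The goal in this case would be either to exhibit three pairwise differently coloured edges whose pairwise symmetric differences lie in a fourth edge, contradicting \cref{lem:rainbow triple}, or to show that the fourth edge must avoid all three colours, which contradicts cancellativity directly. We expect that pinning down exactly which auxiliary edge closes this last case, including the smallest case $k=2$, will require the most care.
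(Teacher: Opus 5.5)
Your setup matches the paper's. Inducting on $k=|f\setminus X|$ is the paper's induction on $|X\setminus f_0|=k-1$, and your $f'$ and $g$ are exactly the paper's $g_2$ and $g_1$. The gap is that you never close the case $c(f')=\alpha_1$, $c(g)=\alpha_2$.

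Your first suggested remedy, rechoosing $a,x',x,a_2$, cannot work. Suppose $c(f)=\alpha_0\notin\{\alpha_1,\alpha_2\}$. Cancellativity with $X\cup\{a_2\}$ and \cref{clm:crossing color} give $c(g)\in\{\alpha_0,\alpha_2\}\cap\{\alpha_1,\alpha_2\}=\{\alpha_2\}$. Cancellativity with $X\cup\{a\}$ and the induction hypothesis give $c(f')=\alpha_1$. Both hold for \textit{every} admissible choice. So the ``bad case'' is not a residue to be avoided; it is exactly what has to be refuted, and the proposal stops there.

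For $p\ge 4$ the missing step is immediate from $f,f',g$ themselves. With $c(f)=\alpha_0$ these three edges are rainbow, and $f\triangle f'=\{a,x'\}$, $f\triangle g=\{x,a_2\}$, $f'\triangle g=\{a,x,x',a_2\}$. The union of the pairwise symmetric differences therefore has size $4\le p$, and \cref{lem:rainbow triple} gives a contradiction. This is the paper's case $s\le(p-2)/2$. In your parametrisation it works for every $k\ge 2$ once $p\ge4$, so you would not need the paper's separate base case $s\ge p/2$.

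For $p=3$, where only $k=2$ occurs, that union is too large and a different configuration is needed. So the real obstruction is $p=3$, not $k=2$ as such. The fix uses your $h$ (already a crossing edge when $k=2$) together with both $X\cup\{a\}$ and $X\cup\{a_2\}$:
\begin{enumerate}
\item Write $X=\{x,x'\}$, $f=\{x,a,b\}$ with $a,b\in A_1$, and $h=\{x',b,a_2\}$.
\item Since $f\triangle(X\cup\{a\})=\{x',b\}\subseteq h$, cancellativity gives $c(h)\in\{\alpha_0,\alpha_1\}$. By \cref{clm:crossing color}, $c(h)=\alpha_1$.
\item Then $(X\cup\{a_2\})\triangle h=\{x,b\}\subseteq f$, and the colors $\alpha_2,\alpha_1,\alpha_0$ are pairwise distinct. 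This contradicts rainbow cancellativity.
\end{enumerate}
This is the paper's $f^*$ argument for $s=(p-1)/2$, specialised to $p=3$. With these two arguments inserted, your proof would be complete.
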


\begin{proof}
    Let $f_0$ be an edge with $X\cap f_0\ne\emptyset$.
    Denote $c(f_0)$ by $\alpha_0$.
    Denote $f_0\cap X$ by $X_1$ and denote $|X_1|$ by $s$.
    Let us prove that $\alpha_0\in \{\alpha_1,\alpha_2\}$ by induction on $|X\setminus f_0|$, that is, $p-1-s$.
    If $f_0\cap A_1\ne\emptyset$ and $f_0\cap A_2\ne\emptyset$, then the result follows from \cref{clm:crossing color}.
    Hence, we only need to consider the case where one of $f_0\cap A_1$ and $f_0\cap A_2$ is empty.
    Without loss of generality, suppose that $f_0\cap A_2=\emptyset$, and hence $f_0\subseteq A_1\cup X$.

    Let $a_1$ be a vertex in $A_1\cap f_0$ and $a_2$ be a vertex in $A_2$.
    Let $f_i=X\cup\{a_i\}$ for $i=1,2$.
    It follows directly from the definition of $A_i$ that $c(f_i)=\alpha_i$ for $i=1,2$.
    Note that $a_1\in A_1\cap f_0$, and it is easy to see that
    \[
        \bigcup_{0\le i<j\le 2}f_i\triangle f_j=(X\setminus X_1)\cup (f_0\cap A_1)\cup\{a_2\}.
    \]
    Hence, if $s\ge p/2$, then $|X\setminus f_0|=p-1-s\le (p-2)/2$, and hence
    \[
        \left|\bigcup_{0\le i<j\le 2}f_i\triangle f_j\right|=2p-2|X_1|\le p.
    \]
    If $\alpha_0\notin\{\alpha_1,\alpha_2\}$, then $f_0,f_1,f_2$ form a rainbow copy of some $T\in \cT^{(p)}$, which contradicts \cref{lem:rainbow triple}.
    This shows that $\alpha_0=\alpha_i$ for some $i=1,2$ when $s\ge p/2$, and hence proves the basic step of the induction.

    Now let us assume that $s\le (p-1)/2$, and hence $|X\setminus f_0|\ge (p-1)/2$.
    By induction, we suppose that
    \begin{equation}\label{equ:suppose}
        \text{every edge $f$ with $|X\setminus f|\le p-2-s$ is colored by $\alpha_1$ or $\alpha_2$.}
    \end{equation}
    Recall that $X_1=f_0\cap X$ and $|X_1|=s\le (p-1)/2$.
    Let $X_2$ be a subset of $X\setminus X_1$ with $|X_2|=|X_1|$ and $X_3=X\setminus(X_1\cup X_2)$.
    Let $C_1$ be a subset of $(f_0\cap A_1)\setminus\{a_1\}$ with $|C_1|=p-1-2s$ and denote
    \[
        (f_0\cap A_1)\setminus(\{a_1\}\cup C_1)
    \]
    by $C_2$.
    It is clear that $|C_1|=p-1-2s$ and $|C_2|=s$.

    If $s=(p-1)/2$, then $p$ is odd and $C_1=X_3=\emptyset$.
    Note that $f_0=X_1\cup C_2\cup\{a_1\}$ and $f_i=X_1\cup X_2\cup\{a_i\}$ for $i=1,2$ in this case and $c(f_0)=\alpha_0$.
    Suppose to the contrary that $\alpha_0\notin\{\alpha_1,\alpha_2\}$.
    Note that $c(f_1)=\alpha_1$ and $f_0\triangle f_1=X_2\cup C_2$.
    Denote the edge $X_2\cup C_2\cup\{a_2\}$ by $f^*$.
    Since $H$ is rainbow cancellative, the color of $f^*$ is either $\alpha_0$ or $\alpha_1$.
    Since $f^*\cap A_1\ne\emptyset$ and $f^*\cap A_2\ne\emptyset$, by \cref{clm:crossing color}, the color of $f^*$ is either $\alpha_1$ or $\alpha_2$, and hence $c(f^*)=\alpha_1$.
    Note that $c(f_2)=\alpha_2$ and
    \[
        f_2\triangle f^*=X_1\cup C_2\subseteq f_0,
    \]
    which contradicts the choice of $H$, see Figure~\ref{fig:half half}.
    Hence, we have $\alpha_0=\alpha_i$ for some $i\in\{1,2\}$.
    \begin{figure}[H]
        \centering
        \qquad\qquad\quad
        \begin{tikzpicture}
            [
            color1/.style={draw, very thick, rounded corners, red, inner sep=10pt},
            color1'/.style={draw, very thick, rounded corners, red, inner sep=4pt},
            color2/.style={draw, very thick, densely dashed, rounded corners, blue, inner sep=8pt},
            color3/.style={draw, very thick, densely dotted, rounded corners, violet, inner sep=6pt},
            color4/.style={draw, thick, dash dot, rounded corners, yellow, inner sep=4pt},
            ]

            \node (X1) at (-1,0) {$X_1$};
            \node (X2) at (1,0) {$X_2$};
            \node (a1) at (-2,0) {$a_1$};
            \node (a2) at (2,0) {$a_2$};
            \node (C2) at (0,1) {$C_2$};
            \node [color1, fit=(X1) (X2) (a1)] {};
            \node [color2, fit=(X1) (X2) (a2)] {};
            \node [color3, fit=(X1) (C2) (a1)] {};
            \node [color1', fit=(X2) (C2) (a2)] {};
            \draw[very thick, rounded corners, red, inner sep=10pt] (3.5,1.2) rectangle (4.2,1.5);
            \node (legend0) at (4.5,1.35) {$\alpha_0$};
            \draw[very thick, densely dashed, rounded corners, blue, inner sep=1pt] (3.5,0.7) rectangle (4.2,1.0);
            \node (legend1) at (4.5,0.85) {$\alpha_1$};
            \draw[very thick, densely dotted, rounded corners, violet, inner sep=1pt] (3.5,0.2) rectangle (4.2,0.5);
            \node (legend2) at (4.5,0.35) {$\alpha_2$};
        \end{tikzpicture}
        \caption{}
        \label{fig:half half}
    \end{figure}

    Now let us assume that $s\le (p-2)/2$, and hence $p-1-2s\ge1$ and $X_3\ne\emptyset$.
    Recall that $f_0=X_1\cup C_1\cup C_2\cup\{a_1\}$ and $f_i=X_1\cup X_2\cup X_3\cup\{a_i\}$ for $i=1,2$ in this case.
    Suppose to the contrary that $\alpha_0\notin\{\alpha_1,\alpha_2\}$.
    Let $x_1$ be a vertex in $X_1$ and $x_2$ be a vertex in $X_2$.
    Denote the edge $(f_0\cup\{a_2\})\setminus\{x_1\}$ by $g_1$
    and $(f_0\cup\{x_2\})\setminus\{a_1\}$ by $g_2$.
    Note that $g_1\triangle f_0=\{x_1,a_2\}\subseteq f_2$ and $c(f_2)=\alpha_2$.
    Since $H$ is rainbow cancellative, we have $c(g_1)=\alpha_0$ or $c(g_1)=\alpha_2$.
    By \cref{clm:crossing color}, we conclude that $c(g_1)\in\{\alpha_1,\alpha_2\}$, and hence $c(g_1)=\alpha_2$.
    Since $g_2\triangle f_0=\{x_2,a_1\}\subseteq f_1$, we have $c(g_2)\in\{\alpha_0,\alpha_1\}$.
    Note that $|X\setminus g_2|=p-2-s$, so by the induction hypothesis \eqref{equ:suppose}, we have $c(g_2)\in\{\alpha_1,\alpha_2\}$, and therefore $c(g_2)=\alpha_1$.
    Since $1\le s\le (p-2)/2$, we have $p\ge4$.
    It is easy to verify that
    \[
        |(g_1\triangle g_2) \cup (g_1\triangle f_0) \cup (g_2 \triangle f_0)| = |\{x_1,x_2,a_1,a_2\}| = 4 \le p,
    \]
    and hence $f_0,g_1,g_2$ form a rainbow copy of some $T\in \cT^{(p)}$, which contradicts \cref{lem:rainbow triple}.
    This shows that $c(f_0)\in\{\alpha_1,\alpha_2\}$.
\end{proof}

\begin{remark}\label{rem:core-two-colors}
    The proof of \cref{clm:X two colors} only uses that $X$ is a $(p-1)$-set for which there exist
    two edges $X\cup\{t_1\},X\cup\{t_2\}$ of distinct colors.
    Hence the same conclusion holds with $X$ replaced by the core of any rainbow copy of $S_{p-1,2}^{(p)}$.
    We will use this fact repeatedly.
\end{remark}
For an integer $i$ with $1\le i\le p-1$, we say the property $\PP_i$ is satisfied if $H[A_1]\cup H[A_2]$ does not contain rainbow $S^{(p)}_{p-i,2}$ whose edges all have colors distinct from $\alpha_1$ and $\alpha_2$.

\begin{claim}\label{clm:disjoint}
    For $i=1,\ldots, p-1$, the property $\PP_i$ is satisfied.
\end{claim}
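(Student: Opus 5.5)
The plan is to argue directly for every $i$ at once, without induction on $i$, using \cref{rem:core-two-colors} as the main tool. Suppose to the contrary that $\PP_i$ fails for some $1\le i\le p-1$. Then there is a rainbow copy of $S^{(p)}_{p-i,2}$ with edges $e_1=Q\cup P_1$ and $e_2=Q\cup P_2$. Here $|Q|=p-i\ge1$, $|P_1|=|P_2|=i$, and $Q,P_1,P_2$ are pairwise disjoint. Both edges lie in $H[A_1]$ or in $H[A_2]$; since a core exists they share $Q$, so they lie in the same part, say $A_1$ by symmetry. Their colors are $\beta_1\coloneqq c(e_1)$ and $\beta_2\coloneqq c(e_2)$, with $\beta_1\neq\beta_2$ and $\beta_1,\beta_2\notin\{\alpha_1,\alpha_2\}$.

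First I walk from $e_1$ to $e_2$ by single-vertex swaps. Write $P_1=\{u_1,\dots,u_i\}$ and $P_2=\{v_1,\dots,v_i\}$. For $0\le k\le i$ put
\[
h_k\coloneqq Q\cup\{v_1,\dots,v_k\}\cup\{u_{k+1},\dots,u_i\},
\]
so that $h_0=e_1$ and $h_i=e_2$. Every $h_k$ is contained in $A_1$, and consecutive edges $h_k,h_{k+1}$ share the $(p-1)$-set
\[
Y_k\coloneqq Q\cup\{v_1,\dots,v_k\}\cup\{u_{k+2},\dots,u_i\}.
\]
Since $c(h_0)=\beta_1\neq\beta_2=c(h_i)$, there is an index $k$ with $c(h_k)\neq c(h_{k+1})$. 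Then $h_k,h_{k+1}$ form a rainbow $S^{(p)}_{p-1,2}$ with core $Y_k$.

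By \cref{rem:core-two-colors}, which is the conclusion of \cref{clm:X two colors} applied to the core $Y_k$, every edge meeting $Y_k$ is colored by $c(h_k)$ or $c(h_{k+1})$. So every edge meeting $Y_k$ takes one of at most two colors. I then exhibit three edges meeting $Y_k$ with three distinct colors:
\begin{itemize}
\item for any $y\in Q\subseteq Y_k\cap A_1$, the edge $X\cup\{y\}$ meets $Y_k$ and has color $\alpha_1$ by the definition of $A_1$;
\item $e_1$ meets $Y_k$, since $Q\neq\emptyset$ is contained in both, and $e_1$ has color $\beta_1$;
\item $e_2$ meets $Y_k$ for the same reason and has color $\beta_2$.
\end{itemize}
The colors $\alpha_1,\beta_1,\beta_2$ are pairwise distinct, because $\beta_1\neq\beta_2$ and $\beta_1,\beta_2\neq\alpha_1$. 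This contradicts the two-color conclusion, so $\PP_i$ holds.

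The only delicate points are the following checks, which I would verify carefully:
\begin{itemize}
\item $Q$ is nonempty, which is exactly where $i\le p-1$ is used;
\item $Y_k\subseteq A_1$, so that $X\cup\{y\}$ is a genuine edge of color $\alpha_1$ (it is a $p$-set because $y\notin X$);
\item \cref{rem:core-two-colors} applies to an arbitrary $(p-1)$-set carrying two differently colored extensions, not only to the original $X$.
\end{itemize}
The first two are immediate from the construction. The third is exactly the content of \cref{rem:core-two-colors}, since the proof of \cref{clm:X two colors} used only that property of $X$ together with the induced partition of $V\setminus X$.
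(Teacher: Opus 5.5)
Your argument is correct, and it takes a genuinely different route from the paper's. The paper proceeds by induction on $i$. For $i=1$ it applies \cref{clm:X two colors} to $X$ and, via \cref{rem:core-two-colors}, to the new core $f_3\cap f_4$; an edge through both cores would then need a color in the disjoint sets $\{\alpha_1,\alpha_2\}$ and $\{c(f_3),c(f_4)\}$. For $i\ge2$ it replaces the same core vertex in each edge by a petal vertex of the other edge, uses $\PP_1$ to see that the new edges keep the colors $c(f_3),c(f_4)$, and thereby produces a violation of $\PP_{i-1}$.

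You skip the induction altogether. Your swap path $h_0,\dots,h_i$ only serves to locate some $(p-1)$-core $Y_k\supseteq Q$ with two differently colored extensions. You never need to know what $c(h_k),c(h_{k+1})$ are, because $e_1$, $e_2$ and $X\cup\{y\}$ with $y\in Q$ already give three pairwise distinct colors $\beta_1,\beta_2,\alpha_1$ on edges through $Y_k$. This uses \cref{rem:core-two-colors} only once, and it replaces the application of \cref{clm:X two colors} to $X$ by the bare definition of $A_1$.

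As a by-product you prove slightly more: a rainbow $S^{(p)}_{p-i,2}$ inside $H[A_j]$ is already excluded when its two colors merely avoid $\alpha_j$. The paper's scheme instead keeps both colors outside $\{\alpha_1,\alpha_2\}$ at every stage, so that the final contradiction is the symmetric two-core one. The cost is running through the whole chain $\PP_1,\dots,\PP_{p-1}$.
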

\begin{proof}
    We will prove the claim by induction on $i$.
    When $i=1$, if the property $\PP_i$ is not satisfied, then $H[A_1]\cup H[A_2]$ contains a rainbow copy of $S^{(p)}_{p-1,2}$ whose edges are $f_3,f_4$ such that $\{c(f_3),c(f_4)\}\cap \{\alpha_1,\alpha_2\}=\emptyset$.
    Let $x$ be a vertex in $X$ and let $x'$ be a vertex in $f_3\cap f_4$.
    By applying \cref{clm:X two colors} to $X$ and $f_3\cap f_4$ respectively, we conclude that every edge incident to $\{x,x'\}$ is colored by $\{c(f_3),c(f_4)\}\cap \{\alpha_1,\alpha_2\}=\emptyset$, which is impossible.
    This proves the basic step for the induction.

    Let us assume that $i\ge2$ and the property $\PP_{j}$ is satisfied for every $j$ with $j\le i-1$.
    If the property $\PP_{i}$ is not satisfied, then $H[A_1]\cup H[A_2]$ has a rainbow copy of $S^{(p)}_{p-i,2}$ whose edges are $f_3=x_1\cdots x_{p-i} y_{p-i+1}\cdots y_p$ and $f_4=x_1\cdots x_{p-i} z_{p-i+1}\cdots z_p$ such that $\{c(f_3),c(f_4)\}\cap \{\alpha_1,\alpha_2\}=\emptyset$.
    Let us define
    \begin{align}
        f_3'=x_1\cdots x_{p-i-1}z_{p-i+1}y_{p-i+1}\cdots y_p
    \end{align}
    and
    \[
        f_4'=x_1\cdots x_{p-i-1}y_{p-i+1}z_{p-i+1}\cdots z_{p}
    \]
    It is easy to verify that $f_3'\triangle f_3=\{x_{p-i},z_{p-i+1}\}\subseteq f_4$, and hence $c(f_3')\in\{c(f_3),c(f_4)\}$.
    It can be checked easily that $|f_3\cap f_3'|=p-1$.
    Since the property $\PP_{1}$ is satisfied, we have $c(f_3')=c(f_3)$.
    Similarly, we conclude that $c(f_4')=c(f_4)$.
    Note that
    \begin{align}
        f_3'\cap f_4'=\{x_1,\ldots,x_{p-i-1},y_{p-i+1},z_{p-i+1}\},
    \end{align}
    and hence $|f_3'\cap f_4'|=p-(i-1)$.
    Note that $c(f_3')=c(f_3),c(f_4')=c(f_4)$ and $\{c(f_3),c(f_4)\}\cap \{\alpha_1,\alpha_2\}=\emptyset$.
    Hence, the property $\PP_{i-1}$ is not satisfied, which contradicts the induction hypothesis.
    This completes the proof of the induction and the proof of the claim.
\end{proof}

\begin{claim}\label{clm:one special}
    There exists some $i\in\{1,2\}$ such that every edge in $H[A_i]$ is colored by $\alpha_1$ or $\alpha_2$.
\end{claim}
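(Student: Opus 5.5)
The plan is to argue by contradiction, using \cref{rem:core-two-colors} to create two new ``cores'' in addition to $X$, and then to play the three cores against each other. Suppose the claim fails. Then there are edges $g_1\subseteq A_1$ and $g_2\subseteq A_2$ with $c(g_1)=\beta_1\notin\{\alpha_1,\alpha_2\}$ and $c(g_2)=\beta_2\notin\{\alpha_1,\alpha_2\}$; we allow $\beta_1=\beta_2$. Edges inside $A_1\cup X$ that meet $X$ are already handled by \cref{clm:X two colors}, so $g_1,g_2$ genuinely lie inside $A_1,A_2$.

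Fix $a_1\in g_1$ and set $Y_1\coloneqq g_1\setminus\{a_1\}$, so $|Y_1|=p-1$ and $Y_1\subseteq A_1$. Pick any $a_2\in A_2$ and consider $h_1\coloneqq Y_1\cup\{a_2\}$. This edge meets both $A_1$ and $A_2$, so by \cref{clm:crossing color} its color $\gamma_1\coloneqq c(h_1)$ lies in $\{\alpha_1,\alpha_2\}$, and hence $\gamma_1\neq\beta_1$. Thus $g_1,h_1$ form a rainbow $S^{(p)}_{p-1,2}$ with core $Y_1$. By \cref{rem:core-two-colors}, every edge meeting $Y_1$ has color in $\{\beta_1,\gamma_1\}$. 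Combined with \cref{clm:X two colors}, every edge meeting both $X$ and $Y_1$ has color in $\{\alpha_1,\alpha_2\}\cap\{\beta_1,\gamma_1\}=\{\gamma_1\}$.

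Next we pin down $\gamma_1$. For $y\in Y_1\subseteq A_1$, the edge $X\cup\{y\}$ meets both $X$ and $Y_1$, so its color is $\gamma_1$; by the definition of $A_1$ its color is also $\alpha_1$. Hence $\gamma_1=\alpha_1$. Running the symmetric argument with $g_2$, a vertex $b_2\in g_2$ and $Y_2\coloneqq g_2\setminus\{b_2\}\subseteq A_2$, we get that every edge meeting both $X$ and $Y_2$ is colored $\alpha_2$.

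Finally, since $p\ge3$ and $X,Y_1,Y_2$ are nonempty and pairwise disjoint, we can choose an edge $f$ containing one vertex from each of $X$, $Y_1$ and $Y_2$. Then $f$ must be colored both $\alpha_1$ and $\alpha_2$, which contradicts $\alpha_1\ne\alpha_2$.

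The only delicate point is verifying the hypotheses of \cref{rem:core-two-colors}. The remark needs just two edges $Y_1\cup\{t\}$ and $Y_1\cup\{t'\}$ of distinct colors. Here these are $g_1$ and $h_1$, whose colors $\beta_1$ and $\gamma_1$ differ because $\beta_1\notin\{\alpha_1,\alpha_2\}$ while $\gamma_1\in\{\alpha_1,\alpha_2\}$. Beyond this check, the argument is a short intersection of the color constraints coming from the three cores $X$, $Y_1$ and $Y_2$.
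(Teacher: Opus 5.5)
Your argument is correct, but it takes a different route from the paper's proof.

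\textbf{The paper's route.} The paper does not invoke \cref{rem:core-two-colors} here. It takes $h_i\subseteq A_i$, a vertex $b_i\in h_i$, $B_i=h_i\setminus\{b_i\}$ and $y\in X$, and proceeds in three steps.
\begin{enumerate}
\item The cancellative condition applied to $X\cup\{b_i\}$ and $h_i$ gives $c(\{y\}\cup B_i)\in\{\alpha_i,\beta_i\}$.
\item \cref{clm:X two colors}, used only for the original core $X$, then forces $c(\{y\}\cup B_i)=\alpha_i$.
\item The single edge $h_3=Y'\cup\{b_1,b_2\}$, with $y\in Y'\subseteq X$, contains both $\{y,b_1\}$ and $\{y,b_2\}$. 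Hence $c(h_3)\in\{\alpha_1,\beta_1\}\cap\{\alpha_2,\beta_2\}$, which is ruled out by a short case split on whether $\beta_1=\beta_2$.
\end{enumerate}

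\textbf{Comparison.} The pair $\{y\}\cup B_i$, $h_i$ is itself a rainbow $S^{(p)}_{p-1,2}$ with core $B_i$, which is your $Y_i$. So both proofs start from the same object. The paper extracts only one cancellative consequence from it. You instead apply the full conclusion of the remark to the new cores $Y_1,Y_2$, and then intersect the resulting constraints on one edge meeting $X$, $Y_1$ and $Y_2$.

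Your version is conceptually cleaner and needs no case distinction on $\beta_1,\beta_2$. The paper's version is more elementary: it uses \cref{clm:X two colors} only for $X$, and never needs that claim's inductive proof re-run for another core. The remark implicitly does that re-run. Relying on it is legitimate here, since the remark precedes this claim and the paper already uses it in \cref{clm:disjoint}.
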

\begin{proof}
    Suppose to the contrary that for $i=1,2$, there exists an edge $h_i\in H[A_i]$ with $\{c(h_i)\}\cap\{\alpha_1,\alpha_2\}=\emptyset$.
    For $i=1,2$, denote $c(h_i)$ by $\beta_i$ and let $b_i$ be a vertex in $h_i\cap A_i$.
    Denote $h_i\setminus\{b_i\}$ by $B_i$.
    Let $y$ be a vertex in $X$ and denote $X\setminus\{y\}$ by $Y$.
    Let $Y'$ be a subset of $X$ with $y\in Y'$ and $|Y'|=p-2$.

    Recall that $A_i$ is defined by $A_i=\{z\in V(H)\setminus X;c(X\cup\{z\})=\alpha_i\}$, and hence $c(Y\cup\{y,b_i\})=\alpha_i$.
    Note that
    \[
        (\{y\}\cup B_i) \triangle (\{b_i\}\cup B_i)=\{y,b_i\}\subseteq (Y\cup\{y,b_i\}),
    \]
    and hence $\{y\}\cup B_i$ is colored by $\alpha_i$ or $\beta_i$.
    Since $y\in X$, by \cref{clm:X two colors}, we conclude that $c(\{y\}\cup B_i)=\alpha_i$.
    Denote the edge $Y'\cup\{b_1,b_2\}$ by $h_3$.
    For $i=1,2$, note that $c(\{y\}\cup B_i)=\alpha_i$, $c(\{b_i\}\cup B_i)=\beta_i$ and
    \[
        (\{y\}\cup B_i) \triangle (\{b_i\}\cup B_i)=\{y,b_i\}\subseteq h_3.
    \]
    If $\beta_1=\beta_2$, then $c(h_3)\in \{\alpha_1,\beta_1\}\cap\{\alpha_2,\beta_2\}=\{\beta_1\}$, and so $c(h_3)=\beta_1$, which contradicts \cref{clm:X two colors}.
    If $\beta_1\ne\beta_2$, then $c(h_3)\in \{\alpha_1,\beta_1\}\cap\{\alpha_2,\beta_2\}=\emptyset$, which is impossible, see \cref{fig:only one useful}.\footnote{We omit $Y'$ and $h_3$ in the figure since their inclusion would significantly complicate the drawing.}
    This completes the proof of the claim.
\end{proof}
\begin{figure}[H]
    \centering
    \qquad\qquad\qquad
    \begin{tikzpicture}
        [
        color1/.style={draw, very thick, rounded corners, red, inner sep=2pt},
        color1'/.style={draw, very thick, rounded corners, red, inner sep=-2pt},
        color2/.style={draw, very thick, densely dashed, rounded corners, blue, inner sep=0pt},
        color2'/.style={draw, very thick, densely dashed, rounded corners, blue, inner sep=6pt},
        color2''/.style={draw, very thick, densely dashed, rounded corners, blue, inner sep=4pt},
        color3/.style={draw, very thick, densely dotted, rounded corners, violet, inner sep=0pt},
        color4/.style={draw, very thick, dash dot, rounded corners, teal, inner sep=2pt},
        ]

        \node (y) at (-1,-1) {$y$};
        \node (Y) at (0,0) {$Y$};
        \node (b1) at (-2,0) {$b_1$};
        \node (b2) at (1,0) {$b_2$};
        \node (B1) at (-1,1) {$B_1$};
        \node (B2) at (1,-2) {$B_2$};
        \node [color1, fit=(y) (Y) (b1)] {};
        \node [color2, fit=(y) (Y) (b2)] {};
        \node [color3, fit=(b1) (B1)] {};
        \node [color4, fit=(b2) (B2)] {};
        \node [color1', fit=(y) (B1)] {};
        \node [color2'', fit=(y) (B2)] {};
        \draw[very thick, rounded corners, red, inner sep=10pt] (2.5,1.2) rectangle (3.2,1.5);
        \node (legend1) at (3.5,1.35) {$\alpha_1$};
        \draw[very thick, densely dashed, rounded corners, blue, inner sep=1pt] (2.5,0.7) rectangle (3.2,1.0);
        \node (legend2) at (3.5,0.85) {$\alpha_2$};
        \draw[very thick, densely dotted, rounded corners, violet, inner sep=1pt] (2.5,0.2) rectangle (3.2,0.5);
        \node (legend3) at (3.5,0.35) {$\beta_1$};
        \draw[very thick, dash dot, rounded corners, teal, inner sep=1pt] (2.5,-0.3) rectangle (3.2,0);
        \node (legend4) at (3.5,-0.15) {$\beta_2$};
    \end{tikzpicture}
    \caption{}
    \label{fig:only one useful}
\end{figure}
By \cref{clm:one special}, we may and do assume that every edge in $H[A_1]$ is colored by $\alpha_1$ or $\alpha_2$.
By \cref{clm:crossing color} and \cref{clm:X two colors}, we have
\[
    c(H)\le 2+k,
\]
where $k$ denotes the number of colors other than $\alpha_1,\alpha_2$ in the edge-colored graph $H[A_2]$.
Note that $|A_1|\ge1$ and $|X|=p-1$, and hence $|A_2|= n-|A_1|-|X|\le n-p$.
By \cref{clm:disjoint}, we conclude that $k\le \floor{|A_2|/p}\le \floor{(n-p)/p}$.
This proves that
\[
    c(H)\le 2+k\le 1+\floor{n/p}.
\]
Furthermore, it is easy to see from the proof that $c(H)=1+\floor{n/p}$ only when $H[A_2]$ contains $\floor{(n-p)/p}$ vertex-disjoint rainbow edges which are not colored by $\alpha_1$ or $\alpha_2$.

In the rest of the proof, we will show that $H$ contains a vertex subset $U$ with $|U|=p\cdot\floor{n/p}$ such that $H[U]$ can be obtained by taking $\floor{n/p}$ vertex-disjoint rainbow edges and coloring all the remaining edges with one additional color.
Now let us suppose that $n=pq+r$, where $q=\floor{n/p}$ and $r$ is an integer with $0\le r\le p-1$.
Let the $\floor{(n-p)/p}$ vertex-disjoint rainbow edges which are not colored by $\alpha_1$ or $\alpha_2$ in $H[A_2]$ be $g_1,\ldots,g_{q-1}$.
For $i\in[q-1]$, denote $c(g_i)$ by $\beta_i$, where $\beta_i\notin\{\alpha_1,\alpha_2\}$ and $\beta_1,\ldots,\beta_{q-1}$ are pairwise distinct.

Let $z$ be a fixed vertex in $A_1$.
Denote the edge $X\cup\{z\}$ by $g_q$ and the set $\bigcup_{i=1}^q g_{i}$ by $U$
In the rest of the proof, we show that every edge $h$ in $H[U]$ with $h\ne g_i$ for $i\in[q]$ is colored by $\alpha_2$.

For $i\in[q-1]$ and $y\in g_i$, denote the edge $g_i\cup\{z\}\setminus\{y\}$ by $g_i(y)$.
By \cref{clm:crossing color}, the color of $g_i(y)$ is either $\alpha_1$ or $\alpha_2$.
If there exists distinct $y,y'$ in $g_i$ such that $c(g_i(y))\ne c(g_i(y'))$, then the colors of $g_i(y),g_i(y')$ and $g_i$ are pairwise distinct and
\[
    |(g_i(y)\triangle g_i(y')) \cup (g_i(y) \triangle g_i) \cup (g_i(y')\triangle g_i)| = |\{y,y',z\}|=3 \le p,
\]
contradicting \cref{lem:rainbow triple}.
This shows that $c(g_i(y))=c(g_i(y'))$ for all $y,y'\in g_i$ and $i\in[q-1]$.
Note that every $g_i$ and $g_i(y)$ form a rainbow copy of $S_{p-1,2}^{(p)}$.
Applying \cref{clm:X two colors} to this rainbow copy of $S_{p-1,2}^{(p)}$ implies every edge $h$ that intersects $g_i\cap g_i(y)=g_i\setminus\{y\}$ is colored by $c(g_i)$ or $c(g_i(y))$.
Particularly, the edge $X\cup\{y'\}$ is colored by $\alpha_2$ by the definition of $A_2$, and hence $c(g_i(y))=\alpha_2$, where $y'$ is a vertex in $g_i\setminus\{y\}$.

For an edge $h\in H[\bigcup_{i=1}^{q-1} g_i]$ with $h\ne g_k$ for all $k\in[q-1]$, there exists $g_i$ and $g_j$ such that $h\cap g_i\ne\emptyset$ and $h\cap g_j\ne\emptyset$.
Let $y_i$ be a vertex in $g_i\setminus h$ and $y_j$ be a vertex in $g_j\setminus h$.
The edge $h$ intersects both $g_i(y_i)\cap g_i$ and $g_j(y_j)\cap g_j$.
Applying \cref{clm:X two colors} gives
\[
    c(h)\in \{c(g_i),c(g_i(y_i))\}\cap \{c(g_j),c(g_j(y_j))\}=\{\alpha_2\},
\]
and hence $c(h)=\alpha_2$.

Let $h$ be an edge in $H[U]$ with $h\cap g_q\ne\emptyset$ and $h\ne g_q$.
Clearly, either $h\cap X\ne\emptyset$ or $z\in h$ and $h\cap A_2\ne\emptyset$ holds.
By \cref{clm:X two colors} and \cref{clm:crossing color}, the color of $h$ is either $\alpha_1$ or $\alpha_2$.
Since $h\ne g_q$, $h$ intersects an edge $g_i$ for some $i\in[q-1]$.
Let $y$ be a vertex in $g_i\setminus h$.
Previous arguments imply every edge, particularly $h$, that intersects $g_i\setminus \{y\}$ is colored by $\alpha_2$ or $c(g_i)$.
Therefore, we have
\[
    c(h)\in\{\alpha_1,\alpha_2\}\cap \{\alpha_2,c(g_i)\}=\{\alpha_2\}.
\]
This shows $c(h)=\alpha_2$.
Therefore, every edge $h\in H[U]$ with $h\neq g_r$ for all $r\in[q]$ is colored by $\alpha_2$.

\section{Proof of \cref{thm: F5 free} and \cref{coro:maximum color}}\label{sec: proof of 2}

Denote the $3$-graph $\{abc,abd,abe\}$ by $H_1$ and $\{abc,bcd,cde\}$ by $H_2$. 
Denote the $3$-graph $\{abc,abd,bcd\}$ by $F_4$ and $\{abc,abd,cde\}$ by $F_5$.

\medskip

\noindent {\bf Proof of \cref{thm: F5 free}:}
Let $n$ be an integer with $n\ge5$.
Suppose that $H$ is an edge-colored rainbow $F_5$-free $K_n^{(3)}$ that contains a rainbow $F_4$.
Let $\{xyz,xyw,xzw\}$ be a rainbow $F_4$ in $H$.
Denote $c(xyz)$, $c(xyw)$ and $c(xzw)$ by $\alpha_1,\alpha_2$ and $\alpha_3$, respectively.
Denote $V(H)$ by $V$ and $V(H)\setminus\{x,y,z,w\}$ by $W$.

Let $a$ be a vertex in $W$.
By \cref{lem:F5 free}, $H$ is rainbow $\{H_1,H_2\}$-free.
Note that $\alpha_1,\alpha_2$ and $\alpha_3$ are pairwise distinct.
If $c(xya)\notin\{\alpha_1,\alpha_2\}$, then $xya,xyz,xyw$ form a rainbow copy of $H_1$.
If $c(xya)\notin\{\alpha_2,\alpha_3\}$, then $xya,xyw,xzw$ form a rainbow copy of $H_2$, and hence $c(xya)=\alpha_2$.
This shows that $xya,xyz,xzw$ form a rainbow $H_2$, which contradicts \cref{lem:F5 free}.

\begin{figure}[H]
    \centering
    \qquad
    \begin{tikzpicture}
        [
        color1/.style={draw, very thick, rounded corners, red, inner sep=12pt},
        color1'/.style={draw, very thick, rounded corners, red, inner sep=6pt},
        color2/.style={draw, very thick, densely dashed, rounded corners, blue, inner sep=10pt},
        color2'/.style={draw, very thick, densely dashed, rounded corners, blue, inner sep=8pt},
        color2''/.style={draw, very thick, densely dashed, rounded corners, blue, inner sep=4pt},
        color3/.style={draw, very thick, densely dotted, rounded corners, violet, inner sep=8pt},
        color3'/.style={draw, very thick, densely dotted, rounded corners, violet, inner sep=6pt},
        color4/.style={draw, very thick, dash dot, rounded corners, teal, inner sep=6pt},
        ]
        \node (x1) at (0,0) {};
        \node (y1) at (1,0) {};
        \node (z1) at (-1,0) {};
        \node (w1) at (0,1) {};
        \node (a1) at (0,-1) {};
        \fill (x1) circle (0.1) node[below] {$x$};
        \fill (y1) circle (0.1) node[below] {$y$};
        \fill (z1) circle (0.1) node[below] {$z$};
        \fill (w1) circle (0.1) node[below] {$w$};
        \fill (a1) circle (0.1) node[right] {$a$};
        \node [color1, fit=(x1) (y1) (z1)] {};
        \node [color2', fit=(x1) (y1) (a1)] {};
        \node [color2, fit=(x1) (y1) (w1)] {};
        \node [color3, fit=(x1) (z1) (w1)] {};
        \draw[very thick, rounded corners, red, inner sep=10pt] (2.5,1.2) rectangle (3.2,1.5);
        \node (legend0) at (3.5,1.35) {$\alpha_1$};
        \draw[very thick, densely dashed, rounded corners, blue, inner sep=1pt] (2.5,0.7) rectangle (3.2,1.0);
        \node (legend1) at (3.5,0.85) {$\alpha_2$};
        \draw[very thick, densely dotted, rounded corners, violet, inner sep=1pt] (2.5,0.2) rectangle (3.2,0.5);
        \node (legend2) at (3.5,0.35) {$\alpha_3$};
    \end{tikzpicture}
    \caption{}
\end{figure}

\medskip

\noindent {\bf Proof of \cref{coro:maximum color}:}
By \cref{thm: F5 free}, when $n\ge5$, every edge-colored rainbow $F_5$-free $K_n^{(3)}$ is rainbow cancellative, and hence
\[
    \ar(n,F_5)\le \ar(n,\cF^{(3)}).
\]
Since $\cF^{(3)}=\{F_4,F_5\}$, every rainbow $\cF^{(3)}$-free edge-coloring is also rainbow $F_5$-free, and hence
\[
    \ar(n,F_5)\ge \ar(n,\cF^{(3)}).
\]
This proves
\[
    \ar(n,F_5)=\ar(n,\cF^{(3)})=1+\floor{n/3}.
\]

\section{Proof of \cref{thm:F4}}\label{sec: proof of 3}
\subsection{Construction of the lower bound}

Let us first prove $\ar(n,F_4)\ge m(n)+1$ for all $n\ge4$
Let $P\coloneqq K_n^{(3)}$ be an edge-colored complete $3$-graph.
For a vertex $x\in V(P)$, define an edge-colored graph $P(x)\coloneqq K_{n-1}^{(2)}$ on vertex set
$V(P(x))\coloneqq V(P)\setminus\{x\}$ by setting
\[
    c_{P(x)}(yz)\coloneqq c_P(xyz)\qquad\text{for all distinct }y,z\in V(P(x)).
\]
It is straightforward to check that $P$ is rainbow $F_4$-free if and only if $P(x)$ is rainbow $K_3^{(2)}$-free for every $x\in V(P)$.

A {\it representing graph} of an edge-colored graph $G$ is a spanning subgraph of $G$ obtained by choosing one edge from each color class.
It is easy to verify that $G$ is rainbow $K_3^{(2)}$-free if and only if every representing graph of $G$ is $K_3^{(2)}$-free.

For general $n$ with $n\ge4$, let $H$ be an edge-colored $K_n^{(3)}$ obtained from a rainbow maximum partial Steiner triple system (MPSTS) of order $n$ by coloring all other edges with one extra color $\alpha$.
Let $v$ be a vertex of $H$ and let $L$ be a representing graph of $H(v)$.
The edges of $L$ coming from blocks containing $v$ form a matching, and $L$ contains exactly one additional edge of color $\alpha$.
Hence $L$ is triangle-free, so $H(v)$ is rainbow $K_3^{(2)}$-free.
Since $v$ was arbitrary, $H$ is rainbow $F_4$-free.
It is easy to check that $c(H)=m(n)+1$, and hence $\ar(n,F_4)\ge m(n)+1$.
This completes the proof.

\bigskip

Now we prove the stronger bound for $n=2^s-1$.
Fix an integer $s\ge3$ and put
\[
    n\coloneqq 2^s-1,\qquad V\coloneqq \FF_2^s\setminus\{0\}.
\]
We will construct a rainbow $F_4$-free edge-coloring of $K_n^{(3)}$ using
\[
    m(n)+\frac{n^2}{42}+o(n^2)
\]
colors.

Given a positive integer $t$ and a subset $S\subseteq \FF_2^s\setminus\{0\}$, we say that $S$ is a {\it projective $(t-1)$-dimensional subspace} of $\FF_2^s\setminus\{0\}$ if $S\cup\{0\}$ is a linear $t$-dimensional subspace of $\FF_2^s$.
Projective $0$-, $1$-, and $2$-dimensional subspaces are called {\it projective points}, {\it projective lines}, and {\it projective planes}, respectively.

Let $G\coloneqq \PG(s-1,2)$ be the Steiner triple system with vertex set $V$ and block set
\[
    E(G)\coloneqq \bigl\{\{x,y,x+y\}\colon x,y\in\FF_2^s\setminus\{0\},x\ne y\bigr\}.
\]
Equivalently, the vertices and blocks of $\PG(s-1,2)$ are the projective points and projective lines in $\FF_2^s\setminus\{0\}$, respectively.

\medskip
For distinct $x,y,z\in \FF_2^s\setminus\{0\}$, define
\begin{equation}\label{eq:def of cl}
    \langle x,y,z\rangle \coloneqq \text{the linear span of } \{x,y,z\} \text{ in }\FF_2^s
    \text{ and }
    \Pi(x,y,z) \coloneqq \langle x,y,z\rangle\setminus\{0\}.
\end{equation}
The following proposition summarizes the basic properties of $\Pi(x,y,z)$ that we will use later.
\begin{proposition}\label{prop:property of cl}
    If $x,y,z\in V(G)$ are distinct, then the following statements hold. \\
    \noindent (i) $xyz\in E(G)$ if and only if $\Pi(x,y,z)$ is a projective line of $\FF_2^s\setminus\{0\}$. \\
    \noindent (ii) $xyz\notin E(G)$ if and only if $\Pi(x,y,z)$ is a projective plane of $\FF_2^s\setminus\{0\}$.
    Moreover, $\Pi(x,y,z)$ is the unique projective plane containing $\{x,y,z\}$ and $\Pi(x,y,z)$ is isomorphic to $\PG(2,2)$, the unique Steiner triple system on seven vertices.\footnote{$\PG(2,2)$ is also called the {\it Fano plane}.} \\
    \noindent (iii) If $xyz\in E(G)$ and $d$ is a vertex in $V(G)\setminus\{x,y,z\}$, then
    \[
        \Pi(x,y,d)=\Pi(x,z,d)=\Pi(y,z,d).
    \]
\end{proposition}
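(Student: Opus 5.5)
The proof of \cref{prop:property of cl} is pure linear algebra over $\FF_2$: everything follows once we know $\dim\langle x,y,z\rangle$. Since $x,y,z$ are distinct and nonzero, any two of them are linearly independent over $\FF_2$ (the only scalars are $0$ and $1$, and $x\ne y$ forces $x+y\ne 0$). Hence $\dim\langle x,y,z\rangle\in\{2,3\}$. The dimension is $2$ exactly when $z\in\langle x,y\rangle=\{0,x,y,x+y\}$, that is, when $z=x+y$.

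For (i), the condition $z=x+y$ is, by the definition of $E(G)$, equivalent to $xyz\in E(G)$. Indeed, a block $\{a,b,a+b\}$ that equals $\{x,y,z\}$ satisfies $a+b+(a+b)=0$, so $x+y+z=0$, i.e.\ $z=x+y$. Thus $xyz\in E(G)$ if and only if $\langle x,y,z\rangle$ is $2$-dimensional, which holds if and only if $\Pi(x,y,z)$ is a projective line.

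For (ii), by the dichotomy above, $xyz\notin E(G)$ if and only if $\dim\langle x,y,z\rangle=3$, i.e.\ $\Pi(x,y,z)$ is a projective plane. For uniqueness: any projective plane containing $x,y,z$ corresponds to a $3$-dimensional subspace containing the $3$-dimensional space $\langle x,y,z\rangle$, so it equals $\langle x,y,z\rangle$. For the isomorphism with $\PG(2,2)$: choosing a basis $x,y,z$ identifies $\langle x,y,z\rangle$ with $\FF_2^3$. Under this identification, the blocks of $G$ contained in $\Pi(x,y,z)$ are exactly the triples $\{u,v,u+v\}$ with $u,v\in\Pi(x,y,z)$, because such a triple is closed under addition and so lies in the subspace. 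The induced triple system is therefore $\PG(2,2)$, which is the Fano plane.

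For (iii), let $xyz\in E(G)$, so $z=x+y$, and let $d\notin\{x,y,z\}$. Then $d\notin\langle x,y\rangle$, so $\langle x,y,d\rangle$ is $3$-dimensional and contains $z=x+y$. It therefore contains $\langle x,z,d\rangle$ and $\langle y,z,d\rangle$. These two spaces are also $3$-dimensional: $d\notin\langle x,z\rangle=\langle x,y\rangle$, and likewise $d\notin\langle y,z\rangle=\langle x,y\rangle$. Equal dimension gives equality of the three subspaces, and removing $0$ gives the claim.

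No step is a real obstacle. The only point needing care is the identification, used in (i), between blocks of $G$ and triples with $x+y+z=0$.
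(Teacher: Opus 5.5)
Your proof is correct and follows the paper's argument essentially step for step: the dichotomy $\dim\langle x,y,z\rangle\in\{2,3\}$ gives (i) and (ii), uniqueness comes from comparing dimensions, and (iii) follows from $z=x+y$ forcing the three spans to coincide (you spell out the dimension check that the paper leaves implicit). The only small variation is how you identify $\Pi(x,y,z)$ with $\PG(2,2)$: you transport the structure via the basis $x,y,z$ to $\FF_2^3\setminus\{0\}$ with blocks $\{u,v,u+v\}$, which is $\PG(2,2)$ by definition, whereas the paper observes that $\Pi(x,y,z)$ is an $\STS(7)$ and cites the uniqueness of $\STS(7)$, so your route avoids that external fact.
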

\begin{proof}
    (i) We have $xyz\in E(G)$ if and only if $z=x+y$, equivalently, $x,y,z$ are linearly dependent over $\FF_2$.
    Since $x,y$ are distinct and nonzero, $\langle x,y,z\rangle$ equals the linear span of $x,y$.
    Therefore, $\langle x,y,z\rangle$ is a linear $2$-dimensional subspace of $\FF_2^s$, and hence $\Pi(x,y,z)=\{x,y,x+y\}$ is a projective line.

    Conversely, if $\Pi(x,y,z)$ is a projective line, then $\langle x,y,z\rangle$ is a $2$-dimensional subspace of $\FF_2^s$, hence $x,y,z$ are linearly dependent.
    Over $\FF_2$, any dependence among three distinct nonzero vectors forces $z=x+y$, so $xyz\in E(G)$.

    (ii)  The first assertion holds since it is the contrapositive of (i).
    For uniqueness, let $S$ be any projective plane containing $\{x,y,z\}$.
    Clearly, $S\cup\{0\}$ is a linear $3$-dimensional subspace of $\FF_2^s$, and since $x,y,z\in S\cup\{0\}$ we have $\langle x,y,z\rangle\subseteq S\cup\{0\}$.
    Both $\langle x,y,z\rangle$ and $S\cup\{0\}$ are $3$-dimensional, so $S\cup\{0\}=\langle x,y,z\rangle$ and hence $S=\Pi(x,y,z)$.
    Finally, $\Pi(x,y,z)$ has size $2^3-1=7$ and it is easy to check that $u+v\in\Pi(x,y,z)$ for every distinct $u,v\in\Pi(x,y,z)$, so it forms an $\STS(7)$; the unique $\STS(7)$ is $\PG(2,2)$ (see, e.g., Remark~2.19 in Section~2.4 of~\cite{handbook}).

    (iii) Since $xyz\in E(G)$, by the definition of $\PG(s-1,2)$, we have $x+y=z$, equivalently, $x,y,z$ are linearly dependent over $\FF_2$.
    Therefore, we have
    \[
        \langle x,y,d \rangle=\langle x,z,d \rangle=\langle y,z,d \rangle.
    \]
    Removing $\{0\}$ gives the desired result.
\end{proof}

\medskip

A key definition used in the proof is the Grassmann graph \cite{chow1949}.
For $k\ge2$, the {\it Grassmann graph} $J_q(s,k)$ is a graph whose vertex set is the family of projective $(k-1)$-dimensional subspaces of $\FF_q^s\setminus\{0\}$.\footnote{The Grassmann graph was defined on linear spaces originally and we use its projective form to simplify the proof.}
Two vertices are adjacent if and only if the corresponding projective subspaces intersect in a projective $(k-2)$-dimensional subspace.

In this subsection we put
\[
    J\coloneqq J_2(s,3).
\]
Given three distinct projective planes $W_1,W_2$ and $W_3$, if they form a triangle in $J$, then $W_1\cap W_2\cap W_3$ is either a projective point or a projective line.
For a triangle $W_1W_2W_3$, we call it a {\it point-type triangle} if $W_1\cap W_2\cap W_3$ is a projective point and call it a {\it line-type triangle} otherwise.
A vertex-coloring $\varphi \colon V(J)\to\cC$ is called {\it good} if $J$ contains no point-type triangle whose three vertices receive pairwise distinct colors.
Write $\tau(J)$ for the maximum possible $|\cC|$ over all good colorings of $J$. 

Fix a good-vertex coloring $\varphi\colon V(J)\to[r]$ with $r=\tau(J)$.
For $i=1,\ldots,r$, suppose that $V_i$ is the set of projective planes colored by $i$-th color under $\varphi$.
Let $\beta_1,\ldots,\beta_r$ be pairwise distinct colors, all distinct from the block colors introduced below.

Let $H\coloneqq K_n^{(3)}$ on vertex set $V$.
Define an edge-coloring of $H$ by:
\begin{itemize}
    \item every block $e\in E(G)$ receives its own distinct {\it block color} $\alpha_e$;
    \item for each $i\in[r]$ and each projective plane $W\in V_i$, color every $T$ in $\binom{W}{3}\setminus E(G)$ by the {\it background color} $\beta_i$.
\end{itemize}
By \cref{prop:property of cl}(ii), every non-block triple $xyz$ lies in the unique projective plane $\Pi(x,y,z)$, hence the above coloring is well-defined.

The next proposition records a local constraint that motivates assigning a single background color to all non-block triples inside a projective plane.
\begin{proposition}\label{prop:fano_comp_mono}
    Let $H$ be an edge-colored rainbow $F_4$-free $K_n^{(3)}$.
    Let $W$ be a rainbow copy of $\PG(2,2)$ in $H$.
    If the color of each block-edge of $W$ is used exactly once, then all triple $T$ in $\binom{V(W)}{3}\setminus E(W)$ have the same color in $H$.
\end{proposition}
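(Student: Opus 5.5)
The plan is to work through the link graphs $H(x)$ introduced in the proof of the lower bound $\ar(n,F_4)\ge m(n)+1$. There we noted that $H$ is rainbow $F_4$-free if and only if every $H(x)$ is rainbow $K_3^{(2)}$-free. Write $V(W)=\{x_1,\dots,x_7\}$ and fix $x\in V(W)$. Inside $V(W)\setminus\{x\}$, the three blocks of $W$ through $x$ give a perfect matching $M_x=\{a_1b_1,a_2b_2,a_3b_3\}$ of the $K_6$ on these six vertices. By hypothesis each of these three edges of $H(x)$ carries a color used nowhere else in $H$. The remaining $12$ pairs $uv$ correspond exactly to the non-block triples $xuv$ of $W$, because two vertices of a Fano plane together with a third point off their line never form a block.

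\textbf{Step 1 (local merging).} Take $a_ib_i\in M_x$ and any $c\in V(W)\setminus\{x,a_i,b_i\}$. The triangle $a_ib_ic$ in $H(x)$ has $c(a_ib_i)$ unique in $H$, so that color differs from $c(a_ic)$ and $c(b_ic)$. Since the triangle is not rainbow, we get $c(a_ic)=c(b_ic)$, i.e.\ $c(xa_ic)=c(xb_ic)$. Applying this twice shows that the four triples $x\,u\,v$ with $u\in\{a_i,b_i\}$ and $v\in\{a_j,b_j\}$ all share one color $X^x_{ij}$. Considering the triangle $a_1a_2a_3$ of $H(x)$, which is not rainbow, we further get that at least two of $X^x_{12},X^x_{13},X^x_{23}$ coincide.

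\textbf{Step 2 (global gluing).} Each non-block triple $T=uvw$ of $W$ lies in three links, and so belongs to three of the $4$-element classes of Step 1. Form the auxiliary graph $\Gamma$ on the $28$ non-block triples, joining two triples whenever Step 1 forces them to have equal color. I would then show that $\Gamma$ is connected. The intended way is first to show that, for each $x$, the classes $X^x_{12},X^x_{13},X^x_{23}$ all coincide, not just two of them, by playing the link of $x$ against the links of the other vertices.

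Concretely, a triple $x a_1 a_2$ also lies in the link of $a_1$. There the pair $\{x,a_2\}$ straddles two blocks through $a_1$, so it belongs to some class $X^{a_1}_{\cdot\cdot}$, and that class contains triples avoiding $x$. Chasing these identifications around the Fano plane, using its full symmetry (its collineation group is $2$-transitive on points and transitive on anti-flags), should merge all classes into one. The equalities I expect to use are those produced by the non-rainbow triangle condition at each vertex: the $4$-classes at different vertices overlap in single triples, so the equalities propagate.

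The main obstacle is exactly this last propagation. Step 1 at a single vertex only guarantees that two of the three classes agree, and I must rule out a configuration in which each vertex link carries two distinct background colors consistently. I would first attempt a direct check. Take one fixed non-block triple $T=x_1x_2x_3$ and list its three $4$-classes, which together cover a large part of the $28$ triples. Then use the fact that the overlaps of these classes identify, at each of the other vertices, which pair of the three classes must merge. If this becomes messy, the fallback is a short exhaustive case analysis up to Fano automorphisms on which class pair merges at a given vertex, deriving a rainbow triangle in some link $H(y)$ in every case where two distinct colors survive.
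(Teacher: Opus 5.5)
Your Step 1 is correct and uses the same mechanism as the paper: a $4$-set containing a block whose color is used only once forces the other three triples on it to share a color. The gap is Step 2, which is where the content of the proposition lies. You never prove that $X^x_{12},X^x_{13},X^x_{23}$ coincide. The non-rainbow triangle $a_1a_2a_3$ gives only two of the three equalities, and you leave the rest as ``the main obstacle.'' Your plan for it is a chase that ``should'' work, or an unspecified case analysis over Fano automorphisms. As written, nothing excludes a link $H(x)$ carrying two distinct background colors, so the proof is incomplete.

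The missing idea is short. Take $u\in\{a_1,b_1\}$ and $v\in\{a_2,b_2\}$, and let $uvw$ be the line through $u$ and $v$. Then $w\in\{a_3,b_3\}$. Indeed, $w=x$, $\{u,w\}=\{a_1,b_1\}$ or $\{v,w\}=\{a_2,b_2\}$ would each make $uvw$ one of the lines through $x$, and none of those contains both $u$ and $v$.

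Now apply your own Step 1 in the link $H(u)$, where $vw$ is a matching edge of unique color. The triangle $vwx$ gives $c(xuv)=c(xuw)$, i.e.\ $X^x_{12}=X^x_{13}$. Likewise, in $H(v)$ the matching edge $uw$ and the triangle $uwx$ give $X^x_{12}=X^x_{23}$. You were one step away: the class at $a_1$ containing $xa_1a_2$ also contains $xa_1w$, a triple through $x$ lying in a different class at $x$. Hence all $12$ non-block triples through $x$ share one color $\gamma(x)$.

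The global gluing then needs no symmetry argument. Any two points $x,y$ of $W$ lie in a common non-block triple $xyd$ (take $d$ off the line through $x$ and $y$), so $\gamma(x)=c(xyd)=\gamma(y)$. This is essentially the paper's argument: it does the per-vertex merging via the $4$-set $\{x,u,v,w\}$ and the gluing via the $4$-set $\{x,y,z,d\}$.
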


\begin{proof}
    Since $H$ is rainbow $F_4$-free, for every $4$-set $Q\subseteq V(H)$ the four triples in $\binom{Q}{3}$ use at most two colors: otherwise three of them would have pairwise distinct colors and form a rainbow copy of $F_4$ on $Q$.

    Fix $x\in W$.
    In the Fano plane $W$, the three blocks through $x$ partition $W\setminus\{x\}$ into three disjoint pairs.
    Write these blocks as
    \[
        \{x,a_1,b_1\},\qquad \{x,a_2,b_2\},\qquad \{x,a_3,b_3\}.
    \]
    We first show that all non-block triples in $\binom{W}{3}\setminus E(W)$ that contain $x$ have the same color.

    Consider the $4$-set $Q=\{x,a_1,b_1,a_2\}$.
    It contains the block $\{x,a_1,b_1\}$, whose color is used exactly once in $H$.
    Hence none of the other three triples in $\binom{Q}{3}$ can have this block-color.
    Since $\binom{Q}{3}$ uses at most two colors, the three non-block triples $xa_1a_2,xb_1a_2,a_1b_1a_2$ all have the same color.
    In particular,
    \begin{equation}\label{eq:fano_cross_1}
        c_H(xa_1a_2)=c_H(xb_1a_2).
    \end{equation}
    Applying the same argument to $Q'=\{x,a_2,b_1,b_2\}$ and $Q''=\{x,a_1,b_1,b_2\}$ gives
    \begin{equation}\label{eq:fano_cross_2}
        c_H(xb_1a_2)=c_H(xb_1b_2) \text{ and }c_H(xa_1b_2)=c_H(xb_1b_2)
    \end{equation}
    respectively, and hence
    \[
        c_H(xa_1a_2)=c_H(xa_1b_2)=c_H(xb_1a_2)=c_H(xb_1b_2).
    \]
    Take $u\in\{a_1,b_1\}$ and $v\in\{a_2,b_2\}$.
    Let $w$ be the third vertex of the unique block in $W$ containing the pair $\{u,v\}$.
    The point $w$ cannot lie in $\{x,a_1,b_1,a_2,b_2\}$ since each pair in an $\STS(7)$ lies in a unique block, and hence $w\in\{a_3,b_3\}$.
    Now consider the $4$-set $\{x,u,v,w\}$, which contains the block $\{u,v,w\}$.
    Similar argument above implies
    \[
        c_H(xuv)=c_H(xuw)=c_H(xvw).
    \]
    Hence the non-block colors coming from the pairs of blocks
    \[
        (\{x,a_i,b_i\},\{x,a_j,b_j\})
    \]
    are all equal, where $1\le i<j\le3$.
    Therefore, there exists a color $\gamma(x)$ such that $c_H(xyz)=\gamma(x)$ for all non-block triple $xyz\in \binom{V(W)}{3}\setminus E(W)$.

    Finally, we show $\gamma(x)$ is independent of $x$.
    Take any two distinct points $x,y\in W$ and let $\{x,y,z\}\in E(W)$ be the unique block containing $\{x,y\}$.
    Pick $d\in W\setminus\{x,y,z\}$.
    The $4$-set $\{x,y,z,d\}$ contains the block $xyz$, whose color is used exactly once in $H$.
    Hence the three remaining triples $xyd,xzd,yzd$ cannot use this block-color and must share one common color.
    We have $c_H(xyd)=\gamma(x)$ and $c_H(yzd)=\gamma(y)$, so $\gamma(x)=\gamma(y)$.
    Since $x,y$ were arbitrary, all $\gamma(x)$ are equal to a single color $\gamma_W$.
    This proves the desired result.
\end{proof}

Denote the resulting edge-colored $K_n^{(3)}$ by $\wt{H}$.
We now verify that $\wt{H}$ contains no rainbow copy of $F_4$.
\begin{lemma}\label{lem:auxiliary}
    Let $G$ be $\PG(s-1,2)$ for some $s\ge3$ and let $\wt{H}$ be defined as above.
    Then $\wt{H}$ is rainbow $F_4$-free.
\end{lemma}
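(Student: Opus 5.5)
The plan is to reduce rainbow $F_4$-freeness to a statement about $4$-sets. Every copy of $F_4$ consists of three of the four triples inside some $4$-set $Q\subseteq V$. So it suffices to show that for every $4$-set $Q=\{a,b,c,d\}\subseteq V$, the four triples in $\binom{Q}{3}$ receive at most two distinct colors in $\wt H$. Indeed, if they used three or more colors, we could pick one triple of each of three different colors and obtain a rainbow $F_4$; conversely, if they use at most two colors, no three of them are rainbow. We will split into cases according to whether $Q$ contains a block of $G$ and, if not, according to the linear rank of $Q$ in $\FF_2^s$. Throughout we use that block colors $\alpha_e$ are used exactly once and are distinct from all background colors $\beta_i$.

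\emph{Case 1: $Q$ contains a block.} Say $abc\in E(G)$, i.e.\ $c=a+b$. Since $G$ is an STS, two distinct blocks share at most one vertex, while any two triples in $Q$ share two vertices; hence $abc$ is the only block in $Q$. We can also check this directly: for instance, $abd$ is a block iff $d=a+b=c$, which is impossible. Thus $abd,acd,bcd$ are non-blocks. By \cref{prop:property of cl}(iii), $\Pi(a,b,d)=\Pi(a,c,d)=\Pi(b,c,d)$, so all three lie in one projective plane $W$. They therefore receive the single background color $\beta_i$ with $W\in V_i$. Together with $\alpha_{abc}$, this gives at most two colors on $Q$.

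\emph{Case 2: $Q$ contains no block.} Now each of the four triples is colored by the background color of its own plane $\Pi(\cdot)$, which is well defined by \cref{prop:property of cl}(ii). If $\dim\langle a,b,c,d\rangle=3$, then all four triples span the same $3$-space, so all four planes coincide and only one color appears. Otherwise $a,b,c,d$ are linearly independent (rank $2$ is impossible, since $Q$ would then lie in a projective line, which has only $3$ points). In this case the four planes $W_{abc},W_{abd},W_{acd},W_{bcd}$ are pairwise distinct. Any two of them, say $\langle a,b,c\rangle\cap\langle a,b,d\rangle$, meet in a $2$-dimensional subspace by independence; hence they are adjacent in $J=J_2(s,3)$, and the four planes form a $K_4$ in $J$. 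Moreover, any three of them meet in a projective point: for example $\langle a,b,c\rangle\cap\langle a,b,d\rangle\cap\langle a,c,d\rangle=\langle a\rangle$. So every triangle in this $K_4$ is point-type. Since $\varphi$ is good, no three of the four planes receive pairwise distinct colors. It follows that the four planes, and hence the four triples, use at most two colors in total, because three colors would produce a rainbow point-type triangle.

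The main obstacle is the rank-$4$ subcase. The computation there must verify that the triple intersections are exactly projective points, and not lines, so that the goodness of $\varphi$ genuinely applies. This is exactly where the definition of a good coloring (forbidding only rainbow point-type triangles) is used. The remaining cases follow directly from \cref{prop:property of cl}.
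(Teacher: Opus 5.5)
Your proof is correct and follows essentially the paper's argument: the case with a block uses \cref{prop:property of cl}(iii) to put the other triples into a single projective plane, and the block-free case uses that the relevant planes form point-type triangles in $J$, so the goodness of $\varphi$ applies. The only difference is organizational. You bound the number of colors on every $4$-set directly, splitting by rank and using the whole $K_4$ of planes, whereas the paper argues by contradiction from a putative rainbow $F_4$. The two are equivalent, since any three triples of a $4$-set form a copy of $F_4$.
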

\begin{proof}
    Suppose for a contradiction that $\wt{H}$ contains a rainbow copy of $F_4$ on vertices $\{x,y,z,w\}$ with edges $xyz,xyw,yzw$.
    Since $G$ is an $\STS$, among these three triples at most one is a block of $G$ as otherwise two distinct blocks would contain the same pair.

    \medskip
    \noindent{\bf Case 1: none of $xyz,xyw,yzw$ is a block of $G$.}

    In this case, each of $xyz,xyw,yzw$ receives a background color.
    Put
    \[
        P_1\coloneqq \Pi(x,y,z),
        P_2\coloneqq \Pi(x,y,w)\text{ and }
        P_3\coloneqq \Pi(y,z,w).
    \]
    By \cref{prop:property of cl}(ii), $P_1,P_2,P_3$ are all projective planes.
    If two of them are equal, say $P_1=P_2$, then both $xyz$ and $xyw$ lie in the same projective plane and hence receive the same background color, contradicting that this copy of $F_4$ is rainbow.
    Therefore $P_1,P_2,P_3$ are pairwise distinct.

    Each pair among $P_1,P_2,P_3$ intersects in a projective line:
    $P_1\cap P_2$ contains the line through $x$ and $y$, $P_1\cap P_3$ contains the line through $y$ and $z$, and $P_2\cap P_3$ contains the line through $y$ and $w$.
    These three lines are distinct and all pass through $y$, so
    \[
        P_1\cap P_2\cap P_3=\{y\},
    \]
    a projective point.
    Hence $P_1P_2P_3$ is a point-type triangle in $J$.

    Since the coloring $\varphi$ is good, in graph $J$, there is no point-type triangle whose three vertices receive distinct colors.
    Hence, two of $P_1,P_2,P_3$, say $P_1$ and $P_2$, receive the same color.
    Therefore, we have $c(xyz)=c(xyw)$, contradicting that the copy of $F_4$ is rainbow.

    \medskip
    \noindent{\bf Case 2: exactly one of $xyz,xyw,yzw$ is a block of $G$.}

    Without loss of generality, assume that $xyz\in E(G)$.
    Since $G$ is a Steiner triple system, $xyw$ and $yzw$ are non-block triples and are colored by some background colors.
    By Proposition~\ref{prop:property of cl}(iii), applied to the block $xyz$ and the vertex $w$, we have
    \[
        \Pi(x,y,w)=\Pi(x,z,w)=\Pi(y,z,w).
    \]
    In particular, $\Pi(x,y,w)=\Pi(y,z,w)$, so both $xyw$ and $yzw$ lie in the same projective plane.
    By the coloring rule, all non-block triples inside the same projective plane receive the same background color, so $c(xyw)=c(yzw)$, contradicting that $xyz,xyw,yzw$ have pairwise distinct colors.

    \medskip
    In both cases we reach a contradiction. Hence $\wt H$ contains no rainbow copy of $F_4$, as desired.
\end{proof}

By \cref{lem:auxiliary}, the above construction is rainbow $F_4$-free and uses exactly $|E(G)|+r=m(n)+\tau(J)$ colors.
Therefore,
\begin{equation}\label{eq:ar-lower-by-tau}
    \ar(n,F_4)\ge m(n)+\tau(J).
\end{equation}

It remains to estimate $\tau(J)$.

\begin{lemma}\label{lem:tau_J23}
    Let $s\to\infty$ and let $J=J_2(s,3)$ be the Grassmann graph on $3$-dimensional subspaces of $\FF_2^s$.
    Write $n\coloneqq 2^s-1$.
    Then
    \[
        \tau(J)\ge\frac{(2^s-1)^2}{42}+o\bigl((2^s-1)^2\bigr).
    \]
\end{lemma}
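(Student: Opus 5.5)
The plan is to realize a good coloring of $J$ as ``one distinct color per plane of a large independent set, plus one common color for everything else'', and then to find a large independent set by a hypergraph matching argument.

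\textbf{Step 1 (reduction to an independent set).} Let $\mathcal I\subseteq V(J)$ be an independent set of $J$, i.e.\ a family of projective planes no two of which share a projective line. Give each $W\in\mathcal I$ its own color and give every remaining plane one common extra color. We check this coloring is good. Take any triangle $W_1W_2W_3$ of $J$, whether point-type or line-type. Its vertices are pairwise adjacent, so at most one of them lies in $\mathcal I$. The other two then receive the common color, so the triangle uses at most two colors. Hence $\tau(J)\ge |\mathcal I|+1$, and it suffices to find an independent set with $|\mathcal I|\ge n^2/42+o(n^2)$.

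\textbf{Step 2 (independent sets as packings).} Two distinct projective planes are adjacent in $J$ exactly when they share a projective line. So independent sets of $J$ are precisely families of planes in which every projective line lies in at most one chosen plane. Define an auxiliary $7$-uniform hypergraph $\mathcal L$ whose vertices are the projective lines of $\FF_2^s\setminus\{0\}$ and whose edges are the sets of seven lines contained in a projective plane (a Fano plane has $7$ lines by \cref{prop:property of cl}(ii)). Then independent sets of $J$ are exactly matchings in $\mathcal L$. The number of projective lines is $n(n-1)/6$, the number of blocks of $\PG(s-1,2)$. So a near-perfect matching of $\mathcal L$ would have size $(1-o(1))\,n(n-1)/42$, which is $n^2/42+o(n^2)$.

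\textbf{Step 3 (apply Pippenger--Spencer / Frankl--R\"odl).} We verify the hypotheses of the nibble theorem for $\mathcal L$.
\begin{itemize}
\item \emph{Regularity.} A line $\ell$ spans a $2$-dimensional subspace, and the planes containing it correspond to the nonzero points of the quotient $\FF_2^{s}/\langle\ell\rangle$ modulo nothing further. So $\ell$ lies in exactly $2^{s-2}-1$ planes, i.e.\ every vertex of $\mathcal L$ has the same degree $D=2^{s-2}-1\to\infty$.
\item \emph{Codegrees.} Two distinct lines lie in at most one common plane. If they meet, they span a unique $3$-dimensional space. If they are disjoint, they span a $4$-dimensional space, which lies in no plane. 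So all codegrees are at most $1=o(D)$.
\end{itemize}
By the Pippenger--Spencer theorem (R\"odl nibble), $\mathcal L$ has a matching covering all but $o(|V(\mathcal L)|)$ vertices. This gives $|\mathcal I|\ge(1-o(1))\frac{n(n-1)}{42}$, and combined with Step 1 it yields the bound on $\tau(J)$.

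\textbf{Main obstacle.} The only nontrivial ingredient is the near-perfect packing in Step 3. The difficulty is making sure the cited nibble theorem applies uniformly as $s\to\infty$: the edge size $7$ is fixed while the degree grows, and the codegree bound is what makes this work. Since those hypotheses are verified above, I expect this to go through.
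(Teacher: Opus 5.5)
Your proof is correct. Its second half differs from the paper's: you derive the independent-set bound directly rather than citing it.

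Step 1 is exactly the paper's reduction $\tau(J)\ge\alpha(J)$: color a large independent set rainbow and everything else with one extra color. Your remark that a triangle of $J$ meets an independent set in at most one vertex supplies the justification the paper leaves as ``clearly''.

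To bound $\alpha(J)$, the paper simply quotes Theorem 1 of Blackburn and Etzion, giving $\alpha(J_2(s,3))=(2^s-1)^2/42+o((2^s-1)^2)$. You instead prove the needed lower bound yourself. You identify independent sets with matchings in the $7$-uniform line--plane hypergraph, check that it is $(2^{s-2}-1)$-regular with codegrees at most $1$, and apply the Pippenger--Spencer (Frankl--R\"odl) nibble theorem.

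This is the same probabilistic packing mechanism that underlies the Blackburn--Etzion asymptotics, so your route essentially unpacks the citation in this special case. What it buys: the argument is self-contained apart from a standard packing theorem, and it shows where $1/42$ comes from (the $n(n-1)/6$ lines, divided by $7$ lines per plane). The paper's citation is shorter and also states the matching upper bound. That direction is trivial by the same line count, $\alpha(J)\le n(n-1)/42$, and the lemma does not need it.

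A minor point: your ``$+1$'' needs some plane outside $\mathcal I$. This holds for $s\ge4$, and in any case $\tau(J)\ge|\mathcal I|$ is all that is required.
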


\begin{proof}
    Write $\alpha(J)$ for the {\it independence number} of $J$, that is, the size of a maximum independent set in $J$.
    Color the vertices of a maximum independent set of $J$ with distinct colors, and color all remaining vertices with one additional color (if any remain).
    Clearly, this vertex coloring contains no triangles whose vertices receive distinct colors.
    Therefore, it is a good coloring, so $\tau(J)\ge \alpha(J)$.
    It remains to estimate $\alpha(J)$.

    The following result is an immediate specialization of a much more general theorem of Blackburn and Etzion on constant-dimension codes; we state only the form needed here.
    \begin{theorem}[Blackburn and Etzion, Theorem 1 in \cite{blackburn}]\label{thm:BE-special}
        As $s\to\infty$, we have
        \[
            \alpha(J_2(s,3))=\frac{(2^s-1)^2}{42}+o((2^s-1)^2).
        \]
    \end{theorem}
    This completes the proof.
\end{proof}

\subsection{Proof of the upper bound}

Throughout this subsection, all hypergraphs are complete $3$-graphs unless otherwise specified.
We prove the following upper bound, which improves the last assertion of \cref{thm:F4}.

For an edge-colored graph $Q$, let $\rho(Q)$ denote the number of colors whose color class contains a monochromatic triangle.
We shall use the following strengthening of the Erd\H{o}s-Simonovits-S\'os theorem for Gallai colorings.

\begin{lemma}\label{lem:gallai-rho}
    Let $Q$ be an edge-colored complete graph with no rainbow triangle. Then
    \begin{equation}\label{eq:gallai-rho}
        c(Q)+\rho(Q)\le |V(Q)|-1.
    \end{equation}
\end{lemma}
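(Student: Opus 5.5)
The plan is to prove \eqref{eq:gallai-rho} by strong induction on $m\coloneqq|V(Q)|$, using Gallai's structure theorem for colorings of complete graphs with no rainbow triangle. The base cases are immediate. For $m=1$ we have $c(Q)=\rho(Q)=0$. For $m=2$ we have $c(Q)=1$ and $\rho(Q)=0$.

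For $m\ge2$ I will invoke Gallai's theorem~\cite{gallai}. It says that $V(Q)$ has a partition $V_1\cup\cdots\cup V_k$ with $k\ge2$ such that all edges between any two parts $V_i,V_j$ get one color, and at most two colors occur on edges between parts. I will choose such a partition with $k$ minimal. Two consequences follow.
\begin{itemize}
\item If the reduced complete graph $R$ on $[k]$ is monochromatic, merging parts shows we may take $k=2$.
\item If $R$ uses exactly two colors, minimality forces both color classes of $R$ to be connected and spanning. Otherwise the components of one color class could be merged into a coarser partition.
\end{itemize}
Since a $2$-coloring of $K_3$ always has a disconnected color class, the second situation forces $k\ge4$.

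Next I account for the colors. Let $Q_i\coloneqq Q[V_i]$. Every color of $Q$ either appears inside some $Q_i$ or is a cross color not appearing in any part; call the latter \emph{new}. Also, every color with a monochromatic triangle either already has one inside some $Q_i$, or gets one only by using cross edges. Hence
\[
c(Q)+\rho(Q)\le\sum_{i=1}^k\bigl(c(Q_i)+\rho(Q_i)\bigr)+\Delta\le m-k+\Delta ,
\]
where the last step uses the induction hypothesis. Here $\Delta$ is the extra contribution of the at most two cross colors. It suffices to show $\Delta\le k-1$.

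A cross color $\gamma$ contributes to $\Delta$ in two ways: at most $1$ to $c$ if it is new, and at most $1$ to $\rho$ if it gains a monochromatic triangle. Consider a monochromatic triangle of color $\gamma$ that uses cross edges. Either it meets three distinct parts, giving a $\gamma$-triangle in $R$, or it has two vertices in one part $V_i$, joined by an edge of color $\gamma$ inside $Q_i$. In the latter case $\gamma$ is not new. So $\gamma$ contributes $2$ only if $R$ has a $\gamma$-triangle; otherwise it contributes at most $1$.

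Now I split into cases.
\begin{itemize}
\item If $k=2$, there is one cross color and $R$ is a single edge, so $\Delta\le1=k-1$.
\item If $k=4$ with two connected color classes, both classes are spanning trees on $4$ vertices. Since $R$ has $6$ edges, each class has $3$ edges, so $R$ is $P_4$ together with its complement $P_4$. This is triangle-free in each color, so $\Delta\le2\le3$.
\item If $k\ge5$, trivially $\Delta\le4\le k-1$.
\end{itemize}

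I expect the main obstacle to be the small-$k$ bookkeeping: making sure the minimal partition really reduces to the two clean cases above, and that the $\rho$-increment of a non-new color is not double counted. Getting the case $k=4$ right via the $P_4$/complement structure is the delicate point.
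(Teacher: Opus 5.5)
Your proof is correct and follows essentially the same route as the paper: induction on a Gallai partition, with the extra contribution of the at most two cross colours bounded by $k-1$. The key observation is the same, namely that a new cross colour can gain a monochromatic triangle only through a triangle in the reduced graph. The one difference is that you take a Gallai partition with the fewest parts, which excludes $k=3$ and makes both colour classes of the reduced $K_4$ spanning trees. The paper instead works with an arbitrary partition, checks $t=3$ directly, and handles $t=4$ by noting that two triangles on a $4$-set share an edge.
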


\begin{proof}
    We prove the lemma by induction on $|V(Q)|$.
    The assertion is trivial when $|V(Q)|\le2$.
    Let $|V(Q)|\ge3$.

    By Gallai's theorem \cite{gallai}, there is a non-trivial partition $V(Q)=V_1\cup\cdots\cup V_t$ with $t\ge2$ such that every pair of parts is joined by edges of a single color, and all edges between distinct parts use at most two colors in total.

    For each $i\in[t]$, let $Q_i=Q[V_i]$.  By induction,
    \[
        c(Q_i)+\rho(Q_i)\le |V_i|-1.
    \]

    Let $X$ be the set of colors appearing on edges between distinct parts.
    Then $|X|\le2$.  Let
    \[
        S=\bigcup_{i=1}^t \{\text{colors appearing in }Q_i\}
    \]
    and let
    \[
        R=\bigcup_{i=1}^t
        \{\text{colors whose color class contains a monochromatic triangle in }Q_i\}.
    \]
    Thus
    \[
        |S|\le\sum_{i=1}^t c(Q_i),
        \qquad
        |R|\le\sum_{i=1}^t \rho(Q_i).
    \]

    Put $q=|X\setminus S|$ and
    \[
        r=
        \left|
        \left\{
        \gamma\in X\setminus R:
        \text{ the $\gamma$-colored edges of $Q$ contain a monochromatic triangle}
        \right\}
        \right|.
    \]
    Then
    \begin{equation}\label{eq:gallai-rho-reduction}
        c(Q)+\rho(Q)
        \le
        \sum_{i=1}^t\bigl(c(Q_i)+\rho(Q_i)\bigr)+q+r.
    \end{equation}
    It remains to prove
    \begin{equation}\label{eq:q-plus-r}
        q+r\le t-1.
    \end{equation}

    Consider the reduced complete graph on vertex set $[t]$, where the edge
    $ij$ receives the unique color used between $V_i$ and $V_j$.

    If $t=2$, then there is only one cross-color.  If this color is new, it
    contributes one to $q$ and cannot contribute to $r$, since it appears only
    between the two parts.  If it is not new, it contributes nothing to $q$
    and at most one to $r$.  Hence $q+r\le1=t-1$.

    Suppose $t=3$.  If $q=0$, then $q+r\le |X|\le2=t-1$.  If $q=2$, then both
    cross-colors are new.  Since the reduced graph is a triangle using two
    colors, neither of these new colors forms a monochromatic triangle in the
    reduced graph; and because they are new, neither can form a monochromatic
    triangle using two vertices in one part.  Thus $r=0$ and $q+r=2$.
    It remains to consider $q=1$.  Let $\gamma$ be the unique new cross-color.
    If $|X|=1$, then the reduced triangle is monochromatic of color $\gamma$,
    so $\gamma$ may contribute to both $q$ and $r$, but then $q+r\le2$.  If
    $|X|=2$, then $\gamma$ cannot form a monochromatic triangle in the reduced
    triangle, and since it is new, it cannot form a monochromatic triangle
    using two vertices in one part.  Hence only the other cross-color can
    contribute to $r$, so again $q+r\le2$.  Therefore \eqref{eq:q-plus-r}
    holds for $t=3$.

    Suppose $t=4$.  Since $|X|\le2$, we have $q\le2$ and $r\le2$.  Thus the
    only possible violation of \eqref{eq:q-plus-r} would be $q+r=4$.  This
    would mean that both cross-colors are new and both are counted by
    $\rho(Q)$.  Since they are new, each would have to span a monochromatic
    triangle in the reduced $K_4$.  But two triples of a $4$-set always share
    an edge, so two distinct colors cannot both span monochromatic triangles
    in the same reduced $K_4$.  Hence $q+r\le3=t-1$.

    If $t\ge5$, then
    \[
        q+r\le |X|+|X|\le4\le t-1.
    \]

    Combining \eqref{eq:gallai-rho-reduction}, the induction hypothesis, and
    \eqref{eq:q-plus-r}, we obtain
    \[
        c(Q)+\rho(Q)
        \le
        \sum_{i=1}^t(|V_i|-1)+(t-1)
        =
        |V(Q)|-1.
    \]
\end{proof}

We now prove the desired upper bound.

\begin{theorem}\label{thm:asymptotic-all-n}
    For every $n\ge4$,
    \[
        \ar(n,F_4)\le \frac{5n^2-8n}{21}.
    \]
\end{theorem}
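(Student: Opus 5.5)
The plan is a weighted double count over the links $P(x)$, $x\in V$, defined at the start of this section. Let $H$ be a rainbow $F_4$-free coloring of $K_n^{(3)}$. Every link $P(x)$ is a Gallai coloring of $K_{n-1}$, so \cref{lem:gallai-rho} gives $c(P(x))+\rho(P(x))\le n-2$. Summing over $x$ yields
\[
\sum_{\gamma} w(\gamma)\le n(n-2),
\]
where the weight of a color $\gamma$ is
\[
w(\gamma)=|V(\gamma)|+\#\{x:\ \text{$\gamma$ spans a monochromatic triangle in }P(x)\},
\]
and $V(\gamma)$ is the set of vertices covered by the $\gamma$-colored triples. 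The theorem then reduces to showing that colors carry large weight on average. I split the colors into \emph{singleton colors}, whose class is a single triple (let $s$ be their number), and the remaining colors (let $t$ be their number). A singleton color has $|V(\gamma)|=3$ and no $\rho$-contribution. A non-singleton color has $|V(\gamma)|\ge4$.

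Next I record the local structure forced by rainbow $F_4$-freeness: on every $4$-set $Q$, the four triples of $\binom{Q}{3}$ use at most two colors. Suppose $abc$ has a singleton color. Then for every $d\ne a,b,c$, the triples $abd,acd,bcd$ share one common color $\gamma_d$, which is not singleton. Two consequences follow.

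First, two singleton triples cannot share a pair. If $abc$ and $abd$ were both singletons, then $acd$ would have to carry one of their two colors, which is impossible. Hence the singleton triples form a $\PSTS(n)$, and $s\le m(n)\le n(n-1)/6$.

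Second, in the link $P(d)$ the triangle $abc$ is monochromatic in $\gamma_d$. So every singleton triple $abc$ and every outside vertex $d$ produce a $\rho$-contribution to the color $\gamma_d$ at $d$.

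The key step is a discharging argument that converts these forced triangles into extra weight on non-singleton colors. The target is an inequality of the shape
\[
3s+7t\le n(n-2)+O(n),
\]
or some equivalent convex combination. Combined with $s\le n(n-1)/6$, such an inequality gives $s+t\le \tfrac{5}{21}n^2-O(n)$. The arithmetic is that the extreme point $s=n^2/6$, $t=n^2/14$ satisfies $3/6+7/14=1$. Tracking the linear terms exactly should then give $(5n^2-8n)/21$.

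To obtain the extra weight of about $3$ per non-singleton color, I would argue as follows. Fix a non-singleton color $\gamma$. Either $|V(\gamma)|$ is already large, or the $\gamma$-class is confined to a few vertices. In the confined case, I use that every pair $\{a,b\}$ lies in at most one singleton triple, and that the $n-2$ triples through $ab$ pairwise satisfy the two-colors-per-$4$-set rule. From this I want to show that $\gamma$ either covers many vertices or is forced into monochromatic link triangles at several vertices.

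An alternative is to charge each singleton $abc$'s forced triangles $\{(d,\gamma_d)\}$ against the slack $n-2-c(P(d))-\rho(P(d))$. In that version the different singletons sharing a vertex $d$ and the same $\gamma_d$ are counted only once, which is exactly why I need care.

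I expect this redistribution to be the main obstacle. Many singleton triples can force the same color $\gamma_d$ in the same link, so the naive count $s(n-3)$ of forced triangles collapses. I would first try to bound, for fixed $d$ and $\gamma$, how many singleton triples can force $\gamma$ at $d$. Those triples form a linear family inside the $\gamma$-colored part of $P(d)$, whose structure is controlled by Gallai's decomposition as used in the proof of \cref{lem:gallai-rho}. I would then optimize the resulting linear program in $s$ and $t$.
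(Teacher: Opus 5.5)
Your setup matches the paper's starting point, and it is correct:
\begin{itemize}
\item summing $c+\rho\le n-2$ over all links gives $\sum_\gamma w(\gamma)\le n(n-2)$;
\item the singleton triples form a PSTS;
\item a singleton triple forces the other three triples on any $4$-set containing it to share a color.
\end{itemize}
The gap is your central target. The inequality $3s+7t\le n(n-2)+O(n)$, i.e.\ non-singleton colors having weight about $7$ on average, is false.

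\textbf{Counterexample.} Take $n\equiv 1,4\pmod{12}$ and an $S(2,4,n)$, a family of $4$-sets with every pair in exactly one block; for $n=13$ the projective plane of order $3$ works. For each block $Q=\{a,b,c,d\}$, color $abc,abd$ with a new color $\gamma_Q$ and $acd,bcd$ with a new color $\delta_Q$. Give every triple not contained in a block one further color $\omega$.

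Suppose two triples of a $4$-set both lie in blocks. They share a pair, so they lie in the same block, and the $4$-set is that block. Hence a $4$-set that is not a block contains at most one block-triple. Every $4$-set therefore sees at most two colors, so the coloring is rainbow $F_4$-free.

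Here $s=0$ and $t=n(n-1)/6+1$, so $3s+7t=\tfrac76n^2+O(n)$, far above $n(n-2)+O(n)$. Each $\gamma_Q$ has support $4$ and spans no monochromatic triangle in any link, so its weight is exactly $4$. No discharging can raise it to $7$. Your per-color dichotomy (``covers many vertices or is forced into monochromatic link triangles'') fails because forcing only acts across pairs covered by singleton triples, and here there are none. Your alternative of charging the singletons' forced triangles gives nothing for the same reason.

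\textbf{Missing idea: couple $s$ with the uncovered pairs.} The paper introduces the leave graph $L(F)$ of the singleton PSTS $F$, with $\ell$ edges, so that $3s+\ell=\binom n2$.
\begin{itemize}
\item It attributes forced units to the non-singleton color, not to the singleton. If $xyz$ has color $\alpha$ and $xyd\in F$, then $d\in V_\alpha$, and $xy,xd,yd$ form an $\alpha$-monochromatic triangle in the link of $z$. Each covered pair of a fixed witness triple $T_\alpha$ thus gives two units. These carry the color $\alpha$, so they never collapse the way your singleton-based count does.
\item A color all of whose triples use only covered pairs then has support at least $7$.
\item Otherwise the deficit $2h_\alpha-2$ is charged to the $h_\alpha$ leave edges of $T_\alpha$.
\item A tournament argument among the colors whose witnesses share a leave edge bounds the net charge per leave edge by $4/3$.
\end{itemize}
This yields $3s+7t\le n(n-2)+\tfrac43\ell$, which is your target only when $\ell=0$. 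Adding $4s$ and using $3s+\ell=\binom n2$ gives $7c\le n(n-2)+\tfrac43\binom n2$. Your separate bound $s\le n(n-1)/6$ is just $\ell\ge0$. It discards exactly the $\tfrac43\ell$ slack that pays for low-weight colors such as $\gamma_Q$ above.
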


\begin{proof}
    Let $G$ be a rainbow $F_4$-free edge-coloring of $K_n^{(3)}$.

    The proof has two main steps.  We first derive a global upper bound by
    applying \cref{lem:gallai-rho}.
    We then derive a complementary lower bound by isolating the singleton-colored triples and using the structure they impose on the remaining colors.
    Comparing these two bounds yields the desired estimate.

    For every color $\lambda$ in $G$, define its support by
    \[
        V_\lambda=\{x\in V(G)\colon \text{there is an edge } e \text{ with } x\in e \text{ and } c_G(e)=\lambda\}.
    \]
    Put
    \[
        I=\sum_\lambda |V_\lambda|,
    \]
    where the summation ranges over all colors in $G$.
    Thus $I$ counts vertex-color incidences: a color with support size $r$ contributes $r$, not merely one.
    This weighting is useful because a color with large support already gives many units toward the final count.

    For every vertex $v\in V(G)$, let $L_v$ be the edge-colored link graph on vertex set $V(G)\setminus\{v\}$, where
    \[
        c_{L_v}(xy)=c_G(vxy)
        \qquad\text{for all distinct }x,y\in V(G)\setminus\{v\}.
    \]
    A rainbow triangle in $L_v$ would correspond exactly to the three triples of a rainbow $F_4$ containing $v$.
    Since $G$ is rainbow $F_4$-free, $L_v$ contains no rainbow triangle.
    By \cref{lem:gallai-rho},
    \[
        c(L_v)+\rho(L_v)\le n-2.
    \]
    Define
    \[
        \rho=\sum_{v\in V(G)}\rho(L_v).
    \]
    Then
    \begin{equation}\label{eq:I-rho-upper}
        I+\rho
        =
        \sum_{v\in V(G)}c(L_v)+\sum_{v\in V(G)}\rho(L_v)
        \le n(n-2).
    \end{equation}

    Let $F$ be the $3$-graph consisting of all singleton-colored triples in $G$, that is,
    \[
        E(F)=\{e\in E(G)\colon c_G(e)\text{ is used exactly once in }G\}.
    \]
    We first note that $F$ is a PSTS.
    Indeed, suppose that two triples of $F$ share a pair, say $abc,abd\in E(F)$.
    Let their colors be $\delta_1$ and $\delta_2$.  Since both colors are used
    exactly once and the two triples are distinct, $\delta_1\ne\delta_2$.
    The triple $acd$ cannot have color $\delta_1$ or $\delta_2$, since either
    choice would repeat a singleton color.
    If $c_G(acd)\notin\{\delta_1,\delta_2\}$, then $abc,abd,acd$ form a rainbow copy of $F_4$, a contradiction.
    Thus no two triples of
    $F$ share a pair, and $F$ is a PSTS.

    We shall use the forcing observation repeatedly.
    It is the basic local mechanism of the proof: a singleton triple sharing a pair with a non-singleton triple forces the two remaining triples on the same $4$-set to take the non-singleton color.

    \begin{claim}\label{clm:forcing-new}
        Let $xyz$ be a triple of a non-unique color $\alpha$.  Suppose that
        $xyd\in E(F)$.  Then
        \[
            c_G(xzd)=c_G(yzd)=\alpha.
        \]
    \end{claim}

    \begin{proof}
        Let $\delta=c_G(xyd)$.  The color $\delta$ is used exactly once.
        Consider $xzd$.  It cannot have color $\delta$, because this would
        repeat the singleton color of $xyd$.  If $c_G(xzd)\notin
            \{\alpha,\delta\}$, then the three triples $xyz,xyd,xzd$ form a rainbow copy of $F_4$ on the $4$-set $\{x,y,z,d\}$, impossible.
        Hence $c_G(xzd)=\alpha$.  The proof for $yzd$ is identical.
    \end{proof}

    Write $s=e(F)$.
    Let $L(F)$ be the {\it leave graph} of $F$, namely the graph on $V(G)$ whose
    edges are the pairs not contained in any triple of $F$, and write
    \[
        \ell=e(L(F)).
    \]
    The leave graph records precisely the pairs where the forcing claim cannot be initiated by a singleton triple.
    Since $F$ is a partial Steiner triple system, every edge of $F$ covers three distinct vertex-pairs and no pair is covered twice, so
    \begin{equation}\label{eq:leave-upper-new}
        3s+\ell=\binom{n}{2}.
    \end{equation}

    We now turn to the lower bound. We use the structure of $F$ to analyze the non-unique colors according to how they interact with the pairs covered by \(F\).  This will eventually produce a lower bound for \(I+\rho\).

    We partition the non-unique colors into good colors and bad colors.
    A non-unique color $\alpha$ is called {\it good} if every $\alpha$-colored triple
    has all its three pairs covered by $F$.  Otherwise, $\alpha$ is called {\it bad}.

    We first show that every good color has support size at least seven.
    Let $\alpha$ be a good color and take an $\alpha$-colored triple $abc$.
    Since $\alpha$ is good, there exist vertices $d,e,f$ such that $abd,ace,bcf$ are edges of $F$.
    By \cref{clm:forcing-new}, applied to the triples
    $abc$ and $abd,ace,bcf$, respectively, the six triples
    \[
        acd,\ bcd,\ abe,\ bce,\ abf,\ acf
    \]
    all have color $\alpha$.
    Since $\alpha$ is good and $acd$ is $\alpha$-colored, the pair $cd$ is
    covered by $F$; say
    \[
        cdg\in E(F).
    \]
    Applying \cref{clm:forcing-new} to the $\alpha$-colored triple $acd$ and
    the singleton triple $cdg$ shows that $g\in V_\alpha$.
    The vertices $a,b,c,d,e,f,g$ are distinct.  Indeed, the vertices
    $d,e,f$ lie outside $\{a,b,c\}$, since $abc$ has the non-unique color
    $\alpha$ and hence is not in $F$.  The vertices $d,e,f$ are pairwise
    distinct; otherwise two of the singleton triples $abd,ace,bcf$ would
    share a pair.  Finally, $g\notin\{a,b\}$ because $acd$ and $bcd$ have
    color $\alpha$ and hence are not in $F$, while $g\notin\{e,f\}$ because
    $cdg$ would then share a pair with $ace$ or $bcf$; also
    $g\notin\{c,d\}$ because $cdg$ is a triple.  Therefore
    \begin{equation}\label{eq:good-support-new}
        |V_\alpha|\ge 7
    \end{equation}
    for every good color $\alpha$.

    We now deal with the bad colors.  Let $\cB$ be the set of bad colors.
    Recall that a non-unique color $\alpha$ is bad precisely when there exists
    an $\alpha$-colored triple containing at least one edge of the leave graph
    $L(F)$.  For every $\alpha\in\cB$, fix once and for all one such
    $\alpha$-colored triple, and denote it by $T_\alpha$.
    Thus
    \[
        c_G(T_\alpha)=\alpha
        \qquad\text{and}\qquad
        \binom{T_\alpha}{2}\cap E(L(F))\ne\emptyset.
    \]
    We call $T_\alpha$ the fixed {\it witness} triple of the bad color $\alpha$.
    Define
    \[
        h_\alpha=
        \left|\binom{T_\alpha}{2}\cap E(L(F))\right|.
    \]
    Since $T_\alpha$ is a triple containing at least one leave-edge, we have $1\le h_\alpha\le3$.

    For every leave-edge $e\in E(L(F))$, define
    \[
        K_e=\{\alpha\in\cB: e\subseteq T_\alpha\},
        \qquad
        k_e=|K_e|.
    \]
    Thus $K_e$ consists of the bad colors whose fixed witness triple contains
    the leave-edge $e$.  The families $K_e$ are not meant to form a partition
    of $\cB$: a bad color may belong to more than one of them.

    In the next claims, a unit counted by \(I\) means an ordered pair
    \((\lambda,x)\) with \(x\in V_\lambda\), and a unit counted by \(\rho\)
    means an ordered pair \((z,\lambda)\) such that color \(\lambda\)
    contains a monochromatic triangle in the link graph \(L_z\).

    We first estimate the {\it base contribution} of a bad color, namely the
    contribution coming from its fixed witness triple and from the pairs of
    that triple that are covered by \(F\).

    \begin{claim}\label{clm:bad-base-new}
        Let $\alpha\in\cB$, and write its fixed witness triple as $T_\alpha=xyz$.
        The base contribution of
        $\alpha$ is at least
        \begin{equation}\label{eq:bad-base-new}
            3+2(3-h_\alpha)=9-2h_\alpha
        \end{equation}
        units.
    \end{claim}

    \begin{proof}
        The three vertices $x,y,z$ of $T_\alpha$ are all in $V_\alpha$, so
        they give the three $I$-units
        \[
            (\alpha,x),\qquad (\alpha,y),\qquad (\alpha,z).
        \]

        Among the three pairs of $T_\alpha$, exactly $h_\alpha$ are
        leave-edges.  Hence exactly $3-h_\alpha$ pairs of $T_\alpha$ are
        covered by $F$.

        Let $p$ be one covered pair of $T_\alpha=xyz$, say $p=\{x,y\}$, so that $z$ is the unique vertex of $T_\alpha$ not contained in $p$.
        Since $p$ is covered by $F$, there is a vertex $d_p$ such that $xyd_p\in E(F)$.
        The vertex $d_p$ is not in $T_\alpha$, because otherwise
        $xyd_p=T_\alpha$, which would mean that the non-unique color $\alpha$
        is used on a singleton triple.

        By \cref{clm:forcing-new}, applied to the $\alpha$-colored triple
        $xyz$ and the singleton triple $xyd_p$, we have
        \[
            c_G(xzd_p)=c_G(yzd_p)=\alpha.
        \]
        Therefore $d_p\in V_\alpha$, giving the additional $I$-unit $(\alpha,d_p)$.

        Moreover, in the link graph $L_z$, the three graph-edges $xy, xd_p,yd_p$ all have color $\alpha$, since they correspond to the triples
        $xyz,xzd_p,yzd_p$.
        Hence color $\alpha$ contains a monochromatic
        triangle in $L_z$, giving the $\rho$-unit $(z,\alpha)$.

        These units are distinct as $p$ ranges over the covered pairs of
        $T_\alpha$.  For the $I$-units, if two distinct covered pairs
        $p,q\subseteq T_\alpha$ had $d_p=d_q$, then the two singleton triples $p\cup\{d_p\}$ and $q\cup\{d_q\}$ would share a pair, contradicting that $F$ is a partial Steiner triple
        system.  Also, each $d_p$ lies outside $T_\alpha$, so these units are
        distinct from $(\alpha,x),(\alpha,y),(\alpha,z)$.  For the
        $\rho$-units, distinct pairs of the triple $T_\alpha$ have distinct
        remaining vertices: if $T_\alpha=xyz$, then the pairs $xy,xz,yz$
        correspond respectively to the link vertices $z,y,x$.

        Hence each covered pair of $T_\alpha$ gives two new units, one in $I$
        and one in $\rho$.  Since there are $3-h_\alpha$ covered pairs, the
        base contribution is at least
        \[
            3+2(3-h_\alpha)=9-2h_\alpha.
        \]
    \end{proof}

    We now estimate the additional contribution coming from the leave-edges in the fixed witness triples.  Fix a leave-edge
    \[
        e=uv\in E(L(F)).
    \]
    For every $\alpha\in K_e$, recall that $e\subseteq T_\alpha$.  Since
    $T_\alpha$ is a triple, there is a unique vertex $x_{\alpha,e}$ such that
    \[
        T_\alpha=e\cup\{x_{\alpha,e}\}=uvx_{\alpha,e}.
    \]
    If $\alpha,\beta\in K_e$ are distinct, then
    $x_{\alpha,e}\ne x_{\beta,e}$; otherwise the same triple would have the
    two distinct colors $\alpha$ and $\beta$.

    Consider two distinct colors $\alpha,\beta\in K_e$.  On the $4$-set $\{u,v,x_{\alpha,e},x_{\beta,e}\}$, the triples $uvx_{\alpha,e}$ and $uvx_{\beta,e}$ have colors $\alpha$ and
    $\beta$, respectively.
    Hence each of the two remaining triples $ux_{\alpha,e}x_{\beta,e}$, $vx_{\alpha,e}x_{\beta,e}$ has color in $\{\alpha,\beta\}$; otherwise, together with
    $uvx_{\alpha,e}$ and $uvx_{\beta,e}$, it would form a rainbow copy of
    $F_4$.

    For $t\in\{u,v\}$ and distinct $\alpha,\beta\in K_e$, say that
    $\alpha$ wins over $\beta$ at $t$ with respect to $e$ if
    \[
        c_G(tx_{\alpha,e}x_{\beta,e})=\alpha.
    \]
    For every unordered pair $\{\alpha,\beta\}\subseteq K_e$ and every
    $t\in\{u,v\}$, exactly one of $\alpha$ and $\beta$ wins over the other at
    $t$ with respect to $e$.

    For $\alpha\in K_e$ and $t\in\{u,v\}$, define
    \[
        W_t^e(\alpha)
        =
        \{\beta\in K_e\setminus\{\alpha\}:
        \alpha \text{ wins over } \beta \text{ at } t \text{ with respect to } e\}.
    \]
    For every leave-edge $e=uv$, define
    \[
        \mathcal Q_e^I
        =
        \{(\alpha,x_{\beta,e}):
        \alpha\in K_e,\,
        \beta\in W_u^e(\alpha)\cup W_v^e(\alpha)\}
    \]
    and
    \[
        \mathcal Q_e^\rho
        =
        \{(x_{\alpha,e},\alpha):
        \alpha\in K_e,\,
        W_u^e(\alpha)\cap W_v^e(\alpha)\neq\emptyset\}.
    \]
    We first claim that every element of $\mathcal Q_e^I$ is counted by $I$, and every element of $\mathcal Q_e^\rho$ is counted by $\rho$.
    In fact, if $\beta\in W_u^e(\alpha)$, then $c_G(ux_{\alpha,e}x_{\beta,e})=\alpha$, so $x_{\beta,e}\in V_\alpha$ and $(\alpha,x_{\beta,e})$ is counted by $I$.
    The same argument applies if $\beta\in W_v^e(\alpha)$, using
    the triple $vx_{\alpha,e}x_{\beta,e}$.  Thus every element of
    $\mathcal Q_e^I$ is counted by $I$.
    Write $M_e$ for $|\mathcal Q_e^I|+|\mathcal Q_e^\rho|$.
    Now we count $M_e$ in another way.
    \begin{claim}\label{clm:bad-winner-contribution-new}
        We have
        \begin{equation}\label{eq:Me-def}
            M_e=
            \sum_{\alpha\in K_e}
            \left(
            |W_u^e(\alpha)\cup W_v^e(\alpha)|
            +
            \mathbf 1_{W_u^e(\alpha)\cap W_v^e(\alpha)\neq\emptyset}
            \right),
        \end{equation}
        and
        \begin{equation}\label{eq:Me-lower}
            M_e\ge
            \begin{cases}
                0,                    & k_e=0,   \\
                \binom{k_e}{2}+k_e-1, & k_e\ge1.
            \end{cases}
        \end{equation}
    \end{claim}
    \begin{proof}
        For fixed $\alpha$, the vertices $x_{\beta,e}$ with
        $\beta\in W_u^e(\alpha)\cup W_v^e(\alpha)$ are distinct, since the
        triples $uvx_{\beta,e}$ have distinct colors.  For different values
        of $\alpha$, the first coordinate of the ordered pair
        $(\alpha,x_{\beta,e})$ is different.  Hence
        \[
            |\mathcal Q_e^I|
            =
            \sum_{\alpha\in K_e}|W_u^e(\alpha)\cup W_v^e(\alpha)|.
        \]

        Now suppose that $W_u^e(\alpha)\cap W_v^e(\alpha)\neq\emptyset$.
        Choose $\beta\in W_u^e(\alpha)\cap W_v^e(\alpha)$.  Then
        \[
            c_G(ux_{\alpha,e}x_{\beta,e})
            =
            c_G(vx_{\alpha,e}x_{\beta,e})
            =
            \alpha.
        \]
        Together with $c_G(uvx_{\alpha,e})=\alpha$,
        this shows that in the link graph $L_{x_{\alpha,e}}$ the three edges $uv, ux_{\beta,e}, vx_{\beta,e}$ form a monochromatic triangle of color $\alpha$.
        Thus $(x_{\alpha,e},\alpha)$ is counted by $\rho$.
        This proves that every element of $\mathcal Q_e^\rho$ is counted by $\rho$.
        The elements of $\mathcal Q_e^\rho$ are distinct because their color coordinates
        $\alpha$ are distinct.  Therefore \eqref{eq:Me-def} holds.

        It remains to prove \eqref{eq:Me-lower}.  If $k_e=0$, then
        $M_e=0$.
        Assume $k_e\ge1$. At the endpoint $u$, every unordered pair
        $\{\alpha,\beta\}\subseteq K_e$ has exactly one winner.\footnote{Readers may consider that the ``win'' relation defines a tournament.}  Hence
        \[
            \sum_{\alpha\in K_e}|W_u^e(\alpha)|
            =
            \binom{k_e}{2}.
        \]
        This is the baseline contribution.

        At the endpoint $v$, at most one color $\alpha\in K_e$ can satisfy
        $W_v^e(\alpha)=\emptyset$.  Indeed, if two distinct colors
        $\alpha,\beta\in K_e$ both had empty $W_v^e$-sets, then one of them
        would have to win over the other at $v$, contradicting the emptiness
        of its $W_v^e$-set.  Therefore at least $k_e-1$ colors
        $\alpha\in K_e$ satisfy $W_v^e(\alpha)\neq\emptyset$.
        Fix such an $\alpha$.  If
        \[
            W_u^e(\alpha)\cap W_v^e(\alpha)\neq\emptyset,
        \]
        then the indicator term contributes one additional unit beyond
        $|W_u^e(\alpha)|$.  If
        \[
            W_u^e(\alpha)\cap W_v^e(\alpha)=\emptyset,
        \]
        then
        \[
            |W_u^e(\alpha)\cup W_v^e(\alpha)|
            =
            |W_u^e(\alpha)|+|W_v^e(\alpha)|
            \ge |W_u^e(\alpha)|+1.
        \]
        Thus every $\alpha$ with $W_v^e(\alpha)\neq\emptyset$ contributes at
        least one unit beyond the baseline $|W_u^e(\alpha)|$.

        Since this happens for at least $k_e-1$ colors, we obtain
        \[
            M_e
            \ge
            \binom{k_e}{2}+k_e-1.
        \]
    \end{proof}

    The next claim checks that the base units and the leave-edge units are disjoint.

    \begin{claim}\label{clm:bad-disjoint-new}
        The units counted in the base contribution
        \eqref{eq:bad-base-new} and the units in the families
        $\mathcal Q_e^I,\mathcal Q_e^\rho$, over all leave-edges
        $e\in E(L(F))$, may be counted together in $I+\rho$ without
        repetition.
    \end{claim}

    \begin{proof}
        We check repetitions within $I$ and within $\rho$ separately.  Units
        with different color coordinate are automatically distinct.  Hence it
        is enough to fix one bad color $\alpha$ and prove that no unit
        assigned to this fixed $\alpha$ is counted twice.

        Write $T_\alpha=abc$.
        First consider the units counted by $I$.  The base $I$-units are
        \[
            (\alpha,a),\qquad (\alpha,b),\qquad (\alpha,c),
        \]
        together with the units $(\alpha,d_p)$ coming from covered pairs
        $p\subseteq T_\alpha$, where
        \[
            p\cup\{d_p\}\in E(F).
        \]
        For every covered pair $p$, the vertex $d_p$ lies outside
        $T_\alpha$; otherwise $T_\alpha$ itself would be a singleton triple.
        If $p\ne q$ are two covered pairs of $T_\alpha$, then
        $d_p\ne d_q$, because otherwise the two singleton triples $p\cup\{d_p\}$ and $q\cup\{d_q\}$ would share a pair, contradicting that $F$ is a partial Steiner triple
        system.  Thus the base $I$-units assigned to $\alpha$ are distinct.

        Now consider an $I$-unit assigned to $\alpha$ from some leave-edge
        $e\subseteq T_\alpha$.  Write
        \[
            e=uv,
            \qquad
            T_\alpha=e\cup\{x_{\alpha,e}\}.
        \]
        Such a unit has the form $(\alpha,x_{\beta,e})$, where $\beta\in K_e\setminus\{\alpha\}$ and $T_\beta=e\cup\{x_{\beta,e}\}$.

        First, $x_{\beta,e}\notin T_\alpha$.  Indeed, if
        $x_{\beta,e}\in T_\alpha$, then, since $e\subseteq T_\alpha$, we
        would have $T_\beta=T_\alpha$,
        forcing the same triple to have two distinct colors $\beta$ and
        $\alpha$.

        Second, $x_{\beta,e}$ cannot equal any vertex $d_p$ coming from a
        covered pair $p\subseteq T_\alpha$.  After relabelling, suppose
        \[
            T_\alpha=abc,\qquad e=ab,\qquad T_\beta=aby,
        \]
        and suppose that a covered pair of $T_\alpha$, say $ac$, satisfies $acy\in E(F)$.
        Then the three triples $abc,aby,acy$ lie on the same $4$-set and form a copy of $F_4$.
        Their colors are pairwise distinct: $abc$ has color $\alpha$, $aby$ has color $\beta\ne\alpha$, and $acy$ has a singleton color.
        This gives a rainbow copy of $F_4$, impossible.
        The case where the covered pair is $bc$ is identical by relabelling.

        Third, two $I$-units assigned to $\alpha$ from the same leave-edge
        cannot have the same second coordinate.  If
        $x_{\beta,e}=x_{\gamma,e}$ for distinct $\beta,\gamma\in K_e$, then
        \[
            T_\beta=e\cup\{x_{\beta,e}\}
            =
            e\cup\{x_{\gamma,e}\}
            =
            T_\gamma,
        \]
        so the same triple would have two distinct colors.

        Fourth, two $I$-units assigned to $\alpha$ from two distinct
        leave-edges of $T_\alpha$ cannot have the same second coordinate.
        After relabelling, suppose the two leave-edges are $ab$ and $ac$, and
        suppose both produce the same vertex $y$.  Then there exist bad colors
        $\beta,\gamma$ such that
        \[
            T_\beta=aby,
            \qquad
            T_\gamma=acy.
        \]
        The colors $\alpha,\beta,\gamma$ are pairwise distinct: neither
        $\beta$ nor $\gamma$ equals $\alpha$, and $\beta=\gamma$ would force
        the fixed witness triple $T_\beta$ to be both $aby$ and $acy$.
        Hence, $abc,aby,acy$ form a rainbow copy of $F_4$, impossible.

        Therefore all $I$-units assigned to the fixed color $\alpha$ are
        distinct.

        It remains to check the units counted by $\rho$.  For a pair
        $p\subseteq T_\alpha$, let $z_p$ denote the unique vertex of $T_\alpha\setminus p$.
        Thus, if $T_\alpha=abc$, then
        \[
            z_{ab}=c,\qquad z_{ac}=b,\qquad z_{bc}=a.
        \]

        A covered pair $p\subseteq T_\alpha$ contributes the $\rho$-unit $(z_p,\alpha)$,
        because the monochromatic triangle of color $\alpha$ lies in the link graph $L_{z_p}$.
        A contribution from a leave-edge
        $e=p\subseteq T_\alpha$ through the family $\mathcal Q_e^\rho$ also
        has the form $(z_p,\alpha)$,
        since for $p=uv$ and $T_\alpha=uvz_p$, the corresponding
        $\rho$-unit is $(z_p,\alpha)$.

        However, the same pair $p\subseteq T_\alpha$ cannot be both covered by
        $F$ and a leave-edge of $F$.  Hence the base contribution and the
        leave-edge contribution cannot assign the same $\rho$-unit for the
        same pair $p$.  Moreover, distinct pairs of the triple $T_\alpha$ have
        distinct vertices $z_p$.  Therefore all $\rho$-units assigned to
        $\alpha$ are distinct.

        Since the choice of $\alpha$ was arbitrary, and units with different
        color coordinate are distinct, all the counted units may be counted
        together in $I+\rho$ without repetition.
    \end{proof}

    Let $g$ be the number of good colors.  From the contributions of singleton
    colors, good colors, and bad colors, we obtain
    \begin{equation}\label{eq:I-rho-lower-new}
        I+\rho
        \ge
        3s+7g
        +
        \sum_{\alpha\in\cB}(9-2h_\alpha)
        +
        \sum_{e\in E(L(F))}
        \left(\binom{k_e}{2}+k_e-1\right)_+,
    \end{equation}
    where $x_+=\max\{x,0\}$.

    Let $b=|\cB|$ be the number of bad colors.  Then $c(G)=s+g+b$.
    By \eqref{eq:I-rho-lower-new},
    \begin{align}
        7c(G)
         & =
        7s+7g+7b \notag \\
         & \le
        (I+\rho)+4s
        +
        \sum_{\alpha\in\cB}(2h_\alpha-2)
        -
        \sum_{e\in E(L(F))}
        \left(\binom{k_e}{2}+k_e-1\right)_+.
        \label{eq:seven-c-new}
    \end{align}

    We next rewrite the two remaining sums in \eqref{eq:seven-c-new} as a
    single sum over the leave-edges of \(F\).  For a bad color \(\alpha\), the
    fixed witness triple \(T_\alpha\) contains exactly \(h_\alpha\)
    leave-edges.  We therefore distribute the term \(2h_\alpha-2\) equally
    over these \(h_\alpha\) leave-edge incidences.  Thus each incidence
    \(\alpha\in K_e\) receives weight
    \[
        \frac{2h_\alpha-2}{h_\alpha}
        =
        2-\frac2{h_\alpha}.
    \]
    Equivalently, since a fixed bad color \(\alpha\) belongs to \(K_e\) for
    exactly \(h_\alpha\) leave-edges \(e\subseteq T_\alpha\), a double count
    of these weighted incidences gives
    \begin{equation}\label{eq:weighted-incidence-new}
        \sum_{\alpha\in\cB}(2h_\alpha-2)
        =
        \sum_{e\in E(L(F))}
        \sum_{\alpha\in K_e}
        \left(2-\frac2{h_\alpha}\right).
    \end{equation}
    Since \(h_\alpha\in\{1,2,3\}\), every such weight is at most \(4/3\).

    For a fixed leave-edge \(e\), define
    \[
        R_e=
        \sum_{\alpha\in K_e}
        \left(2-\frac2{h_\alpha}\right)
        -
        \left(\binom{k_e}{2}+k_e-1\right)_+,
    \]
    where $x_+=\max\{x,0\}$.
    By \eqref{eq:weighted-incidence-new},
    \begin{equation}
        \sum_{\alpha\in\cB}(2h_\alpha-2)
        -
        \sum_{e\in E(L(F))}
        \left(\binom{k_e}{2}+k_e-1\right)_+
        =
        \sum_{e\in E(L(F))}R_e.
    \end{equation}

    Since $h_\alpha\in\{1,2,3\}$, this weight is at most $4/3$.
    We claim that
    \begin{equation}\label{eq:Re-bound-new}
        R_e\le \frac{4}{3}.
    \end{equation}
    If $k_e=0$, then $R_e=0$.  If $k_e=1$, then $R_e\le4/3$.  If $k_e=2$,
    then
    \[
        R_e\le \frac{4}{3}+\frac{4}{3}-2=\frac23.
    \]
    If $k_e\ge3$, then
    \[
        R_e
        \le
        \frac{4}{3} k_e-\left(\binom{k_e}{2}+k_e-1\right)
        \le \frac{4}{3}.
    \]
    This proves \eqref{eq:Re-bound-new}.

    Combining \eqref{eq:seven-c-new} and \eqref{eq:Re-bound-new}, we get
    \[
        7c(G)\le (I+\rho)+4s+\frac{4}{3}\ell.
    \]
    By \eqref{eq:I-rho-upper} and \eqref{eq:leave-upper-new},
    \[
        7c(G)
        \le
        n(n-2)+4s+\frac{4}{3}\ell
        =
        n(n-2)+\frac{4}{3}\binom n2.
    \]
    Therefore
    \[
        c(G)
        \le
        \frac17\left(n(n-2)+\frac{4}{3}\binom n2\right)
        =
        \frac{5n^2-8n}{21}.
    \]
    Since $G$ was arbitrary, this proves the theorem.
\end{proof}

\section{Concluding and further problems}\label{sec:concluding}
In this paper, we studied anti-Ramsey problems for cancellative configurations in complete $p$-graphs.
Let $\cF^{(p)}$ be the family of $p$-graphs consisting of three edges $A,B,C$ satisfying $A\triangle B\subseteq C$.
For every $p\ge3$ and $n\ge p+1$, we determined $\ar(n,\cF^{(p)})$ and characterized all extremal colorings.
In particular, \cref{thm:rainbow cancellative} shows that every rainbow $\cF^{(p)}$-free coloring of $K_n^{(p)}$ uses at most $1+\floor{n/p}$ colors, and equality holds if and only if after removing $n-p\cdot\floor{n/p}$ vertices,
the coloring contains $\floor{n/p}$ vertex-disjoint edges of pairwise distinct colors together with one additional `background' color on all remaining edges.

For $p=3$, rainbow cancellative is equivalent to forbidding rainbow copies of $F_4$ and $F_5$.
Our second main result, \cref{thm: F5 free}, shows that for $n\ge5$ the rainbow $F_5$-freeness already implies rainbow cancellative, and hence $\ar(n,F_5)=1+\floor{n/3}$.

Motivated by \cref{thm: F5 free}, it is natural to ask whether this theorem can be generalized to higher uniformities.
Let $\cO^{(p)}$ be the subfamily of $\cF^{(p)}$ consisting of three edges $A,B,C$ with $A\triangle B\subseteq C$ and $C\setminus (A\cup B)\neq\emptyset$; note that $\cO^{(3)}=\{F_5\}$.

\begin{problem}\label{pb1}
Is it true that every rainbow $\cO^{(p)}$-free edge-coloring of $K_n^{(p)}$ is rainbow $\cF^{(p)}$-free for all integers $p\ge3$ and $n\ge2p-1$?
\end{problem}

For rainbow $F_4$-free colorings of $K_n^{(3)}$, our constructions give
\[
    \ar(n,F_4)\ge m(n)+1
\]
for all $n\ge4$ via maximum partial Steiner triple systems, and for $n=2^s-1$,
\[
    \ar(n,F_4)\ge m(n)+\frac{n^2}{42}+o(n^2).
\]
Since $m(2^s-1)=(2^s-1)^2/6+O(2^s)$, this yields
\[
    \ar(2^s-1,F_4)\ge \frac{4}{21}(2^s-1)^2+o\bigl((2^s-1)^2\bigr).
\]
On the upper-bound side, we proved that every rainbow \(F_4\)-free
edge-coloring of \(K_n^{(3)}\) uses at most
\[
    \frac{5n^2-8n}{21}
\]
colors. The proof combines a Gallai-type estimate for the link graphs
with a charging argument based on singleton-colored triples and the
distinction between good and bad colors. This gives a quadratic upper
bound with leading coefficient \(5/21\).
Thus determining the correct quadratic coefficient of \(\ar(n,F_4)\),
even along the subsequence \(n=2^s-1\), remains open.
\begin{problem}\label{pb:F4-projective-subsequence}
Let \(n_s=2^s-1\). Does the limit
\[
    \lim_{s\to\infty}\frac{\ar(n_s,F_4)}{n_s^2}
\]
exist? If so, determine its value.
\end{problem}

\end{document}